\newcommand{\Addresses}{{
		\bigskip
		\footnotesize
		
		\textsc{Mathematical Institute, University of Oxford, Oxford, OX2 6GG, UK}\par\nopagebreak
		\textit{E-mail address:} \texttt{gorodetsky@maths.ox.ac.uk}
}}
\author{Ofir Gorodetsky} \title{Smooth integers and de Bruijn's approximation $\Lambda$}
\date{}
\newtheorem{thm}{Theorem}[section]
\newtheorem{lem}[thm]{Lemma}
\newtheorem{proposition}[thm]{Proposition}
\newtheorem{cor}[thm]{Corollary}
\theoremstyle{definition}
\newtheorem{remark}{Remark}
\newcommand{\CC}{\mathbb{C}}
\newcommand{\ZZ}{\mathbb{Z}}
\newcommand{\RR}{\mathbb{R}}
\numberwithin{equation}{section}
\newcommand{\betasad}{\beta}
\begin{document}
\maketitle
\begin{abstract}
This paper is concerned with the relationship of $y$-smooth integers and de Bruijn's approximation $\Lambda(x,y)$. Under the Riemann hypothesis, Saias proved that the count of $y$-smooth integers up to $x$, $\Psi(x,y)$, is asymptotic to $\Lambda(x,y)$ when $y \ge (\log x)^{2+\varepsilon}$. We extend the range to $y \ge (\log x)^{3/2+\varepsilon}$ by introducing a correction factor that takes into account the contributions of zeta zeros and prime powers. 

We use this correction term to uncover a lower order term in the asymptotics of $\Psi(x,y)/\Lambda(x,y)$. The term relates to the error term in the prime number theorem, and implies that large positive (resp.~negative) values of $\sum_{n \le y} \Lambda(n)-y$ lead to large positive (resp.~negative) values of $\Psi(x,y)-\Lambda(x,y)$, and vice versa. 
Under the Linear Independence hypothesis, we show a Chebyshev's bias in $\Psi(x,y)-\Lambda(x,y)$.
\end{abstract}

\section{Introduction}
A positive integer is called $y$-smooth if each of its prime factors does not exceed $y$. We denote the number of $y$-smooth integers not exceeding $x$ by $\Psi(x,y)$. We assume throughout $x \ge y \ge 2$. Let $\rho\colon [0,\infty) \to (0,\infty)$ be the Dickman function, defined as $\rho(t)=1$ for $t \in [0,1]$ and via the delay differential equation $t \rho'(t) =-\rho(t-1)$ for $t>1$. Dickman \cite{dickman1930} showed that
\begin{equation}\label{eq:dickman} 
	\Psi(x,y) \sim x \rho ( \log x/\log y) \qquad (x \to \infty)
\end{equation}
holds when $y \ge x^{\varepsilon}$. For this reason, it is useful to introduce
\[ u:=\log x/\log y.\]
De Bruijn \cite[Eqs.~(1.3), (4.6)]{debruijn1951} showed that
\begin{equation}\label{eq:err} 
\Psi(x,y)- x\rho(u) \sim (1-\gamma)\frac{x\rho(u-1)}{\log x}>0
\end{equation}
when $x \to \infty$ and $(\log x)/2 >\log y > (\log x)^{5/8}$. Here and later $\gamma$ is the Euler--Mascheroni constant. As we see, there is no arithmetic information in the leading behaviour of the error term $\Psi(x,y) - x \rho(u)$, and in particular it does not oscillate. Moreover, the error term is large: the saving \eqref{eq:err} gives over the main term is merely $\asymp \log (u+1)/\log y$ \cite[p.~56]{debruijn1951}.

This begs the question, what is the correct main term for $\Psi(x,y)$ that leads to a small and arithmetically-rich error term?
De Bruijn \cite[Eq.~(2.9)]{debruijn1951} introduced a refinement of $\rho$, often denoted $\lambda_y$:
\[ \lambda_y(u) := \int_{0}^{\infty} \rho\left(u-\frac{\log t}{\log y}\right)d\left(\frac{\lfloor t \rfloor}{t}\right) = \int_{\RR} \rho(u-v)d\left(\frac{\lfloor y^v \rfloor}{y^v}\right)\]
if $y^u \notin \ZZ$; otherwise $\lambda_y(u)=\lambda_y(u+)$ (one has $\lambda_y(u)= \lambda_y(u-)+O(1/x)$ if $y^u \in \ZZ$ \cite[p.~54]{debruijn1951}). The count $\Psi(x,y)$ should be compared to 
\[ \Lambda(x,y):= x \lambda_y(u).\] 
We refer the reader to de Bruijn's original paper for the motivation for this definition. In particular, $\Lambda$ satisfies the following continuous variant of Buchstab's identity:
\[ \Lambda(x,y)=\Lambda(x,z)-\int_{y}^{z}\Lambda\left(\frac{x}{t},t\right)\frac{dt}{\log t}\]
for $y \le z$, to be compared with 
$\Psi(x,y)=\Psi(x,z)-\sum_{y < p \le z}\Psi(x/p,p)$.
De Bruijn proved \cite[Eq.~(1.4)]{debruijn1951}
\begin{equation}\label{eq:lambdayratio} \Lambda(x,y)=x\rho(u)\left(1+O_{\varepsilon}\left(\frac{\log(u+1)}{\log y}\right)\right)
\end{equation}
holds for $\log y > \sqrt{\log x}$. Saias \cite[Lem.~4]{Saias1989} improved the range to  $y \ge (\log x)^{1+\varepsilon}$. De Bruijn and Saias also provided asymptotic series expansion for $\lambda_y(u)$ in (roughly) powers of $\log (u+1)/\log y$.
Hildebrand and Tenenbaum \cite[Lem.~3.1]{Hildebrand1993} showed  that for $y \ge (\log x)^{1+\varepsilon}$, 
\begin{equation}\label{eq:lambdasize}
	\Lambda(x,y) \asymp_{\varepsilon} x\rho(u)
\end{equation}
for $y \ge (\log x)^{1+\varepsilon}$. Implicit in the proof of Proposition 4.1 of La Bret\`eche and Tenenbaum \cite{LaBreteche2005} is the estimate
\begin{equation}\label{eq:LaBTT}
	\Lambda(x,y) = x \rho(u) K\left( - \frac{\xi(u)}{\log y}\right) \left(1 + O_{\varepsilon}\left( \frac{1}{\log x}\right)\right), \qquad K(t):=\frac{t\zeta(t+1)}{t+1},
\end{equation}
for $y \ge (\log x)^{1+\varepsilon}$ where $\zeta$ is the Riemann zeta function and  $\xi\colon [1,\infty) \to [0,\infty)$ is defined via 
\[e^{\xi(u)}=1+u\xi(u).\] 
We include as an appendix a proof in English of \eqref{eq:LaBTT}. The function $K$ originates in de Bruijn's work \cite[Eq.~(2.8)]{debruijn1951}.
Evidently, $K(0)=1$ and $\lim_{t \to -1^+} K(t)= \infty$. Moreover, $K$ is strictly decreasing in $(-1,0]$  \cite{gorodetsky2022smooth}.

Suppose $\pi(x)=\mathrm{Li}(x)(1+O(\exp(-(\log x)^{a})))$ for some $a \in (0,1)$. Saias \cite[Thm.]{Saias1989}, improving on De Bruijn \cite{debruijn1951}, proved that
\begin{equation}\label{eq:saiaslambdares}
	\Psi(x,y) = \Lambda(x,y)(1 + O_{\varepsilon}(\exp(-(\log y)^{a-\varepsilon})))
\end{equation}
holds in the range $\log y \ge (\log \log x)^{\frac{1}{a}+\varepsilon}$. By the Vinogradov--Korobov zero-free region, we may take $a=3/5$. Saias writes without proof \cite[p.~81]{Saias1989} that under the Riemann hypothesis (RH) his methods give
\begin{equation}\label{eq:saiasrh} \Psi(x,y) = \Lambda(x,y) (1+O_{\varepsilon}(y^{\varepsilon-1/2}\log x))
\end{equation}
in the range $y \ge (\log x)^{2+\varepsilon}$, which recovers a conditional result of Hildebrand \cite{Hildebrand1984}. 
\subsection{\texorpdfstring{$G$}{G}}\label{sec:gdefs}
Define the entire function $I(s)=\int_{0}^{s} \tfrac{e^v-1}{v}dv$. As shown in \cite[Lem.~2.6]{Hildebrand1993}, the Laplace transform of $\rho$ is 
	\begin{equation}\label{eq:hat rho}
		\hat{\rho}(s) := \int_{0}^{\infty} e^{-sv}\rho(v)\, dv = \exp( \gamma + I(-s))
	\end{equation}
for all $s \in \CC$. In \cite{gorodetsky2022smooth} we studied in detail the ratio
\[ G(s,y) := \zeta(s,y) / F(s,y)\]
where
\[ \zeta(s,y):=\prod_{p \le y} (1-p^{-s})^{-1}=\sum_{n \text{ is }y\text{-smooth}} n^{-s} \qquad (\Re s >0)\]
is the partial zeta function and 
\begin{equation}\label{eq:Fdef}
	F(s,y):= \hat{\rho}((s-1)\log y)\zeta(s)(s-1)\log y.
\end{equation}
The function $G(s,y)$ is defined for $\Re s>0$ such that $\zeta(s) \neq 0$. Informally, $G$ carries information about the ratio $\Psi(x,y)/\Lambda(x,y)$, since $s\mapsto \zeta(s,y)/s$ is the Mellin transform of $x\mapsto \Psi(x,y)$ while $s\mapsto F(s,y)/s$ is the Mellin transform of $x\mapsto \Lambda(x,y)$ \cite[p.~54]{debruijn1951}.
As in \cite{gorodetsky2022smooth}, it is essential to write $G$ as $G_1 G_2$ where
\begin{align} 
\log G_1(s,y) &= \sum_{n \le y} \frac{\Lambda(n)}{n^{s}\log n}-(\log (\zeta(s)(s-1))+\log \log y+\gamma+ I((1-s)\log y)),\\
\log G_2(s,y) &= \sum_{k \ge 2} \sum_{y^{1/k} <p \le y} \frac{p^{-ks}}{k}.
\end{align}
We assume $\log \zeta(s)$ is chosen to be real when $s>1$.
\subsection{Main results}
Let $\psi(y)=\sum_{n \le y}\Lambda(n)$ and
\begin{equation}\label{eq:betadef}
\beta:=1-\frac{\xi(u)}{\log y}.
\end{equation}
\begin{thm}\label{thm:psierror}
	Assume RH. Fix $\varepsilon \in (0,1)$. Suppose that $x \ge C_{\varepsilon}$ and  $x^{1-\varepsilon} \ge y \ge (\log x)^{2+\varepsilon}$. Then
	\begin{equation}\label{eq:firstpart}
		\Psi(x,y) = \Lambda(x,y) G(\betasad,y) \left( 1 + O_{\varepsilon}\left( \frac{\log (u+1)}{y \log y} (   |\psi(y)-y| + y^{\frac{1}{2}})\right)\right).
	\end{equation}
\end{thm}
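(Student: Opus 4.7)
The approach is via Mellin inversion. Since $s \mapsto \zeta(s,y)/s$ and $s \mapsto F(s,y)/s$ are the Mellin transforms of $x \mapsto \Psi(x,y)$ and $x \mapsto \Lambda(x,y)$ respectively, and $\zeta(s,y) = G(s,y) F(s,y)$, Perron's formula and a contour shift give, after truncation,
\[
\Psi(x,y) - G(\betasad,y)\Lambda(x,y) = \frac{1}{2\pi i}\int_{(\betasad)} \bigl(G(s,y) - G(\betasad,y)\bigr) F(s,y)\, \frac{x^{s}}{s}\, ds
\]
up to a controlled truncation error. The line $\Re s = \betasad$ is the saddle-point contour for $F(s,y) x^{s}/s$: by the analysis behind \eqref{eq:LaBTT}, the integrand concentrates in a short window of width $\asymp 1/\sqrt{(\log y)\log(u+1)}$ around $s = \betasad$, and its full integral is $\Lambda(x,y)$. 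Under RH one has $\betasad > 1/2$ throughout the claimed range, so $G(s,y)$ is holomorphic on this line.

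The crux is then to estimate the variation of $G(s,y)$ on this saddle contour. I would split $G = G_1 G_2$ as in Section~\ref{sec:gdefs}. The factor $G_2$ involves only prime powers $p^k$ with $k \ge 2$ and $p > y^{1/k}$, so a direct bound on $\partial_s \log G_2$ uniformly on $\Re s = \betasad$ gives a variation of size $O(y^{-1/2}/\log y)$; this is the source of the $y^{1/2}$ term in the final bound. For $G_1$, a direct computation gives
\[
\partial_s \log G_1(s,y) = -\sum_{n \le y}\frac{\Lambda(n)}{n^{s}} - \frac{\zeta'(s)}{\zeta(s)} + \frac{y^{1-s}}{1-s},
\]
and Abel summation against $\psi(t) - t$ combined with the standard explicit-formula treatment of $\zeta'/\zeta$ reduces this derivative to a boundary contribution proportional to $(\psi(y)-y)/y^{s}$ together with a tail integral of the form $s\int_{1}^{y}(t-\psi(t))t^{-s-1}\,dt$, the latter bounded under RH by $O(y^{-1/2})$ via $|\psi(t)-t| \ll t^{1/2}\log^{2} t$. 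Integrating these estimates against the saddle density of $F(s,y)x^{s}/s$ and collecting the appropriate powers of $\log y$ and $\log(u+1)$ produces the two terms $|\psi(y)-y|$ and $y^{1/2}$ in the final error, weighted by $\log(u+1)/(y\log y)$.

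The principal technical obstacles will be (i) making all bounds uniform across the full range $(\log x)^{2+\varepsilon} \le y \le x^{1-\varepsilon}$, (ii) justifying the Perron truncation and the shift from a vertical line with $\Re s > 1$ down to $\Re s = \betasad$ without crossing poles or zeros of $\zeta(s)$, and (iii) controlling the off-saddle tails of the $\betasad$-line integral so they are absorbed into the claimed error. For (iii), decay of $\hat\rho((s-1)\log y)$ on vertical lines, combined with classical bounds on $\zeta(\betasad+it)$ valid under RH, should dominate the integrand sharply enough once $|t|$ leaves the Gaussian window. Assembling the saddle-window estimate for $G(s,y) - G(\betasad,y)$ with the tail and truncation bounds then yields \eqref{eq:firstpart}.
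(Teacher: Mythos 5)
Your setup — formula \eqref{eq:secondformula}, working on the saddle line $\Re s=\betasad$, splitting $G=G_1G_2$, and the identity
\[
(\log G_1)'(s,y)=-\sum_{n\le y}\frac{\Lambda(n)}{n^{s}}-\frac{\zeta'}{\zeta}(s)+\frac{y^{1-s}}{1-s}
=-(\psi(y)-y)y^{-s}+s\int_{y}^{\infty}\frac{\psi(t)-t}{t^{s+1}}\,dt
\]
— matches the paper's strategy and the content of Lemma \ref{lem:logg1formula}/Corollary \ref{cor:logg1size}. The genuine gap is your step (iii). The claim that decay of $\hat{\rho}((s-1)\log y)$ together with classical bounds on $\zeta$ makes the off-saddle tails negligible is only true in the ``medium $u$'' regime $u\ge(\log y)(\log\log y)^3$ (Propositions \ref{prop:medu} and \ref{prop:large}); it fails when $u$ is bounded or small, which is most of the claimed range since $y$ may be as large as $x^{1-\varepsilon}$. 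Indeed, by the third case of Lemma \ref{lem:i bounds} one has $\hat{\rho}((s-1)\log y)(s-1)\log y=1+O(y^{1-\beta}/(|t|\log y))$, so $F(s,y)\approx\zeta(s)$ on the line with no decay beyond $x^{s}/s$; the truncation height must then be taken as large as a power of $y$ (Proposition \ref{prop:small}), and putting absolute values inside the tail integral over $t_1\le|t|\le t_2$ — even with the sharp pointwise bound \eqref{eq:pt} and Lemmas \ref{lem:convex}, \ref{lem:second} — falls short of \eqref{eq:firstpart} by a factor of $(\log y)^3$. The paper overcomes this by exploiting the oscillation of $x^{it}$: it replaces $G(\beta+it,y)/G(\beta,y)-1$ by $\log G$, treats $\log G_1$ and $\log G_2$ separately, and integrates by parts three times against the antiderivative of $(x/y)^{it}$ (resp.\ $(x/p^2)^{it}$), using the zero-sum representation of $y^{it}(G_1'/G_1)(\beta+it,y)$ and its $t$-derivatives; this is exactly where the hypothesis $y\le x^{1-\varepsilon}$ is used, via $\log(x/y)\gg_{\varepsilon}\log x$. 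Your proposal neither performs this step nor explains the role of $y\le x^{1-\varepsilon}$, so as written it cannot reach the stated error term in the small-$u$ part of the range.

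Two smaller quantitative shortfalls. First, inside the saddle window (whose scale is $\asymp 1/\sqrt{(\log x)(\log y)}=1/(\sqrt{u}\log y)$, not $1/\sqrt{(\log y)\log(u+1)}$), bounding $G(s,y)-G(\betasad,y)$ by $|\Im s|\max|G'|$ and integrating against the Gaussian loses a factor $\sqrt{u}$ relative to \eqref{eq:firstpart}; the paper Taylor-expands to second order and uses that the first Gaussian moment vanishes, so only $|G'(\betasad,y)|/\log x$ plus a $G''$ term survive (Proposition \ref{prop:large}). Second, bounding the tail $s\int_y^\infty(\psi(t)-t)t^{-s-1}dt$ by von Koch's pointwise $|\psi(t)-t|\ll t^{1/2}\log^2 t$ gives $O(y^{1/2}\log^2 y)$, not the claimed $O(y^{1/2})$; to recover the clean $y^{1/2}$ one needs either the zero-sum argument of Corollary \ref{cor:logg1size} (with $\sum_{\rho}1/(|\rho-s||\rho|)\ll 1$) or a mean-square estimate for $\psi(t)-t$ under RH. These two points are fixable; the tail/oscillation issue above is the essential missing idea.
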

The following theorem gives an asymptotic formula for $\Psi(x,y)$ for $y$ smaller than $(\log x)^{2}$.
\begin{thm}\label{thm:psierror2}
	Assume RH.	Fix $\varepsilon \in (0,1/3)$. Suppose that $x \ge C_{\varepsilon}$ and $(\log x)^{3} \ge y \ge (\log x)^{4/3+\varepsilon}$. Then
	\begin{equation}\label{eq:psig}
		\Psi(x,y) = \Lambda(x,y) G(\betasad,y) \left( 1 + O_{\varepsilon}\left( \frac{(\log y)^3}{y^{\frac{1}{2}}} + \frac{(\log x)^3 (\log y)^3}{y^2}\right)\right).
	\end{equation}
If $y \le (\log x)^{2-\varepsilon}$ then the error term can be improved to $O_{\varepsilon} ((\log x)^3/(y^2 \log y))$.
\end{thm}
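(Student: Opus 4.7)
I would prove Theorem \ref{thm:psierror2} by the Perron/saddle-point strategy used for Theorem \ref{thm:psierror}, tuning the truncation height and contour for the smaller range of $y$. Starting from an effective Perron formula
\[
\Psi(x,y) = \frac{1}{2\pi i}\int_{\betasad - iT}^{\betasad + iT} \zeta(s,y)\,\frac{x^s}{s}\,ds + E_{\mathrm{Per}},
\]
on the vertical line through the saddle point $\betasad = 1 - \xi(u)/\log y$, I would factor $\zeta(s,y) = F(s,y)G(s,y)$ and split $G(s,y) = G(\betasad,y) + (G(s,y)-G(\betasad,y))$. Integrating the first piece against $F(s,y)x^s/s$ recovers $G(\betasad,y)\Lambda(x,y)$ by Mellin inversion for $\Lambda$ (the shift from $\Re s > 1$ down to $\betasad$ crosses no poles, the $\zeta$-pole being cancelled by $(s-1)$ inside $F$), up to a truncation error of the same quality as $E_{\mathrm{Per}}$. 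The whole task then reduces to bounding
\[
E := \frac{1}{2\pi i}\int_{\betasad - iT}^{\betasad + iT} F(s,y)\bigl(G(s,y) - G(\betasad,y)\bigr)\,\frac{x^s}{s}\,ds.
\]

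\textbf{Controlling $E$.} The saddle-point analysis of $F(s,y)x^s$, as in the derivation of \eqref{eq:LaBTT}, shows that $|F(s,y)x^s|$ has a Gaussian-type peak at $s=\betasad$ of width $\asymp 1/\sqrt{u\log y}$. To estimate $|\log G(\betasad+i\tau,y) - \log G(\betasad,y)|$ on this contour, I would use the factorization $G = G_1G_2$: the prime-power sum $\log G_2$ is short and its oscillation in $\tau$ is bounded directly by $O(y^{-1/2}(\log y)^{O(1)})$. For $\log G_1$, I would apply partial summation to $\sum_{n\le y}\Lambda(n)n^{-s}$, substitute $\psi(t)=t+(\psi(t)-t)$, invoke the RH bound $|\psi(t)-t|\ll t^{1/2}(\log t)^2$, and exploit the designed cancellation against $-\log(\zeta(s)(s-1))-\log\log y-\gamma-I((1-s)\log y)$. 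This produces a bound of the shape $y^{-1/2}(\log y)^{O(1)}(1+|\tau|\log y)$ for $s = \betasad + i\tau$; integrating against the saddle profile of $F(s,y)x^s$ yields the first error term $(\log y)^3/y^{1/2}$. The second error term $(\log x)^3(\log y)^3/y^2$ arises from balancing $E_{\mathrm{Per}}$ against the saddle tails $|\tau|\gg 1/\sqrt{u\log y}$ of $E$ and the second-order Taylor remainder in $\log G$.

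\textbf{Main obstacle.} The principal difficulty is that in the extended range $y \ge (\log x)^{4/3+\varepsilon}$, $u$ can be as large as $\log x/\log\log x$, so $\betasad$ may be bounded away from $1$ by a constant and $x^{\betasad}$ no longer dominates the contour as cleanly as in the range of Theorem \ref{thm:psierror}. Simultaneously balancing the Perron truncation, the saddle tails and the second-order correction in $G$ forces a joint choice of $T$ and saddle window that produces the $(\log x)^3(\log y)^3/y^2$ term; this is the term that actually constrains the admissible range, becoming $o(1)$ only once $y$ exceeds roughly $(\log x)^{3/2}$. For the improvement in the subrange $y \le (\log x)^{2-\varepsilon}$, the value of $u$ is larger still, the saddle window is correspondingly narrower, and the Taylor coefficients of $\log G$ at $\betasad$ are smaller; a tighter analysis then gains an additional $y^{-3/2}$ factor and yields the sharper $(\log x)^3/(y^2\log y)$ bound.
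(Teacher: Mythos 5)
Your skeleton (Perron at $\sigma=\betasad$, factoring out $G(\betasad,y)$ and bounding $\int(G(s,y)-G(\betasad,y))F(s,y)x^s s^{-1}ds$) is indeed the paper's strategy; moreover, in the range $(\log x)^{4/3+\varepsilon}\le y\le(\log x)^3$ one automatically has $u\ge(\log y)(\log\log y)^3$, so the truncation issues you worry about are already settled by Proposition \ref{prop:medu}/\ref{prop:large} with a negligible error $T^{-c_\varepsilon}\ll 1/y$; the second error term does \emph{not} come from balancing the Perron truncation. The genuine gap is in your control of $G$, which is the whole content of Theorem \ref{thm:psierror2} beyond the shared machinery. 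Your claimed bounds --- oscillation of $\log G_2$ of size $O(y^{-1/2}(\log y)^{O(1)})$, and $|\log G(\betasad+i\tau,y)-\log G(\betasad,y)|\ll y^{-1/2}(\log y)^{O(1)}(1+|\tau|\log y)$ --- implicitly assume $\betasad\ge 1/2+\varepsilon$, i.e.\ $y\ge(\log x)^{2+\varepsilon}$, which is exactly outside the present range. Here $y^{1-\betasad}\asymp u\log(u+1)\asymp\log x$, so $y^{-\betasad}\asymp(\log x)/y$ and, setting $A=(\log x)/y^{1/2}$, Lemma \ref{lem:logGderivaccurate2} gives $(\log G_2)^{(i)}(\betasad+it,y)\asymp(\log y)^i\max\{A,A^2\}/\max\{1,|\log A|\}$, which for $y\le(\log x)^{2-\varepsilon}$ is a positive power of $\log x$, not $y^{-1/2+o(1)}$; likewise $(\log G_1)'(\betasad)\ll A\log^2y$, larger than your bound by the factor $A$. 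In particular $G(\betasad,y)\to\infty$ when $y\le(\log x)^{2-\varepsilon}$, and a pointwise ``$G$ is close to $G(\betasad,y)$'' estimate of your shape is simply false; the first-order oscillation across the saddle window can even be a power of $\log x$.

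What the paper actually does after invoking Proposition \ref{prop:large} is: (i) bound $\max_{|v|\le t_0}|G(\betasad+iv,y)|/G(\betasad,y)$ by $O_\varepsilon(1)$, using the positivity \eqref{eq:pos} to get $|G_2(s,y)|\le G_2(\Re s,y)$ and integrating $|G_1'/G_1|$ over the short window for $G_1$ --- without this step the Taylor terms $G'$, $G''$ cannot be compared to the (large) normalizer $G(\betasad,y)$; and (ii) track $(\log G)'$, $(\log G)''$ and $((\log G)')^2$ divided by the saddle-width factors $\log x$ and $(\log x)(\log y)$, with the dominant contributions coming from the prime-square part $G_2$. The term $(\log x)^3(\log y)^3/y^2$, and the improved $(\log x)^3/(y^2\log y)$ for $y\le(\log x)^{2-\varepsilon}$, arise precisely from $((\log G_2)'(\betasad,y))^2/((\log x)(\log y))\asymp A^4/((\log x)(\log y)\max\{1,|\log A|\}^2)$; the improvement for smaller $y$ comes from $|\log A|\asymp_\varepsilon\log y$ (i.e.\ $\betasad$ bounded away from $1/2$ in Lemma \ref{lem:logGderivaccurate2}), not, as you suggest, from the Taylor coefficients of $\log G$ being smaller there --- they are in fact larger. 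Your $G_1$ treatment via partial summation and RH is fine in spirit (it is Corollary \ref{cor:logg1size}), but as proposed the argument would only reprove (a version of) Theorem \ref{thm:psierror}, not the extension to $y\ge(\log x)^{4/3+\varepsilon}$.
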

Theorems \ref{thm:psierror} and \ref{thm:psierror2}, proved in \S\ref{sec:rherror}, show that
\[ \Psi(x,y) \sim \Lambda(x,y) G(\betasad,y)\]
holds when $y/((\log x)^{3/2}(\log \log x)^{-1/2}) \to \infty$. This range is shown to be optimal in Theorem 2.14 of  \cite{gorodetsky2022smooth}. The same theorem also supplies an alternative proof of Theorem \ref{thm:psierror2} when $y \le (\log x)^{2-\varepsilon}$ (the proof can be adapted to cover $(\log x)^{2-\varepsilon} \le y \le (\log x)^3$ as well).
 
Hildebrand showed that RH is equivalent to $\Psi(x,y) \asymp_{\varepsilon} x\rho(u)$ for $y \ge (\log x)^{2+\varepsilon}$ \cite{Hildebrand1984}. He conjectured that $\Psi(x,y)$ is not of size $\asymp x\rho(u)$ when $y \le (\log x)^{2-\varepsilon}$ \cite{Hildebrand1986}. This was recently confirmed by the author \cite{gorodetsky2022smooth}. This also follows (under RH) from Theorem \ref{thm:psierror2}, since $\Lambda(x,y) \asymp_{\varepsilon} x\rho(u)$ for $y \ge (\log x)^{1+\varepsilon}$ while (under RH) $G(\betasad,y) \to \infty$ when $y \le (\log x)^{2-\varepsilon}$ and $x \to \infty$ (this follows from the estimates for $G$ in \cite{gorodetsky2022smooth}, see \S2).

Theorems \ref{thm:psierror} and \ref{thm:psierror2} and their proofs have their origin in our work in the polynomial setting \cite{gorodetsky}, where $\Psi(x,y)$ corresponds to the number of $m$-smooth polynomials of degree $n$ over a finite field, while $\Lambda(x,y)$ is analogous to the number of $m$-smooth permutations of $S_n$ (multiplied by $q^n/n!$). In that setting, the analogue of $G_1(s,y)$ is identically $1$ (the relevant zeta function has no zeros) which makes the analysis unconditional.
\subsection{Applications: sign changes and biases}
From Theorem \ref{thm:psierror} we deduce in \S\ref{sec:deduccor} the following
\begin{cor}\label{cor:psiover}
	Assume RH.	Fix $\varepsilon \in (0,1)$. Suppose that $x\ge C_{\varepsilon}$ and $x^{1-\varepsilon} \ge y \ge (\log x)^{2+\varepsilon}$. Then
	\begin{align}\label{eq:psiG}
		\Psi(x,y)/\Lambda(x,y) &= 1 +\frac{y^{-\betasad}}{\log y} \bigg(- \sum_{|\rho| \le T} \frac{y^{\rho}}{\rho-\betasad} + \frac{y^{\frac{1}{2}}}{2\betasad-1}+ O_{\varepsilon}\big(\frac{y^{\frac{1}{2}}}{\log y } +\frac{y\log^2(yT)}{T}+ \frac{|\psi(y)-y|+y^{\frac{1}{2}}}{u}\big)\bigg)\\
		&= 1 + \frac{y^{-\betasad}}{\log y} ( (\psi(y)-y)(1+O_{\varepsilon}(u^{-1})) + O_{\varepsilon}(y^{\frac{1}{2}}))\\
		&=1 + O_{\varepsilon}((\log (u+1))(\log x)y^{-\frac{1}{2}})
	\end{align}
holds for $T \ge 4$, where the sum is over zeros of $\zeta$.
\end{cor}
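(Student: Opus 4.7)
The plan is to deduce the corollary from Theorem~\ref{thm:psierror} by explicitly evaluating $G(\betasad,y) = G_1(\betasad,y) G_2(\betasad,y)$ via the explicit formula under RH. Theorem~\ref{thm:psierror} reduces $\Psi(x,y)/\Lambda(x,y) - 1$ to $G(\betasad,y) - 1$ plus a multiplicative error of order $\log(u+1)(|\psi(y)-y|+y^{1/2})/(y\log y)$; once the final answer certifies $G(\betasad,y)-1=o(1)$, this becomes the $(|\psi(y)-y|+y^{1/2})/u$ additive term appearing in the corollary. Then $G-1 = \log G_1 + \log G_2 + O((\log G)^2)$, the quadratic correction being absorbed once it is verified that $\log G$ is of order $y^{1/2-\betasad}\log y$ in the stated range.

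For $\log G_2(\betasad,y) = \sum_{k\ge 2}\sum_{y^{1/k}<p\le y}p^{-k\betasad}/k$, only $k=2$ matters: partial summation against the RH estimate $\pi(t) = \mathrm{Li}(t) + O(t^{1/2}\log t)$ yields $\tfrac12\sum_{\sqrt y<p\le y}p^{-2\betasad} = y^{1/2-\betasad}/((2\betasad-1)\log y) + O(y^{-\betasad})$, accounting for the $y^{1/2}/(2\betasad-1)$ term. The tail $k\ge 3$ contributes $O(y^{-2/3+o(1)})$.

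The core computation is $\log G_1$. Starting from $S:=\sum_{n\le y}\Lambda(n)/(n^{\betasad}\log n) = \int_{2^-}^y t^{-\betasad}/\log t\, d\psi(t)$ and integrating by parts with the split $\psi(t) = t + (\psi(t)-t)$, the ``$t$''-part produces $2^{1-\betasad}/\log 2 + \int_2^y t^{-\betasad}/\log t\, dt$; the substitution $t = y^v$ followed by $w = v\xi(u)$ turns the integral into $\int_{(1-\betasad)\log 2}^{\xi(u)} e^w/w\, dw = I(\xi(u)) - I((1-\betasad)\log 2) + \log\log y - \log\log 2$, which cancels the subtractions $\log\log y + \gamma + I(\xi(u)) + \log(\zeta(\betasad)(\betasad-1))$ in the definition of $\log G_1$ modulo $O(1)$ constants. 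These residual constants are then compensated by the constant contribution from $\psi(t)-t$ (the $-\log(2\pi)$ term and the trivial zeros in the explicit formula), leaving a net $O(y^{-\betasad})$. Feeding in the truncated explicit formula $\psi(t) - t = -\sum_{|\rho|\le T}t^\rho/\rho + O(t\log^2(yT)/T + \log^2 t)$ and applying the same partial-summation-plus-integration-by-parts trick to each zero yields $-y^{\rho-\betasad}/((\rho-\betasad)\log y) + O(y^{1/2-\betasad}/\log^2 y)$ per zero; summing produces $-\frac{y^{-\betasad}}{\log y}\sum_{|\rho|\le T} y^\rho/(\rho-\betasad)$ together with the truncation error $O(y^{1-\betasad}\log^2(yT)/(T\log y))$, matching the first display after extracting the $y^{-\betasad}/\log y$ prefactor.

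The second display follows from $\sum_\rho y^\rho/(\rho-\betasad) = \sum_\rho y^\rho/\rho + O(y^{1/2})$, obtained by writing $1/(\rho-\betasad) - 1/\rho = \betasad/(\rho(\rho-\betasad))$ and using $\sum_\rho |\rho|^{-2} < \infty$ together with $|y^\rho|=y^{1/2}$ under RH, followed by $\sum_\rho y^\rho/\rho = -(\psi(y)-y) + O(1)$ from the unconditional explicit formula and a choice such as $T = y^2$ to kill the truncation error. The third display applies the RH bound $|\psi(y)-y| \ll y^{1/2}\log^2 y$ and the identity $y^{-\betasad}/\log y \cdot y^{1/2}\log^2 y = y^{-1/2}e^{\xi(u)}\log y \asymp \log(u+1)(\log x)/y^{1/2}$, using $e^{\xi(u)} \asymp u\log(u+1)$ and $u\log y = \log x$. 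The main obstacle is threading the $O(1)$ constant cancellations in $\log G_1$: the boundary terms at $t=2$, the $-\log(2\pi)$ and trivial-zero pieces of the explicit formula, and the Taylor expansion of $\log(\zeta(\betasad)(\betasad-1))$ near $\betasad=1$ each contribute $O(1)$ quantities that must compensate one another down to the $O(y^{-\betasad})$ slack permitted by the stated error, rather than being absorbed one by one.
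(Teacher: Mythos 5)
Your overall route coincides with the paper's: the corollary is deduced from Theorem \ref{thm:psierror} together with an asymptotic evaluation of $G(\betasad,y)=G_1(\betasad,y)G_2(\betasad,y)$; in the paper that evaluation is precisely Lemma \ref{lem:sizeg} (simplified via \eqref{eq:ybetasize}), which rests on Lemma \ref{lem:logg1formula} and Lemma \ref{lem:logGderivaccurate2}, both quoted from \cite{gorodetsky2022smooth}, so the paper's proof of the corollary is a one-line citation. Your reduction step (absorbing the multiplicative error of Theorem \ref{thm:psierror} into the $(|\psi(y)-y|+y^{1/2})/u$ term using $\betasad\ge 1/2+c_\varepsilon$, $y^{1-\betasad}\asymp u\log(u+1)$ and $G=1+o(1)$), your treatment of $\log G_2$ (up to the minor slip that the $k\ge 3$ tail is $y^{1/3-\betasad+o(1)}$ rather than $y^{-2/3+o(1)}$, still negligible here), and the passage from the first display to the second and third displays are all sound.

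The genuine gap is the evaluation of $\log G_1(\betasad,y)$, which is the heart of the first display, and your sketch does not establish it. After feeding $\psi(t)-t=-\sum_{|\rho|\le T}t^{\rho}/\rho+O(\cdot)$ into the partial summation, each zero contributes, besides $-y^{\rho-\betasad}/((\rho-\betasad)\log y)$, lower-endpoint terms such as $2^{\rho-\betasad}/((\rho-\betasad)\log 2)$ and $(\rho-\betasad)^{-1}\int_2^y t^{\rho-\betasad-1}(\log t)^{-2}\,dt$, which are of size $\asymp 1/|\rho-\betasad|$, not $O(y^{1/2-\betasad}/\log^2 y)$; and in any case an error that is ``$O(y^{1/2-\betasad}/\log^2 y)$ per zero'', summed over the $\asymp T\log T$ zeros with $|\rho|\le T$, would exceed the stated total error by a factor of order $T\log T$. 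These lower-endpoint sums, together with the boundary terms at $t=2$, the $-\log 2\pi$ and trivial-zero pieces of the explicit formula, $\gamma$, $-\log\log 2$, $I((1-\betasad)\log 2)$ and $\log(\zeta(\betasad)(\betasad-1))$ (note $\betasad$ is not near $1$ in most of the stated range, so a Taylor expansion at $\betasad=1$ is not available), are each $O(1)$, far above the admissible error $O(y^{1/2-\betasad}/\log^2 y+y^{1-\betasad}\log^2(yT)/(T\log y))$; they disappear only through an exact collective cancellation (the terms near $t=2$ must be recombined via the explicit formula rather than bounded zero by zero), which you explicitly defer as ``the main obstacle'' and never carry out. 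That cancellation is exactly the content of Lemma \ref{lem:logg1formula} (hence of Corollary \ref{cor:logg1size} and Lemma \ref{lem:sizeg}); without proving it or invoking it, the first display --- and with it the corollary --- is not established.
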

Corollary \ref{cor:psiover} implies that large positive (resp.~negative) values of $\psi(y)-y$ lead to large positive (resp.~negative) values of $\Psi(x,y)  -\Lambda(x,y)$ and vice versa. Large and small values of $\psi(y)-y$ were exhibited by Littlewood \cite[Thm.~15.11]{MV}. Note that Corollary \ref{cor:psiover} sharpens \eqref{eq:saiasrh} if $y \le x^{1-\varepsilon}$.\footnote{For $x \ge y \ge x^{1-\varepsilon}$, de Bruijn proved $\Psi(x,y) = \Lambda(x,y) (1 + O_{\varepsilon}( (\log x)^2 /y^{1/2}))$ under RH \cite[Eq.~(1.3)]{debruijn1951}.} 

Let $\pi(x)$ be the count of primes up to $x$ and $\mathrm{Li}(x)$ be the logarithmic integral. It is known that $\pi(x)-\mathrm{Li}(x)$ is biased towards positive values in the following sense. Assuming RH and the Linear Independence hypothesis (LI) for zeros of $\zeta$, Rubinstein and Sarnak \cite{Rubinstein} showed that the set
\[ \{ x\ge 2 :  \pi(x) > \mathrm{Li}(x)\}\]
has logarithmic density $\approx 0.999997$. This is an Archimedean analogue of the classical Chebyshev's bias on primes in arithmetic progressions.
We use Corollary \ref{cor:psiover} to exhibit a similar bias for smooth integers. Let us fix the value of $\beta=1-\xi(u)/\log y$ to be
\[\betasad=\beta_0\]
where $\beta_0 \in (1/2,1)$.  This amounts to restricting $x$ to be a function $x=x(y)$ of $y$ defined by
\begin{equation}\label{eq:xy} x= \exp\left(\frac{y^{1-\beta_0}-1}{1-\beta_0} \right).
\end{equation}
In particular $y=(\log x)^{1/(1-\beta_0)+o(1)}$. Then Corollary \ref{cor:psiover} shows
\begin{equation}\label{eq:formbias} \frac{\Psi(x(y),y)-\Lambda(x(y),y)}{\Lambda(x(y),y)} y^{\betasad_0-\frac{1}{2}}\log y = -\sum_{|\rho| \le T} \frac{y^{\rho-\frac{1}{2}}}{\rho-\beta_0} + \frac{1}{2\beta_0-1} + O_{\beta_0}\left( \frac{y^{\frac{1}{2}}\log^2 (yT)}{T}+\frac{1}{\log y}\right).
\end{equation}
Applying the formalism of Akbary, Ng and Shahabi \cite{Akbary} to the right-hand side of \eqref{eq:formbias} we deduce immediately
\begin{cor}\label{cor:bias}
	Assume RH. Assume LI for $\zeta$. Fix $\beta_0 \in (1/2,1)$ and let $x$ be a function of $y$ defined as in \eqref{eq:xy}. Then the set
	\[ \{ y \ge 2: \Psi(x(y),y) > \Lambda(x(y),x) \}\]
	has logarithmic density greater than $1/2$, and the left-hand side of \eqref{eq:formbias} has a limiting distribution in logarithmic sense.
\end{cor}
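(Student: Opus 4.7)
The plan is to deduce Corollary \ref{cor:bias} directly from \eqref{eq:formbias} using the framework of Akbary, Ng and Shahabi \cite{Akbary}. Write $u:=\log y$ and $c:=1/(2\beta_0-1)>0$, and, under RH, split the nontrivial zeros of $\zeta$ as $\rho=\tfrac{1}{2}+i\gamma$. Then \eqref{eq:formbias} rewrites as
\[
\mathcal{E}(y) := \frac{\Psi(x(y),y)-\Lambda(x(y),y)}{\Lambda(x(y),y)}\, y^{\beta_0-1/2}\log y = c - \sum_{|\gamma|\le T}\frac{e^{iu\gamma}}{\tfrac{1}{2}+i\gamma-\beta_0} + O_{\beta_0}\!\left(\frac{y^{1/2}\log^2(yT)}{T}+\frac{1}{\log y}\right),
\]
and taking, for instance, $T=y$ renders the error $o(1)$ as $y\to\infty$, so $\mathcal{E}$ is asymptotically almost periodic in $u$.

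The main step is to apply the formalism of \cite{Akbary} to the zero-sum $S(u):=-\sum_{\gamma}e^{iu\gamma}/(\tfrac{1}{2}+i\gamma-\beta_0)$. Their hypotheses in this setting amount to (i) the explicit-formula-type approximation furnished by \eqref{eq:formbias}, and (ii) the $L^2$-condition $\sum_{\gamma}|\tfrac{1}{2}+i\gamma-\beta_0|^{-2}\ll\sum_{\gamma}\gamma^{-2}<\infty$, which holds unconditionally via $N(T)\ll T\log T$. Their theorem then produces a limiting distribution $\mu$ on $\mathbb{R}$ for $\mathcal{E}$ with respect to the logarithmic measure $dy/y$, which is the second assertion of the corollary. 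Under LI for $\zeta$, $\mu$ is explicitly the law of
\[
Z := c + \sum_{\gamma>0} 2\,\Re\!\left(\frac{-e^{i\theta_\gamma}}{\tfrac{1}{2}+i\gamma-\beta_0}\right),
\]
where $\{\theta_\gamma\}_{\gamma>0}$ are i.i.d.~uniform on $[0,2\pi)$.

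For the bias, each summand above is a shifted cosine in a uniform phase and is therefore symmetric about $0$; by independence (from LI), the random variable $Y:=Z-c$ is symmetric about $0$. Standard features of the Akbary--Ng--Shahabi construction (absolute continuity of $\mu$ and full support on $\mathbb{R}$, coming from infinitely many contributing $\gamma$) give $\mathbb{P}(|Y|<c)>0$, and combining this with symmetry yields $\mathbb{P}(Y\ge c)=\mathbb{P}(Y\le -c)=(1-\mathbb{P}(|Y|<c))/2<\tfrac{1}{2}$. Hence
\[
\mathbb{P}(Z>0) = \mathbb{P}(Y>-c) = 1-\mathbb{P}(Y\le -c) > \tfrac{1}{2},
\]
which is precisely the stated logarithmic-density inequality. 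The only real technical point is to verify that the general hypotheses of \cite{Akbary} apply to our normalisation; this is routine once \eqref{eq:formbias} is in hand, as all arithmetic input has been isolated in that identity.
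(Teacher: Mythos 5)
Your proposal is correct and follows essentially the same route as the paper, which likewise deduces the corollary by feeding the right-hand side of \eqref{eq:formbias} into the Akbary--Ng--Shahabi formalism (the paper states this deduction without further detail). Your additional verifications — square-summability of $\sum_\rho|\rho-\beta_0|^{-2}$, the shift $c=1/(2\beta_0-1)>0$ producing the bias via symmetry of the zero-sum under LI, and continuity/positive mass near $0$ of the limiting law — are exactly the routine checks implicit in that citation.
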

In the same way that Chebyshev's bias for primes relates to the contribution of prime squares, this is also the case for smooth integers. Writing $G$ as $G_1 G_2$ as in \S\ref{sec:gdefs}, $G_2$ captures the contribution of proper powers of primes. When $\beta_0 \in (1/2,1)$, the only significant term in $G_2(\beta_0,y)$ is $k=2$, which corresponds to squares of primes. The squares lead to the term $y^{1/2}/(2\beta_0-1)$ in \eqref{eq:formbias} which creates the bias. 
\begin{remark}
	Consider the arithmetic function $\alpha_y(n)$ defined implicitly via
	\[ \sum_{n\ge 1} \frac{\alpha_y(n)}{n^s} = \exp\bigg(\sum_{m \le y} \frac{\Lambda(m)}{ \log m}\frac{1}{m^s}\bigg).\]
	This function is supported on $y$-smooth numbers and coincides with the indicator of $y$-smooth numbers on squarefree integers. Working with the summatory function of $\alpha_y$ instead of $\Psi(x,y)$, the bias discussed above disappears. This is because, modifying the proof of Theorem \ref{thm:psierror}, one finds that
	\[ \sum_{n \le y} \alpha_y(n) = \Lambda(x,y) G_1(\beta,y)\left( 1 + O_{\varepsilon}\left( \frac{\log (u+1)}{y \log y} ( |\psi(y)-y|+y^{\frac{1}{2}})\right)\right)\]
	holds in $x^{1-\varepsilon} \ge y \ge (\log x)^{2+\varepsilon}$, meaning the bias-causing factor $G_2(\beta,y)$ does not arise. This is analogous to how the indicator function of primes is biased, while $\Lambda(n)/\log n$ is not.
\end{remark}
\begin{remark}
It is interesting to see if one can formulate and prove variants of Corollaries \ref{cor:psiover} and \ref{cor:bias}  in the range $y \le (\log x)^{1-\varepsilon}$. In this range, an accurate main term for $\Psi(x,y)$ was established in \cite{LaBreteche}.
\end{remark}
\subsection{Strategy behind Theorems \ref{thm:psierror} and \ref{thm:psierror2}}
We write $\Psi(x,y)$ as a Perron integral, at least for non-integer $x$:
\[ \Psi(x,y) = \frac{1}{2\pi i} \int_{(\sigma)} \zeta(s,y) \frac{x^s}{s} ds \]
where $\sigma$ can be any positive real. For non-integer $x$ we also have
\begin{equation}\label{eq:perronforlambda}
	\Lambda(x,y)=\frac{1 }{2\pi i}\int_{(\sigma)} F(s,y) \frac{x^s}{s}ds
\end{equation}
whenever $\sigma>\varepsilon$ and $y \ge C_{\varepsilon}$. Indeed, the Laplace inversion formula expresses $\Lambda(x,y)$ as
\begin{equation}\label{eq:perronforlambda2}
	\Lambda(x,y)=x\lambda_y(u)=\frac{x}{2\pi i}\int_{(c)}\hat{\lambda}_y(s)e^{us}ds=\frac{1 }{2\pi i}\int_{(1+\frac{c}{\log y})}( \hat{\lambda}_y((s-1)\log y) \log y) x^s ds
\end{equation}
for any $c$ such that 
\begin{equation}\label{eq:laplacelambday} \hat{\lambda}_y(s):=\int_{0}^{\infty} e^{-sv} \lambda_y(v)dv,
\end{equation}
converges absolutely for $\Re s \ge c$. In particular, we may take $c>-(\log y)/(1+\varepsilon)$ if we assume $y\ge C_{\varepsilon}$, as Saias showed, see Corollary \ref{cor:lambdaconv}. 
As shown by de Bruijn \cite[Eq.~(2.6)]{debruijn1951} (cf.~\cite[Lem.~6]{Saias1989}),
\[ \hat{\lambda}_y(s) = \hat{\rho}(s) K(s/\log y).\]
By definition of $F$, \eqref{eq:Fdef}, we can rewrite \eqref{eq:perronforlambda2} as \eqref{eq:perronforlambda}.
As Saias does, we choose to work with $\sigma=\betasad$, which is essentially a saddle point for $F(s,y)x^s$. If $x \ge y \ge (\log x)^{1+\varepsilon}$ and $x \ge C_{\varepsilon}$ then Lemma \ref{lem:xilem} implies
\[ \betasad \ge c_{\varepsilon}>0.\]
Saias proved \eqref{eq:saiaslambdares} by showing that $\zeta(s,y)$ and $F(s,y)$ are close and so if we subtract
\[ \Psi(x,y) -\Lambda(x,y)  = \frac{1}{2\pi i} \int_{(\betasad)} (\zeta(s,y)-F(s,y)) \frac{x^s}{s}ds\]
then we can bound the integral by using pointwise bounds for the integrand. 
Instead of subtracting $\Lambda(x,y)$, we subtract $\Lambda(x,y)$ times $G(\betasad,y)$, which leads to
\begin{equation}\label{eq:secondformula}
		\Psi(x,y) =\Lambda(x,y) G(\betasad,y) \left(1 +  \frac{\Lambda(x,y)^{-1}}{2\pi i}  \int_{(\betasad)} \frac{G(s,y) - G(\betasad,y)}{G(\betasad,y)}F(s,y) \frac{x^s}{s} ds\right).
	\end{equation}
We want to bound the integral in \eqref{eq:secondformula}. The proof of Theorem \ref{thm:psierror} considers separately the range
\begin{equation}\label{eq:usmall}
u \ge (\log y) ( \log \log y)^3
\end{equation}
and its complement. When $u$ satisfies \eqref{eq:usmall}, then in \eqref{eq:secondformula} one needs only small values of $\Re s$ to estimate the integral ($|\Re s| \le 1/\log y$) with arbitrary power saving in $y$. This is an unconditional observation established in Proposition \ref{prop:medu}. However, for smaller $u$, one needs $|\Re s|$ going up to a power of $y$ if one desires power saving in $y$, which makes the proof more involved.

In our proofs, RH is only invoked at the very end to estimate $G_1$ and its derivatives. For instance, in the range where \eqref{eq:usmall} and $y \ge (\log x)^{2+\varepsilon}$ hold, we prove in \eqref{eq:psiuncon} the \textit{unconditional} estimate
	\begin{equation}\label{eq:psiuncon2}
		\Psi(x,y) = \Lambda(x,y) G(\betasad,y) \left( 1+O_{\varepsilon}\left( \frac{\max_{|v|\le 1} |G'(\betasad+iv,y)|}{G(\betasad,y) \log x} + \frac{ \max_{|v|\le 1}|G''(\betasad+iv,y)|}{G(\betasad,y) (\log x)(\log y)} + \frac{1}{y}\right)\right).
	\end{equation}
See \eqref{eq:smallutriangle} for a similar estimate for $u \le  (\log y)(\log \log y)^3$. In particular, our proofs are easily modified to recover \eqref{eq:saiaslambdares}.
\subsection*{Conventions}
The letters $C,c$ denote absolute positive constants that may change between different occurrences. We denote by $C_{\varepsilon},c_{\varepsilon}$ positive constants depending only on $\varepsilon$, which may also change between different occurrences. 
The notation $A \ll B$ means $|A| \le C B$ for some absolute constant $C$, and $A\ll_{\varepsilon} B$ means $|A| \le C_{\varepsilon} B$. We write $A \asymp B$ to mean $C_1 B \le A \le C_2 B$ for some absolute positive constants $C_i$, and $A \asymp_{\varepsilon} B$ means $C_i$ may depend on $\varepsilon$. The letter $\rho$ will always indicate a non-trivial zero of $\zeta$. When we differentiate a bivariate function, we always do so with respect to the first variable. We set 
\[ L(y):=\exp((\log y)^{\frac{3}{5}}(\log \log y)^{-\frac{1}{5}}).\]
\section{Preliminaries}\label{sec:prelim}
\subsection{Standard lemmas}
Recall $\beta$ was defined in \eqref{eq:betadef}.
\begin{lem}\label{lem:xilem}\cite[Lem.~1]{HildebrandTenenbaum1986}
	For $u \ge 3$ we have $\xi(u) = \log u + \log \log u + O( (\log \log u) / \log u)$. In particular,
	\begin{equation}\label{eq:ybetasize}
		y^{1-\beta} \asymp u \log(u+1), \qquad u \ge 1. 
	\end{equation}
\end{lem}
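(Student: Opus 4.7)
The proof will rest entirely on the defining identity $e^{\xi(u)}=1+u\xi(u)$. Taking logarithms and isolating $u\xi$ inside $\log(1+u\xi)$ gives the fixed-point identity
\[ \xi(u) = \log u + \log \xi(u) + \log\left(1 + \frac{1}{u\xi(u)}\right), \]
valid for $u>1$. My plan is to bootstrap from a crude size estimate for $\xi$ to the claimed two-term asymptotic using this identity.

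First I would establish the rough bound $\xi(u)\asymp\log u$ for $u\ge 3$. Rewriting the defining equation as $u=(e^{\xi}-1)/\xi$, one sees immediately that $e^{\xi}/(2\xi)\le u\le e^{\xi}/\xi$ once $\xi\ge 1$ (which holds at $u=3$ by a direct numerical check, and then for all larger $u$ by monotonicity of $\xi$). Taking logarithms pins $\xi$ between $\log u+\log\xi$ and $\log u+\log\xi+\log 2$, and one iteration yields $\xi(u)=\log u+O(\log\log u)$. Substituting this back into the fixed-point identity, I expand $\log\xi=\log\log u+O(\log\log u/\log u)$ by a single Taylor step and absorb $\log(1+1/(u\xi))=O(1/(u\log u))$; out comes the asymptotic $\xi(u)=\log u+\log\log u+O(\log\log u/\log u)$.

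For the second assertion, the definitions of $\beta$ and $\xi$ give
\[ y^{1-\beta} = y^{\xi(u)/\log y} = e^{\xi(u)} = 1 + u\xi(u). \]
For $u\ge 3$ the first part supplies $\xi(u)\asymp\log u\asymp\log(u+1)$, hence $1+u\xi(u)\asymp u\log(u+1)$. For $1\le u\le 3$, continuity of $\xi$ with $\xi(1)=0$ gives $1\le 1+u\xi(u)\ll 1$, while $u\log(u+1)\in[\log 2,3\log 4]$, and a compactness argument finishes the job. The only mildly delicate point is keeping the bootstrap honest---verifying that $\xi\ge 1$ and $u\xi\ge 1$ hold before invoking the estimates that use them---but this is a one-line check at $u=3$. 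Since the lemma is classical and attributed to Hildebrand--Tenenbaum, I do not anticipate any serious obstacle beyond bookkeeping.
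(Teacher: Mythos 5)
Your proof is correct. The paper does not actually prove this lemma --- it is quoted directly from Hildebrand--Tenenbaum \cite[Lem.~1]{HildebrandTenenbaum1986} --- and your bootstrap from the defining relation $e^{\xi(u)}=1+u\xi(u)$ is essentially the standard argument behind that citation: the crude bound $\xi\asymp\log u$ from $u=(e^{\xi}-1)/\xi$, one iteration of the fixed-point identity $\xi=\log u+\log\xi+\log(1+1/(u\xi))$ to get $\xi=\log u+O(\log\log u)$, and a second substitution to reach the stated two-term asymptotic. The deduction of \eqref{eq:ybetasize} is also sound: $y^{1-\beta}=e^{\xi(u)}=1+u\xi(u)$ is exact by the definition of $\beta$, the range $u\ge 3$ follows from $\xi(u)\asymp\log u\asymp\log(u+1)$, and on the compact range $1\le u\le 3$ both $1+u\xi(u)$ and $u\log(u+1)$ are bounded above and below by positive constants, so the ratio is $\asymp 1$; your one-line checks that $\xi(3)\ge 1$ and that $\xi$ is increasing and continuous are exactly the bookkeeping needed and present no difficulty.
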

\begin{lem}\cite{debruijn19512}\label{lem:rho size}
For $u \ge 1$ we have $\rho(u) \asymp e^{-u\xi +I(\xi)} u^{-1/2} = x^{\beta-1} e^{I(\xi)}u^{-1/2}$.
\end{lem}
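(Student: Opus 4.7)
The plan is to obtain both directions of the bound simultaneously via the saddle-point method applied to the Laplace inversion of $\rho$. Since $\rho$ decays super-exponentially, \eqref{eq:hat rho} inverts to
\[ \rho(u) = \frac{1}{2\pi i}\int_{(c)} e^{\gamma + I(-s) + us}\,ds \]
for any $c \in \RR$. Setting $\phi(s) := I(-s) + us$, the stationary condition $\phi'(s) = (e^{-s}-1)/s + u = 0$ becomes $e^{\xi} = 1 + u\xi$ under the substitution $s = -\xi$; the unique real saddle therefore sits at $s = -\xi(u)$, and I would shift the contour to $\Re s = -\xi$. One reads off $\phi(-\xi) = I(\xi) - u\xi$.

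For the local analysis, a direct computation using the saddle equation gives
\[ \phi''(-\xi) = \frac{1 + (\xi-1)e^{\xi}}{\xi^2} = \frac{u\xi - u + 1}{\xi}, \]
which by Lemma \ref{lem:xilem} is $\asymp u$ for $u \ge 2$ and of constant order for $u$ in the fixed interval $[1,2]$. Writing
\[ \rho(u) = \frac{e^{\gamma + I(\xi) - u\xi}}{2\pi}\int_{-\infty}^{\infty} e^{\phi(-\xi + it) - \phi(-\xi)}\,dt, \]
I would split the integral into a central range $|t| \le T$ with $T$ a small positive power of $u+1$, and tails $|t| > T$. On the central range, Taylor expansion yields $\phi(-\xi+it) - \phi(-\xi) = -\tfrac{1}{2}\phi''(-\xi) t^2 + O(|t|^3|\phi'''(-\xi)|)$, producing a Gaussian contribution of order $(\phi''(-\xi))^{-1/2} \asymp u^{-1/2}$.

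The main technical step is the tail bound for $|t| > T$. Parametrizing the vertical segment from $-\xi$ to $-\xi + it$ and computing
\[ \Re(I(\xi - it) - I(\xi)) = \int_0^t \frac{(e^{\xi}\cos s - 1)s - \xi e^{\xi}\sin s}{\xi^2 + s^2}\,ds, \]
the non-oscillatory piece $\int_0^t -s/(\xi^2+s^2)\,ds = -\tfrac{1}{2}\log(1 + t^2/\xi^2)$ supplies logarithmic decay, while the oscillatory piece is controlled by integration by parts and is bounded uniformly in $t$. This yields $|e^{\phi(-\xi+it) - \phi(-\xi)}| \le (1 + t^2/(u+1))^{-c}$ outside a neighbourhood of the saddle, making the tail contribution negligible against the Gaussian main term. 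Combining the two pieces gives $\rho(u) \asymp e^{I(\xi) - u\xi} u^{-1/2}$ for $u$ sufficiently large; the remaining bounded range $u \in [1,U_0]$ follows from continuity and positivity of both sides.

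Finally, the equivalent form $x^{\beta - 1} e^{I(\xi)} u^{-1/2}$ is immediate from the definitions: $\beta = 1 - \xi(u)/\log y$ and $u = \log x/\log y$ together give $u\xi = (1-\beta)\log x$, whence $e^{-u\xi} = x^{\beta-1}$. The hard part of the argument is the tail estimate, where the slow (merely logarithmic) decay of $|\hat{\rho}(-\xi + it)|$ in $t$ forces one to exploit the specific path structure of $I$ rather than any blanket pointwise bound; the Gaussian local analysis and the shift of contour are standard once the saddle has been correctly located via the defining equation of $\xi$.
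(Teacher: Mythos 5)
Your overall saddle-point strategy is the classical one (the paper does not prove this lemma; it cites de Bruijn, whose argument is exactly of this type), and your local analysis is correct: the saddle is at $s=-\xi(u)$, $\phi(-\xi)=I(\xi)-u\xi$, and $\phi''(-\xi)=(u\xi-u+1)/\xi\asymp u$ for large $u$, giving a central contribution $\asymp u^{-1/2}$; the identity $e^{-u\xi}=x^{\beta-1}$ is also fine. The genuine gap is in the tail estimate, which you yourself flag as the hard part. First, the claim that the oscillatory part of
\[
\Re\bigl(I(\xi-it)-I(\xi)\bigr)=\int_0^t \frac{(e^{\xi}\cos s-1)s-\xi e^{\xi}\sin s}{\xi^2+s^2}\,ds
\]
is ``bounded uniformly in $t$'' is false: that part carries the factor $e^{\xi}=1+u\xi$, and integration by parts only yields $e^{\xi}\cdot O(1/\xi)=O(u)$ for it, not $O(1)$ (indeed, for $|t|\asymp e^{\xi}$ one has $\Re I(\xi-it)\approx-\log|t|$, so the oscillatory part is genuinely of size $\asymp u$). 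Second, even granting the asserted majorant $(1+t^2/(u+1))^{-c}$ for $|t|\ge T$ with $T$ a small power of $u$, it is far too weak: on $T\le|t|\le\sqrt{u}$ it is $\asymp 1$, so its integral over the tail is $\gg\sqrt{u}$, which swamps the Gaussian main term $\asymp u^{-1/2}$; to get the two-sided bound (in particular the lower bound) the tail must be $O(u^{-1/2})$ with a small constant. Third, your central range cannot be a positive power of $u$: the cubic error $O(|t|^3|\phi'''|)$ with $\phi'''\asymp u$ controls the expansion only for $|t|\ll u^{-1/3}$.

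What is actually needed in the intermediate range is exponential decay, i.e.\ precisely the bounds of Lemma \ref{lem:i bounds}: $|\hat{\rho}(-\xi+it)|\ll\exp(I(\xi)-t^2u/(2\pi^2))$ for $|t|\le\pi$ and $\ll\exp(I(\xi)-u/(\pi^2+\xi^2))$ for $|t|\ge\pi$. These follow not from your path decomposition but from the identity $I(s)=\int_0^1\frac{e^{sv}-1}{v}\,dv$, which gives $\Re I(\xi-it)-I(\xi)=\int_0^1 e^{\xi v}\,\frac{\cos(tv)-1}{v}\,dv\le 0$ and then a cosine inequality. The flat bound, integrated over $\pi\le|t|\le Ce^{\xi}$, contributes $\ll u\log(u+1)\,e^{I(\xi)}\exp(-u/(\pi^2+\xi^2))$, which is negligible against $e^{I(\xi)}u^{-1/2}$. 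For $|t|\gg e^{\xi}$ no pointwise majorant of your form can work, because $|\hat{\rho}(-\xi+it)|\sim 1/|t|$ and the inversion integral converges only conditionally; there one uses $\hat{\rho}(-\xi+it)=1/(-\xi+it)+O(e^{\xi}/t^2)$ and integrates the $1/s$ term by parts against $e^{iut}$, getting $O(e^{-u\xi}/(ue^{\xi}))$, while the error term contributes $O(e^{-u\xi})$; both are negligible since $I(\xi)\asymp u$. With these replacements your outline closes and becomes essentially de Bruijn's (and Hildebrand's) proof; as written, the stated tail majorant is both unproven and insufficient.
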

In the next lemmas we write $s \in \CC$ as $s=\sigma + it$.
\begin{lem}\cite[Cor.~10.5]{MV}\label{lem:functional}
	For $|\sigma| \le A$ and $|t| \ge 1$, $|\zeta(s)| \asymp_A (|t|+4)^{1/2-\sigma}|\zeta(1-s)|$.
\end{lem}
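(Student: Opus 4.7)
The target is a standard consequence of the functional equation for $\zeta$, which reads
\[
\zeta(s) = \chi(s)\zeta(1-s), \qquad \chi(s) = 2^s \pi^{s-1} \sin(\pi s/2)\Gamma(1-s).
\]
Thus it suffices to show that $|\chi(s)| \asymp_A (|t|+4)^{1/2-\sigma}$ uniformly in the strip $|\sigma|\le A$, $|t|\ge 1$. My plan is to estimate each factor of $\chi(s)$ separately and combine.

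First, $|2^s\pi^{s-1}| = 2^\sigma \pi^{\sigma-1}$, which is bounded above and below by positive constants depending only on $A$. Next, using $|\sin(\pi s/2)|^2 = \sin^2(\pi\sigma/2)+\sinh^2(\pi t/2)$, the inequality $|t|\ge 1$ gives $|\sin(\pi s/2)| \asymp e^{\pi|t|/2}$, with implied constants absolute. For the gamma factor I would invoke Stirling's formula in the form
\[
\log\Gamma(w) = \left(w-\tfrac12\right)\log w - w + \tfrac12\log(2\pi) + O(1/|w|)
\]
valid in any sector $|\arg w|\le \pi - \delta$. Applying this at $w=1-s$ and taking real parts, a short computation (with $\log|1-s| = \log|t| + O_A(1/|t|)$ and $\arg(1-s) = \mathrm{sgn}(-t)\pi/2 + O_A(1/|t|)$) yields
\[
|\Gamma(1-s)| \asymp_A (|t|+1)^{1/2-\sigma} e^{-\pi|t|/2}, \qquad |\sigma|\le A,\ |t|\ge 1.
\]
Multiplying the three estimates the $e^{\pm\pi|t|/2}$ factors cancel, producing $|\chi(s)| \asymp_A (|t|+1)^{1/2-\sigma}$, and the restriction $|t|\ge 1$ together with $|\sigma|\le A$ lets one replace $(|t|+1)^{1/2-\sigma}$ by $(|t|+4)^{1/2-\sigma}$ at the cost of an $A$-dependent constant.

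The only genuine obstacle is booking the Stirling estimate correctly in a vertical strip of bounded real part: one has to keep the $\sigma$-dependence inside the exponent explicit (since it shows up as the exponent of $|t|$) while absorbing the additive $O_A(1)$ terms into the implied constant. Once that is done, every other step is a direct inequality. I would therefore present the argument in three one-line estimates for the three factors of $\chi(s)$, followed by a single multiplication.
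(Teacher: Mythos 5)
Your proposal is correct and coincides with the standard argument behind this statement: the paper does not prove the lemma but cites it from Montgomery--Vaughan (Cor.~10.5), whose proof is exactly what you describe, namely writing $\zeta(s)=\chi(s)\zeta(1-s)$ and showing $|\chi(s)|\asymp_A(|t|+4)^{1/2-\sigma}$ by combining the trivial bound for $2^s\pi^{s-1}$, the estimate $|\sin(\pi s/2)|\asymp e^{\pi|t|/2}$, and Stirling's formula for $\Gamma(1-s)$ in a vertical strip. Your bookkeeping of the $\sigma$-dependence in the Stirling step and the passage from $|t|^{1/2-\sigma}$ to $(|t|+4)^{1/2-\sigma}$ for $|t|\ge 1$ are both sound, so there is nothing to correct.
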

\begin{lem}\cite[Cor.~1.17]{MV}\label{lem:convex}
	Fix $\varepsilon>0$. For $\sigma \in [\varepsilon,2]$ and $|t| \ge 1$ we have
	\[\zeta(s) \ll_{\varepsilon} (1+(|t|+4)^{1-\sigma})\min\left\{\frac{1}{|\sigma-1|},\log (|t|+4)\right\}.\]
\end{lem}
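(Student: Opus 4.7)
The bound is a classical convexity estimate for $\zeta$, which I would reconstruct via an Euler--Maclaurin truncation. Starting from the identity
\[ \zeta(s) = \sum_{n \le N} n^{-s} - \frac{N^{1-s}}{1-s} - s\int_N^{\infty} \{u\}\, u^{-s-1}\, du \qquad (\sigma >0,\ N\in\ZZ_{\ge 1}), \]
valid by partial summation and analytic continuation, I would specialize $N = \lfloor |t|+4 \rfloor$ so the three pieces are balanced in $|t|$.

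First I would dispose of the two tail pieces. The integral is bounded by $|s|\int_N^\infty u^{-\sigma-1}\, du \le |s|/(\sigma N^\sigma)$; since $\sigma \ge \varepsilon$ and $|s| \le |t|+2$, this is $\ll_\varepsilon N^{1-\sigma} \asymp_\varepsilon (|t|+4)^{1-\sigma}$, which is comfortably absorbed into the right-hand side of the lemma. The boundary term satisfies $|N^{1-s}/(1-s)| = N^{1-\sigma}/|s-1| \le (|t|+4)^{1-\sigma}/|t|$, which is even smaller since $|t|\ge 1$.

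The core step is bounding the Dirichlet sum: $\bigl|\sum_{n\le N} n^{-s}\bigr| \le \sum_{n\le N} n^{-\sigma}$, which I would estimate by comparison with $\int_1^N u^{-\sigma}\, du$ to get
\[ \sum_{n\le N} n^{-\sigma} \ll (1+N^{1-\sigma})\min\Bigl\{\tfrac{1}{|\sigma-1|},\, \log N\Bigr\} \qquad (\sigma\in[\varepsilon,2]). \]
For $\sigma>1$ this reduces to $\zeta(\sigma) \le 1+1/(\sigma-1)$; for $\sigma<1$ one gets $N^{1-\sigma}/(1-\sigma)+O(1)$; and in the crossover window $|\sigma-1|\le 1/\log N$ both $N^{1-\sigma}$ and $1/|\sigma-1|$ are $O(\log N)$, giving the sum bound $\log N + O(1)$. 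Combining these with $N \asymp |t|+4$ and the two auxiliary bounds above yields the stated inequality.

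The only genuine care-point is the $\sigma\to 1$ crossover, where one must check that the $\min$ on the right-hand side smoothly interpolates between the $1/|\sigma-1|$ and $\log(|t|+4)$ regimes; this is handled automatically by the three-case split above. I do not anticipate any serious obstacle, since this is a textbook computation; indeed, the lemma is quoted from Montgomery--Vaughan precisely because the argument is standard.
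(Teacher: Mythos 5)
Your proof is correct: the truncated identity $\zeta(s)=\sum_{n\le N}n^{-s}+\frac{N^{1-s}}{s-1}-s\int_N^\infty\{u\}u^{-s-1}\,du$ with $N\asymp|t|+4$, together with the elementary estimate for $\sum_{n\le N}n^{-\sigma}$ and the observation that the $\min$ factor is $\ge 1$ on $[\varepsilon,2]$ (so the tail and boundary terms are absorbed), is exactly the standard argument. The paper does not prove this lemma at all — it quotes it from Montgomery--Vaughan, Cor.~1.17 — so there is nothing to diverge from; note only that your starting identity is the same one the paper itself borrows (Titchmarsh, Thm.~4.11) in the proof of its Lemma~\ref{lem:zeta}, so your reconstruction is fully in the spirit of the paper's toolkit.
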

\begin{lem}\cite[Thm.~7.2(A)]{Titchmarsh1986}\label{lem:second}
	We have, for $\sigma \in [1/2,2]$ and $T \ge 2$,
	\[ \int_{1}^{T} |\zeta(\sigma+it)|^2 dt \ll T \min\left\{ \log T, \frac{1}{\sigma-\frac{1}{2}}\right\}.\]
\end{lem}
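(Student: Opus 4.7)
The plan is to reduce the mean square of $\zeta(\sigma+it)$ on the segment $[1,T]$ to that of a short Dirichlet polynomial, and then apply a mean-value estimate for such polynomials. First, I would record the standard Euler--Maclaurin (or Abel summation) approximation, valid uniformly for $\sigma \in [1/2,2]$ and $1 \le |t| \le T$:
\[
\zeta(\sigma+it) = \sum_{n \le T} n^{-\sigma-it} - \frac{T^{1-\sigma-it}}{1-\sigma-it} + O(T^{-\sigma}).
\]
The middle term has squared modulus $\ll T^{2-2\sigma}/(1+t^2)$, contributing $\ll T^{2-2\sigma} \le T$ after integration over $t \in [1,T]$, while the $O(T^{-\sigma})$ term contributes $\ll T^{1-2\sigma} \le 1$. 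Both errors are dominated by the target bound, which is $\gg T$ throughout the relevant range. It therefore suffices to bound $\int_1^T |D(t)|^2\, dt$ for $D(t) := \sum_{n \le T} n^{-\sigma-it}$.

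For this Dirichlet polynomial I would invoke the Montgomery--Vaughan mean-value theorem with coefficients $a_n = n^{-\sigma}$, which yields
\[
\int_0^T |D(t)|^2\, dt = \sum_{n \le T} n^{-2\sigma}(T + O(n)) = T \sum_{n \le T} n^{-2\sigma} + O\Big(\sum_{n \le T} n^{1-2\sigma}\Big).
\]
The error sum is bounded by $T$ uniformly for $\sigma \in [1/2,2]$ (the worst case being $\sigma = 1/2$). For the main factor I would use two complementary bounds: the trivial one $\sum_{n \le T} n^{-2\sigma} \le \sum_{n \le T} n^{-1} \ll \log T$, and (for $\sigma > 1/2$) the sharper $\sum_{n \le T} n^{-2\sigma} \le \zeta(2\sigma) \ll 1/(\sigma-1/2)$, which follows from the simple pole of $\zeta$ at $s=1$. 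Taking the minimum produces the desired $T \min\{\log T, 1/(\sigma-1/2)\}$, and the $O(T)$ error is absorbed since this minimum is $\gg 1$ for $T \ge 2$.

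The only real technical ingredient is the Montgomery--Vaughan mean-value theorem, whose proof requires a nontrivial pairing/Hilbert-inequality argument. If I wanted a self-contained route I would expand $|D(t)|^2$ as a double sum, evaluate the diagonal exactly as $T \sum_n n^{-2\sigma}$, and bound the off-diagonal contributions via $|\int_1^T (n/m)^{it}\, dt| \ll 1/|\log(n/m)|$ combined with a dyadic partition in the ratio $n/m$. This would give an off-diagonal bound of $O(T^{2-2\sigma}\log T)$, which a short case check shows is dominated by $T\min\{\log T,1/(\sigma-1/2)\}$ throughout $\sigma \in [1/2,2]$ (the mildly delicate case being $\sigma$ just above $1/2$, handled by comparing $T^{-2\delta}\log T$ with $1/\delta$ for $\delta = \sigma-1/2$).
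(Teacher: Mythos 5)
Your argument is correct: the approximation $\zeta(\sigma+it)=\sum_{n\le T}n^{-\sigma-it}-\frac{T^{1-\sigma-it}}{1-\sigma-it}+O(T^{-\sigma})$ is valid for $\sigma\in[1/2,2]$, $1\le|t|\le T$, the Montgomery--Vaughan step gives $T\sum_{n\le T}n^{-2\sigma}+O(T)$ since each $n^{1-2\sigma}\le 1$, and the two bounds $\sum_{n\le T}n^{-2\sigma}\ll\log T$ and $\sum_{n\le T}n^{-2\sigma}\le\zeta(2\sigma)\ll 1/(\sigma-\tfrac12)$ give exactly the stated estimate, with the $O(T)$ errors absorbed because the right-hand side is $\gg T$. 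The paper does not prove this lemma itself --- it is quoted from Titchmarsh, Thm.~7.2(A) --- and your proof is essentially the classical one found there, with the Montgomery--Vaughan mean value theorem playing the role of Titchmarsh's older mean value theorem for Dirichlet polynomials.
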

\begin{lem}\cite[Lem.~2.7]{Hildebrand1993}\label{lem:i bounds}
	The following bounds hold for $s=-\xi(u)+it$:
	\begin{equation}
		\hat{\rho}(s) =e^{\gamma+I(-s)}= \begin{cases} O\left(\exp\left(I(\xi)-\frac{t^2u}{2\pi^2}\right)\right) & \mbox{if }|t| \le \pi,\\
			O\left(\exp\left(I(\xi)-\frac{u}{\pi^2+\xi^2}\right)\right) & \mbox{if }|t| \ge \pi,\\
			\frac{1}{s} + O\left( \frac{1+u \xi }{|s|^2} \right) & \mbox{if }1+u\xi=O(|t|).\end{cases}
	\end{equation}
\end{lem}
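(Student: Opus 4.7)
The identity $\hat\rho(s)=\exp(\gamma+I(-s))$ from \eqref{eq:hat rho} means that for $s=-\xi+it$ we have $|\hat\rho(s)|=\exp(\gamma+\Re I(\xi-it))$, so bounds (1) and (2) reduce to upper estimates of $\Re I(\xi-it)-I(\xi)$. Parametrizing $v=r(\xi-it)$ for $r\in[0,1]$ in the definition of $I(\xi-it)$ gives the manifestly nonpositive identity
\[
\Re I(\xi-it)-I(\xi) = -\int_0^1 e^{r\xi}\,\frac{1-\cos(rt)}{r}\,dr,
\]
and both bounds reduce to controlling this integral from below.

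For bound (1), $|t|\le\pi$: I would use the elementary inequality $1-\cos(rt)\ge 2(rt)^2/\pi^2$, valid on $[0,1]$ since $|rt|\le\pi$. The integral is then bounded below by $(2t^2/\pi^2)\int_0^1 re^{r\xi}dr = (2t^2/\pi^2)\,((\xi-1)e^\xi+1)/\xi^2$. Substituting $e^\xi=1+u\xi$ identifies the second factor as $u+(1-u)/\xi$, and an elementary verification shows this is $\ge u/4$ for all admissible $u\ge 1$. This yields the stated saving $t^2 u/(2\pi^2)$.

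For bound (2), $|t|\ge\pi$: I would split at $r=\pi/|t|$. On $[0,\pi/|t|]$ the inequality from (1) still applies and already contributes a saving of order $1$ when $|t|\ge\pi\xi$ and of order $u$ when $|t|$ is close to $\pi$. On $[\pi/|t|,1]$ I would isolate the positive contribution $\int_{\pi/|t|}^1 e^{r\xi}/r\,dr=\operatorname{Ei}(\xi)-\operatorname{Ei}(\pi\xi/|t|)$ (of size $\asymp u$ when $|t|\gg\xi$) and control the oscillatory remainder $\int_{\pi/|t|}^1 e^{r\xi}\cos(rt)/r\,dr$ by integration by parts, gaining a factor $|t|^{-1}$. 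A case analysis depending on whether $|t|\lessgtr\xi$ produces, after reconciling the two regions, a uniform saving of order $u/(\pi^2+\xi^2)$.

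For bound (3), $1+u\xi=O(|t|)$: I would deform the contour in $I(\xi-it)=\int_0^{\xi-it}(e^v-1)/v\,dv$ to pass vertically from $0$ to $-it$ and then horizontally to $\xi-it$. The vertical leg evaluates via sine and cosine integral asymptotics to
\[
I(-it) = -\gamma-\log|t|-i\tfrac{\pi}{2}\operatorname{sign}(t)+O(1/|t|).
\]
On the horizontal leg, the Taylor expansion $1/(x-it)=i/t+x/t^2+O(\xi^2/|t|^3)$ (valid since $\xi\ll|t|$ in this regime) and integration term by term produces
\[
\int_0^\xi\frac{e^{x-it}-1}{x-it}\,dx = (i/t)(e^{-it}u\xi-\xi)+O\!\left((1+u\xi)\xi^2/|t|^2\right).
\]
The crucial observation is the cancellation with $\log s = \log|t|+i\tfrac{\pi}{2}\operatorname{sign}(t)+i\xi/t+O(\xi^2/t^2)$: writing $A:=\gamma+I(\xi-it)$, the $\log|t|$, $i\tfrac{\pi}{2}\operatorname{sign}(t)$, and $-i\xi/t$ contributions all cancel, leaving
\[
A+\log s = O(1/|t|)+(i/t)e^{-it}u\xi+O(e^\xi/|t|^2) = O\!\left((1+u\xi)/|t|\right),
\]
which is bounded under the hypothesis. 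Exponentiating, $\hat\rho(s)=e^A=(1/s)\,e^{A+\log s}=(1/s)(1+O((1+u\xi)/|t|))=1/s+O((1+u\xi)/|s|^2)$, using $|s|\asymp|t|$.

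The main obstacle I anticipate is bound (2): the transition between the regimes $|t|\le\xi$ and $|t|\ge\xi$ requires careful bookkeeping to verify that the saving $u/(\pi^2+\xi^2)$ is realized uniformly, since neither of the two asymptotics (Gaussian from small $r$, averaged positivity from large $r$) dominates throughout. Bounds (1) and (3) reduce, once the setup is in place, to short explicit computations; the subtle step in (3) is recognizing the cancellation of the first-order corrections of horiz with those of $\log s$, without which naive exponentiation would lose the claimed precision.
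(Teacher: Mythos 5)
Your setup (reducing (1) and (2) to lower bounds for $\int_0^1 e^{r\xi}(1-\cos rt)\,r^{-1}dr$) is correct, part (1) is complete as written, and your contour computation for part (3) is sound — it is a hands-on version of the standard identity $\gamma+I(-s)=-\log s-\int_s^{\infty}e^{-v}v^{-1}dv$, and the cancellation you highlight against $\log s$ is exactly the point. (The paper itself does not prove this lemma; it cites Hildebrand--Tenenbaum, Lem.~2.7.)

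Part (2), however, has a genuine gap, and it is precisely where you flagged doubt: in the range $\pi<|t|\ll\xi$ (i.e.\ $u$ large, $|t|$ of moderate size) neither of your two mechanisms yields the required saving $u/(\pi^2+\xi^2)\asymp e^{\xi}/\xi^{3}$. On $[0,\pi/|t|]$ your Gaussian inequality gives at most about $(t^{2}/\xi^{2})\,e^{\pi\xi/|t|}$, which is exponentially smaller than $e^{\xi}/\xi^{3}$ once $|t|\ge\pi(1+C\log\xi/\xi)$; and on $[\pi/|t|,1]$ integration by parts costs a factor $\xi/|t|\ge 1$ from differentiating $e^{r\xi}$, so the advertised $|t|^{-1}$ gain evaporates. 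Indeed for, say, $|t|=2\pi$ and $\xi$ large the oscillatory integral $\int_{\pi/|t|}^{1}e^{r\xi}\cos(rt)\,r^{-1}dr$ is genuinely of size $\asymp u$, the same order as your positive term $\operatorname{Ei}(\xi)-\operatorname{Ei}(\pi\xi/|t|)$, because the mass of $e^{r\xi}$ sits in a $1/\xi$-neighbourhood of $r=1$ where $\cos(rt)$ is essentially constant; the true saving $\asymp u/\xi^{2}$ comes from the quadratic vanishing of $1-\cos(rt)$ near $r=1$, which no first-order integration by parts detects. So the ``reconciliation'' you defer is not bookkeeping: the plan as stated fails there. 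The standard repair is a one-line positivity trick that avoids the split altogether: since $r^{-1}\ge 1$ on $(0,1]$,
\[
\int_0^1 e^{r\xi}\,\frac{1-\cos(rt)}{r}\,dr \;\ge\; \int_0^1 e^{r\xi}\bigl(1-\cos(rt)\bigr)\,dr \;=\; u-\Re\frac{e^{\xi-it}-1}{\xi-it} \;\ge\; u-\frac{2+u\xi}{\sqrt{\xi^2+\pi^2}} \;\ge\; \frac{\pi^2 u}{2(\pi^2+\xi^2)}-O(1),
\]
using $e^{\xi}=1+u\xi$, $|t|\ge\pi$ and $1-\xi/\sqrt{\xi^2+\pi^2}\ge\pi^2/\bigl(2(\xi^2+\pi^2)\bigr)$; this is uniform in $|t|\ge\pi$ and gives (2) immediately. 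With that substitution your write-up of the lemma is complete.
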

The third case of Lemma \ref{lem:i bounds} is usually stated in the range $1+u\xi \le |t|$, but the same proof works for $1+u\xi = O(|t|)$. Since $1+u\xi=e^{\xi}$, the third case can also be written as
\begin{equation}\label{eq:third} s\hat{\rho}(s) = 1+O( e^{-\sigma}/|t|)
\end{equation}
for $s=\sigma+it$, assuming $\sigma <0$ and $e^{-\sigma} =O(|t|)$. The following lemma is a variant of \cite[Lem.~8]{HildebrandTenenbaum1986}, proved in the same way. 
\begin{lem}\cite{HildebrandTenenbaum1986}\label{lem:zetaratio}
	Fix $\varepsilon>0$. Suppose $x \ge y \ge (\log x)^{1+\varepsilon}$ and $x \ge C_{\varepsilon}$. 	For $|t| \le 1/\log y$,
	\[ \left|\frac{\zeta(\betasad+it,y)}{\zeta(\betasad,y)}\right| \le \exp(-ct^2 (\log x )(\log y) ).\]
	For $1/\log y \le |t| \le \exp((\log y)^{3/2-\varepsilon})$,
	\begin{equation}\label{eq:zetar}
		\frac{\zeta(\betasad+it,y)}{\zeta(\betasad,y)} \ll_{\varepsilon} \exp\left(-\frac{c ut^2}{(1-\betasad)^2+t^2}\right).
	\end{equation}
\end{lem}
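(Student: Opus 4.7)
The strategy follows the blueprint of Hildebrand--Tenenbaum's Lemma 8 in \cite{HildebrandTenenbaum1986}. Taking real parts of the Euler product gives
\[
-\log\left|\frac{\zeta(\beta+it,y)}{\zeta(\beta,y)}\right| = \sum_{p \le y}\sum_{k \ge 1}\frac{1-\cos(kt\log p)}{k p^{k\beta}} =: S(t),
\]
and every summand is non-negative. Since $\beta \ge c_\varepsilon>0$ by Lemma \ref{lem:xilem}, the terms with $k \ge 2$ contribute a quantity bounded by $\sum_{p,k\ge 2} p^{-k\beta}/k = O(1)$, which is negligible compared to the targets in both ranges. It therefore suffices to bound from below
\[
S_1(t) := \sum_{p \le y}\frac{1-\cos(t\log p)}{p^{\beta}}.
\]

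\textbf{Range 1: $|t| \le 1/\log y$.} For $p\le y$ one has $|t\log p|\le 1$, so the elementary inequality $1-\cos\theta\ge \theta^2/4$ for $|\theta|\le 1$ yields $S_1(t) \ge (t^2/4)\sum_{p\le y}(\log p)^2 p^{-\beta}$. Partial summation and the Prime Number Theorem give
\[
\sum_{p \le y}\frac{(\log p)^2}{p^\beta} \asymp \int_2^{y}\frac{\log v}{v^\beta}dv \asymp \frac{y^{1-\beta}\log y}{1-\beta}.
\]
Feeding in $y^{1-\beta}\asymp u\log(u+1)$ and $1-\beta=\xi(u)/\log y\asymp \log(u+1)/\log y$ from Lemma \ref{lem:xilem} turns this into $\asymp u(\log y)^2 = (\log x)(\log y)$. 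Combining gives the first bound.

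\textbf{Range 2: $1/\log y \le |t| \le \exp((\log y)^{3/2-\varepsilon})$.} By partial summation together with the Prime Number Theorem in the form $\pi(v) = \mathrm{Li}(v) + O(v/L(v))$ (Vinogradov--Korobov), we replace $S_1(t)$ by the integral
\[
\int_{2}^{y}\frac{1-\cos(t\log v)}{v^{\beta}\log v}dv
\]
up to an admissible error; the condition $|t|\le \exp((\log y)^{3/2-\varepsilon})$ is precisely what keeps this error smaller than the target lower bound, because multiplying the PNT error by $|t|$ (and the range length) produces a factor which is acceptable only in this range. The substitution $w=(1-\beta)\log v$ recasts the integral as a Laplace-type expression $\int_{0}^{(1-\beta)\log y} e^{w}(1-\cos(tw/(1-\beta)))\,dw/w$, for which the elementary bound $1-\cos\theta \gg \theta^2/(1+\theta^2)$ yields
\[
\int_{2}^{y}\frac{1-\cos(t\log v)}{v^{\beta}\log v}dv \gg \frac{y^{1-\beta}}{\log y}\cdot \frac{t^2}{(1-\beta)^2+t^2} \asymp \frac{ut^2}{(1-\beta)^2+t^2},
\]
using $y^{1-\beta}/\log y \asymp u\log(u+1)/\log y$ and $\log(u+1)/\xi(u)\asymp 1$. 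Exponentiating gives \eqref{eq:zetar}.

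\textbf{Main obstacle.} The delicate point is Range 2 and in particular the two sub-regimes $|t|\lesssim 1-\beta$ (saddle-like: the integrand is positive and concentrates near $v=y$, where $e^{w}$ is largest) versus $|t|\gg 1-\beta$ (where genuine cancellation in the cosine must be captured with the right rate). The endpoint $\exp((\log y)^{3/2-\varepsilon})$ is dictated by the Vinogradov--Korobov zero-free region: beyond it the PNT error begins to overwhelm the lower bound on the smooth integral.
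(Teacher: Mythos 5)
The paper does not prove this lemma from scratch: it cites Hildebrand--Tenenbaum's Lemma 8 and states that the variant is ``proved in the same way,'' so the relevant comparison is with that argument, whose skeleton (Euler product, non-negativity of $\sum_{p,k}(1-\cos(kt\log p))/(kp^{k\betasad})$, reduction to $k=1$, split at $|t|=1/\log y$) you reproduce correctly, and your Range 1 is essentially fine apart from the assertion $1-\betasad\asymp\log(u+1)/\log y$, which fails as $u\to 1$ (there $\xi(u)\to 0$); in that corner one should instead note directly that $\sum_{p\le y}(\log p)^2p^{-\betasad}\asymp(\log y)^2\asymp(\log x)(\log y)$. The genuine gap is your error management in Range 2. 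Partial summation against $\pi(v)=\mathrm{Li}(v)+E(v)$ with the \emph{uniform} bound $E(v)\ll v/L(v)$ yields, after integrating by parts, an error of order $|t|\int_2^y v^{-\betasad}L(v)^{-1}dv\asymp |t|\,y^{1-\betasad}(\log y)/(\xi L(y))$, while the target lower bound can be as small as $\asymp u/(1+\xi^2)$. Comparing the two forces $|t|\ll L(y)$ up to logarithmic factors; for $|t|$ anywhere near $\exp((\log y)^{3/2-\varepsilon})$, say $|t|=\exp((\log y)^{0.7})$, your error exceeds the main term by a huge factor and the argument collapses. The exponent $3/2$ does not come from ``multiplying the pointwise PNT error by $|t|$'': it comes from the $t$-dependent Vinogradov--Korobov zero-free region, i.e.\ one estimates $\sum_{p\le y}p^{-\betasad-it}$ by a contour shift (or explicit formula) to width $\asymp(\log(|t|+3))^{-2/3}(\log\log(|t|+3))^{-1/3}$, which produces a relative saving $\exp\bigl(-c\log y/(\log(|t|+3))^{2/3+o(1)}\bigr)$, nontrivial exactly when $\log|t|\le(\log y)^{3/2-\varepsilon}$. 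This $t$-dependent input is the missing idea; as written your proof covers only $|t|\lesssim L(y)$.

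A secondary flaw: the inequality $1-\cos\theta\gg\theta^2/(1+\theta^2)$ is false pointwise (the left side vanishes at $\theta=2\pi k$, $k\ne 0$), so it cannot be inserted under the integral sign. The integral lower bound you want is nevertheless true, and can be obtained either by exploiting oscillation, namely $\int_2^y v^{-\betasad-it}\,dv/\log v\approx y^{1-\betasad-it}/((1-\betasad+it)\log y)$, whose modulus is smaller than the non-oscillatory integral by the factor $(1+\kappa^2)^{-1/2}$ with $\kappa=t/(1-\betasad)$, followed by $1-(1+\kappa^2)^{-1/2}\gg\kappa^2/(1+\kappa^2)$; or by the elementary observation that on a positive proportion of the window $w\in[\xi-1,\xi]$, which carries a positive proportion of the mass of $e^w\,dw/w$, one has $\operatorname{dist}(\kappa w,2\pi\ZZ)\gg\min\{1,\kappa\}$, hence $1-\cos(\kappa w)\gg\min\{1,\kappa^2\}$ there. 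With these two repairs (the oscillatory estimate for the main term and the zero-free-region treatment of the prime-sum error) your outline becomes the Hildebrand--Tenenbaum proof the paper invokes.
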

\subsection{More on \texorpdfstring{$G$}{G}}\label{sec:deduccor}
\begin{lem}\label{lem:logg1formula}\cite{gorodetsky2022smooth}
Fix $0 \le i \le 4$. Let $y \ge 4$. Let $s \in \CC$ with $\Re s \in [0,1]$ and the property that
\begin{equation}\label{eq:d}
\min_{\zeta(\rho)=0,\, t \ge 0} |\rho-s-t| \gg 1.
\end{equation}
Then for $T \ge 3+|\Im s|$  we have
	\begin{equation}
		(\log G_1)^{(i)}(s,y)= -\sum_{|\Im (\rho-s)| \le T}\frac{d^{i}}{d s^{i}} \int_{0}^{\infty} \frac{y^{\rho-s-t}}{\rho-s-t}dt+O\left((\log y)^{i} y^{-\Re s}+ \frac{\log^2 (yT)(\log y)^{i-1} }{T}  y^{1-\Re s}\right).
\end{equation}
\end{lem}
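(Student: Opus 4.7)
The plan is to obtain a closed-form expression for $\Pi_s(y) := \sum_{n \le y}\Lambda(n)/(n^s\log n)$ via the truncated Riemann--von Mangoldt explicit formula for $\Pi_0(t) := \sum_{n \le t}\Lambda(n)/\log n$, and to show that the ``constants'' so produced combine precisely with the subtracted terms $\log((s-1)\zeta(s)) + \log\log y + \gamma + I((1-s)\log y)$ in the definition of $\log G_1$, leaving the claimed sum over zeros plus the error. First I would use Stieltjes summation to write
\[ \Pi_s(y) = y^{-s}\Pi_0(y) + s\int_1^y \Pi_0(t)\, t^{-s-1}\,dt, \]
and then substitute the truncated explicit formula $\Pi_0(t) = \mathrm{li}(t) - \sum_{|\Im \rho|\le T'}\mathrm{li}(t^\rho) + O(t\log^2(tT')/(T'\log t) + 1)$ with $T' := T + |\Im s|$ so that the zero-constraint becomes $|\Im(\rho-s)| \le T$. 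An integration-by-parts identity, verified by matching derivatives in $y$, gives for any $\alpha \in \CC$
\[ y^{-s}\mathrm{li}(y^\alpha) + s\int_1^y \mathrm{li}(t^\alpha)\, t^{-s-1}\,dt = \mathrm{li}(y^{\alpha-s}) + c(\alpha, s), \]
with $c(\alpha, s) = \log(\alpha/(\alpha - s))$ a $y$-independent constant.

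The central analytic identity is $\mathrm{Ei}(z) = \gamma + \log z + I(z)$, equivalently $\mathrm{li}(y^{1-s}) = \gamma + \log(1-s) + \log\log y + I((1-s)\log y)$. Specializing the integration-by-parts identity to $\alpha = 1$, this identity matches the constants $\gamma + \log\log y + I((1-s)\log y)$ in the definition of $\log G_1$ precisely, and the residual $s$-dependent piece $-\log(1-s) - \sum_{|\Im(\rho-s)|\le T}\log(\rho/(\rho-s))$ then combines with $-\log((s-1)\zeta(s))$ via the Hadamard product of $\zeta$, leaving only error controlled by the truncation height $T$. To rewrite the zero contributions in the stated form, use the substitution $u = (\rho-s-t)\log y$ to see $\mathrm{li}(y^{\rho-s}) = \int_0^\infty y^{\rho-s-t}/(\rho-s-t)\,dt$. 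The case $i \ge 1$ follows by differentiating both sides $i$ times in $s$: each derivative of a factor $y^w$ pulls down a $\log y$, which yields the $(\log y)^i y^{-\Re s}$ contribution from the $O(1)$ part of the explicit formula error, and the $(\log y)^{i-1}\log^2(yT)/T \cdot y^{1-\Re s}$ contribution from the $O(t\log^2(tT')/(T'\log t))$ part (the shift from $i$ to $i-1$ occurs because the Abel integral against $t^{-s-1}$ produces an extra $1/\log y$ when the main term is integrated). Hypothesis \eqref{eq:d} ensures that $\int_0^\infty y^{\rho-s-t}/(\rho-s-t)\,dt$ and its $s$-derivatives are well-defined, with no singularity on the integration path.

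The main obstacle will be verifying that the conditionally convergent residual sum $\sum_{|\Im(\rho-s)|\le T}\log(\rho/(\rho-s))$, together with $-\log((s-1)\zeta(s))$, reduces to an error of the claimed size once the Hadamard product is invoked, in particular accounting for the trivial-zero and $\Gamma$-factor contributions. This step requires symmetric pairing of $\rho$ with $\bar\rho$ (so that partial sums are unambiguously defined) and matching against the standard bound $N(T) \ll T\log T$ for the number of zeros up to height $T$. Hypothesis \eqref{eq:d} also plays a role here, keeping all logarithms unambiguously defined on a single branch.
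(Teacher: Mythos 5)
Your two structural observations are sound: the Abel-summation identity $y^{-s}\mathrm{li}(y^\alpha)+s\int_1^y \mathrm{li}(t^\alpha)t^{-s-1}dt=\mathrm{li}(y^{\alpha-s})+\log\frac{\alpha}{\alpha-s}$ is correct, and with $\alpha=1$ it does reproduce $\gamma+\log\log y+I((1-s)\log y)$ exactly. But the proposal has genuine gaps, and they sit exactly where the lemma's strength lies. The claimed error, already for $i=0$, is $O\bigl(y^{-\Re s}+\frac{\log^2(yT)}{T\log y}y^{1-\Re s}\bigr)$, which tends to $0$ as $y,T\to\infty$ for fixed $s$ with $\Re s>0$; hence no $y$-independent quantity of size $O(1)$ may be left over. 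Your input formula $\Pi_0(t)=\mathrm{li}(t)-\sum_{|\Im\rho|\le T'}\mathrm{li}(t^\rho)+O\bigl(\frac{t\log^2(tT')}{T'\log t}+1\bigr)$ already violates this discipline: in any standard derivation (partial summation from the truncated $\psi$-formula, or truncating the exact Riemann--von Mangoldt formula) the ``$+1$'' is really a systematic $t$-independent quantity of size up to $\log^2 T'$ (boundary terms like $\sum_{|\gamma|\le T'}\mathrm{li}(2^\rho)$, the $-\log 2$, the trivial-zero integral), and after your transform it contributes on the order of $|s|\min\{\log y,1/\Re s\}\,\log^2 T'$ --- nowhere near $O(y^{-\Re s})$. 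These pieces are not errors; they must be carried exactly and shown to cancel against $\log(\zeta(s)(s-1))$ together with the constants $\log(\rho/(\rho-s))$, which is precisely the step you defer as ``the main obstacle''. Moreover the lemma is claimed uniformly for $|\Im s|\le T-3$, and your route loses this uniformity twice: the genuine oscillatory error $O(t\log^2(tT')/T')$, bounded in absolute value inside $s\int_1^y(\cdot)\,t^{-s-1}dt$, gives $\ll\frac{|s|}{T}\log^2(yT)\,\frac{y^{1-\Re s}}{\max\{1-\Re s,\,1/\log y\}}$, exceeding the target by a factor up to $|s|\log y/(1-\Re s)$ unless you exploit cancellation in $t^{-i\Im s}$ (you never do); and for $|\Im s|\asymp T$ the discarded constants $\log(\rho/(\rho-s))$ with $|\Im(\rho-s)|>T$ are individually of size $\asymp 1$ and sum to something as large as $|s|^2\log T/T$ before the delicate cancellation with the $\log\Gamma$- and $bs$-terms of the Hadamard product, so ``invoking the Hadamard product'' does not by itself reduce the residue to the claimed size. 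Finally, ``differentiating both sides $i$ times'' of an $O$-estimate is not legitimate as stated; one must use Cauchy's formula on discs of radius $\asymp 1/\log y$ (which does account for the $(\log y)^i$ factors) and control the jumps caused by the $s$-dependent truncation window $|\Im(\rho-s)|\le T$.

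For comparison: the present paper does not prove the lemma but quotes it from \cite{gorodetsky2022smooth}, and the shape of the conclusion (Hankel-type terms $\int_0^\infty\frac{y^{\rho-s-t}}{\rho-s-t}dt$, an error $\frac{\log^2(yT)}{T}$, hypothesis \eqref{eq:d}) indicates a truncated Perron argument applied directly to $\sum_{n\le y}\Lambda(n)n^{-s}/\log n$ with kernel $\log\zeta(s+w)\,y^w/w$: one shifts the contour, the pole at $w=0$ gives $\log\zeta(s)$, a horizontal cut from $w=1-s$ gives $\gamma+\log\log y+I((1-s)\log y)+\log(s-1)$, and the cut leftwards from each zero gives exactly $-\int_0^\infty\frac{y^{\rho-s-t}}{\rho-s-t}dt$, with \eqref{eq:d} keeping these cuts away from the pole of $1/w$. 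In that organization the truncation at height $T$ is performed after the shift by $s$, so no stray factor $|s|$ appears and no global constants ever need to be reconciled by hand --- which is exactly what your route would have to reconstruct with substantial additional work.
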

\begin{cor}\label{cor:logg1size}
Fix $0 \le i \le 4$. Let $y \ge 4$. Let $s \in \CC$ with $\Re s \in [0,1]$. If  $|\Im s| \le 1$ we have
$(\log G_1)^{(i)}(s,y) \ll L(y)^{-c} y^{1-\Re s}$ unconditionally. Under RH, if $T \ge 4$ and  $|\Im s| \le 1$ then
		\begin{equation}\label{eq:gen}
			\begin{split}
				(\log G_1)^{(i)}(s,y) &=(-\log y)^{i-1}y^{- s}\bigg( \sum_{|\Im (\rho-s)| \le T} \frac{y^{\rho}}{\rho-s} + O \bigg(  \frac{y^{\frac{1}{2}}}{\log y}+ \frac{y\log^2(yT)}{T}\bigg) \bigg)\\
				&=(-1)^i(\log y)^{i-1} y^{- s} (\psi(y)-y+ O(y^{\frac{1}{2}})) \ll y^{\frac{1}{2}-\Re s}(\log y)^{i+1}.
			\end{split}
		\end{equation}
Under RH, if $T \ge 4$, $\Re s \in [3/4,1]$ and $|\Im s| \le y^{9/10}$ then
\begin{equation}\label{eq:gen2}
		(\log G_1)^{(i)}(s,y) = (-1)^i(\log y)^{i-1} y^{- s} (\psi(y)-y+ O(y^{\frac{1}{2}}\log^2 (|\Im s|+2))) \ll y^{\frac{1}{2}-\Re s}(\log y)^{i+1}.
\end{equation}
\end{cor}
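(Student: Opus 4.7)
\textit{Proof plan.} The plan is to invoke Lemma \ref{lem:logg1formula}, evaluate the integrals it involves by one integration by parts, and then convert the resulting sum over zeros into $\psi(y)-y$ via the explicit formula.

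First, I would verify the distance hypothesis \eqref{eq:d} under RH. Since $\rho-s-t=(\tfrac12-\Re s-t)+i(\gamma-\Im s)$, one always has $|\rho-s-t|\ge\min_\gamma|\gamma-\Im s|$, and when $\Re s\ge\tfrac12$ also $|\rho-s-t|\ge\Re s-\tfrac12$. When $|\Im s|\le 1$, the lowest zero satisfies $|\gamma|>14$ so the first quantity exceeds $13$; when $\Re s\in[3/4,1]$, the second quantity is $\ge 1/4$. Thus Lemma \ref{lem:logg1formula} applies in both regimes with an absolute implied constant.

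Second, set $J_\rho(s):=\int_0^\infty y^{\rho-s-t}(\rho-s-t)^{-1}\,dt$. Integration by parts in which $y^{-t}$ is antidifferentiated gives
\[
J_\rho(s)=\frac{y^{\rho-s}}{(\rho-s)\log y}+\frac{y^{\rho-s}}{\log y}\int_0^\infty\frac{y^{-t}}{(\rho-s-t)^2}\,dt,
\]
and the tail integral is $O(1/\log y)$ by the distance estimates of step~1. Differentiating $i$ times in $s$, the derivatives falling on $y^{\rho-s}$ each contribute a factor $-\log y$, while derivatives falling on $1/(\rho-s)$ produce corrections of strictly smaller order, leading to
\[
\frac{d^iJ_\rho}{ds^i}=(-1)^i(\log y)^{i-1}\frac{y^{\rho-s}}{\rho-s}\bigl(1+O((|\rho-s|\log y)^{-1})\bigr).
\]
Substituting back into Lemma \ref{lem:logg1formula} and using $(-1)^{i+1}=(-1)^{i-1}$ to repackage the prefactor as $(-\log y)^{i-1}$ then yields the first line of \eqref{eq:gen}, after absorbing the correction into the error already present in the lemma.

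Third, I would rewrite the shifted sum by $(\rho-s)^{-1}=\rho^{-1}+s/(\rho(\rho-s))$ and apply the classical explicit formula $\psi(y)-y=-\sum_{|\gamma|\le T}y^\rho/\rho+O(y\log^2(yT)/T+\log y)$. For $|\Im s|\le 1$, the remainder $s\sum y^\rho/(\rho(\rho-s))$ is bounded by $y^{1/2}\sum_\rho|\rho|^{-2}\ll y^{1/2}$, yielding the second line of \eqref{eq:gen}. For $\Re s\in[3/4,1]$, $|\Im s|\le y^{9/10}$, I would split the zeros into $|\gamma|\le|\Im s|/2$, $|\gamma|\in[|\Im s|/2,2|\Im s|]$, and $|\gamma|>2|\Im s|$, and estimate $\sum_\rho 1/(|\rho||\rho-s|)\ll\log^2(|\Im s|+2)/|\Im s|$ using the lower bound $|\rho-s|\ge 1/4$ in the middle range together with Riemann--von Mangoldt; multiplying by $|s|\asymp|\Im s|$ yields the advertised $O(y^{1/2}\log^2(|\Im s|+2))$ in \eqref{eq:gen2}. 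The mismatch between the index sets $\{|\Im(\rho-s)|\le T\}$ and $\{|\gamma|\le T\}$ is controlled by choosing $T=y^2$, so that the symmetric difference contains $O(|\Im s|\log T)$ zeros, each contributing at most $y^{1/2}/T$, which is negligible.

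Finally, the unconditional bound $\ll L(y)^{-c}y^{1-\Re s}$ follows by taking $T=\exp((\log y)^{3/5})$ in Lemma \ref{lem:logg1formula} and applying the Vinogradov--Korobov zero-free region, which gives $y^{\Re\rho}\ll y/L(y)^c$ for every nontrivial zero with $|\gamma|\le T$, combined with the standard estimate $\sum_{|\gamma|\le T}|\rho-s|^{-1}\ll\log^2 T$. I expect the main obstacle to lie in step~3 for the regime $|\Im s|\le y^{9/10}$, where zeros can cluster near $\Im s$ and $1/(\rho-s)$ is controlled only through the real-part separation $\Re s-1/2\ge 1/4$; the three-range splitting together with Riemann--von Mangoldt is exactly what is needed to keep the loss polylogarithmic in $|\Im s|$.
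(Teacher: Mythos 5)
Your proposal follows essentially the same route as the paper: verify the distance condition \eqref{eq:d}, reduce the integrals in Lemma \ref{lem:logg1formula} to $(-\log y)^{i-1}y^{\rho-s}/(\rho-s)$ up to a $O(1/(|\rho-s|\log y))$ correction (this is exactly \eqref{eq:xrhoideriv} in the paper), and then pass to $\psi(y)-y$ via the explicit formula using $\sum_{\rho}1/(|\rho||\rho-s|)\ll 1$ for $|\Im s|\le 1$; your three-range splitting for the regime $\Re s\in[3/4,1]$, $|\Im s|\le y^{9/10}$ is a correct fleshing-out of what the paper dismisses as ``proved along similar lines,'' and the real-part separation $\Re s-\tfrac12\ge\tfrac14$ is indeed the key point there. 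Two small remarks. First, in the unconditional part your parameter choice is miscalibrated: with $T=\exp((\log y)^{3/5})$ the Vinogradov--Korobov region only gives $y^{\Re\rho}\le y\exp(-c(\log y)^{3/5}(\log\log y)^{-1/3})$ for zeros of height up to $T$, and since $(\log\log y)^{-1/3}=o((\log\log y)^{-1/5})$ this is \emph{not} $\ll yL(y)^{-c'}$ for any fixed $c'>0$; taking $T=L(y)^{c}$ (as the paper does) balances the zero-free-region saving against the truncation error $y^{1-\Re s}\log^2(yT)/T$ and yields the stated $L(y)^{-c}y^{1-\Re s}$. Second, the final inequalities $\ll y^{1/2-\Re s}(\log y)^{i+1}$ in \eqref{eq:gen} and \eqref{eq:gen2} still need von Koch's bound $\psi(y)-y\ll y^{1/2}\log^2 y$, which you should state explicitly; with these two adjustments the argument is complete.
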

\begin{proof}
If $|\Im s| \le 1$ then \eqref{eq:d} holds. It is easily seen that, for any zero $\rho$ of $\zeta$,
	\begin{equation}\label{eq:xrhoideriv}
			\frac{d^{i}}{ds^{i}}\int_{0}^{\infty} \frac{y^{\rho-s-t}}{\rho-s-t}dt =- \frac{(-\log y)^{i-1}y^{\rho-s}}{\rho-s} \left( 1  +O\left( \frac{1}{\min_{t \ge 0} |\rho-s-t|\log y}\right)\right)
	\end{equation}
if \eqref{eq:d} holds. We apply Lemma \ref{lem:logg1formula} with $T=L(y)^c$ and use the Vinogradov--Korobov zero-free region and \eqref{eq:xrhoideriv} to simplify. Now assume RH, i.e.~$|y^{\rho}|=y^{1/2}$. We demonstrate \eqref{eq:gen}, and \eqref{eq:gen2} is proved along similar lines. We apply Lemma \ref{lem:logg1formula} with $T\ge 4$ and simplify it using \eqref{eq:xrhoideriv}. We bound the resulting error using the facts $\min_{t \ge 0}|\rho-s-t| \asymp |\rho-s|$ and  $\sum_{\rho} 1/|\rho-s|^2 \ll 1$ for $|s|\le 2$, since there are $\ll \log T$ zeros of $\zeta$ between height $T$ and $T+1$ \cite[Thm.~10.13]{MV}. This gives the first equality in \eqref{eq:gen}. The second equality in \eqref{eq:gen} follows by taking $T=y$, recalling the classical estimate \begin{equation}\label{eq:sumz}
\psi(y)-y = -\sum_{|\rho| \le y} \frac{y^{\rho}}{\rho} + O(\log^2 y)
\end{equation}
given in \cite[Thm.~12.5]{MV} (it also follows from Lemma \ref{lem:logg1formula} with $(i,s,T)=(1,0,y)$), and the bound $\sum_{\rho} 1/(|\rho-s||\rho|) \ll 1$. The last inequality in \eqref{eq:gen} is von Koch's bound  $\psi(y)-y=O(y^{1/2}\log^2 y)$ \cite{vonKoch}. 
\end{proof}
We turn to $G_2$. By the non-negativity of the coefficients of $\log G_2$, for $i \ge 0$ and $\Re s>0$ we have 
\begin{equation}\label{eq:pos}
	|(\log G_{2})^{(i)}(s,y)| \le (-1)^i\log G_{2}^{(i)}(\Re s ,y ).
\end{equation}
\begin{lem}\cite{gorodetsky2022smooth}\label{lem:logGderivaccurate2}
	Fix $\varepsilon>0$ and $0 \le i \le 4$. For $y \ge 2$ and $1 \ge s \ge \varepsilon$,	
	\begin{align}	(\log G_2)^{(i)}(s,y) &=( 1+O_{\varepsilon}( L(y)^{-c})) \frac{(-2)^i}{2}\int_{y^{1/2}}^{y}(\log t)^{i-1} t^{-2s} dt \asymp_{\varepsilon}  \frac{(-\log y)^{i}y^{\max\{1-2s,\frac{1}{2}-s\}}}{\max\{1,|s-1/2|\log y\}}.
	\end{align}
\end{lem}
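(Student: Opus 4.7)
The plan is to expand $(\log G_2)^{(i)}$ term-by-term, isolate the dominant $k=2$ contribution via the prime number theorem with classical error term, and then evaluate the resulting integral.

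Differentiating the defining series gives
\[
(\log G_2)^{(i)}(s,y) = (-1)^i \sum_{k\ge 2} k^{i-1} \sum_{y^{1/k} < p \le y} (\log p)^i p^{-ks},
\]
and the $k=2$ piece carries prefactor $(-1)^i 2^{i-1}=(-2)^i/2$. For $k\ge 3$, since $p>y^{1/k}$ and $s\ge\varepsilon$, Chebyshev's bound on $\pi$ combined with partial summation yields
\[
\sum_{y^{1/k}<p\le y}(\log p)^i p^{-ks} \ll (\log y)^i \, y^{\max\{1-ks,\,1/k-s\}},
\]
so the total $k\ge 3$ contribution is $\ll_\varepsilon (\log y)^{i+1} y^{\max\{1-3s,\,1/3-s\}}$. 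A case check against the $k=2$ main size $y^{\max\{1-2s,\,1/2-s\}}$ shows this is smaller by a factor of at least $y^{-\min\{s,1/6\}}\ll L(y)^{-c}$ for any $c$, so the $k\ge 3$ terms are safely absorbed into the advertised error.

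For the $k=2$ piece I would write $\pi(t)=\mathrm{Li}(t)+E(t)$ with $E(t)\ll tL(t)^{-c}$ (de la Vallée-Poussin/Vinogradov–Korobov), and use Riemann–Stieltjes:
\[
\sum_{y^{1/2}<p\le y}(\log p)^i p^{-2s} = \int_{y^{1/2}}^y (\log t)^{i-1} t^{-2s}\, dt + \int_{y^{1/2}}^y (\log t)^i t^{-2s}\, dE(t).
\]
Integrating the second integral by parts, using $|E(t)|\ll tL(t)^{-c}$ and the crude bound $|(d/dt)((\log t)^i t^{-2s})| \ll (1+|s|)(\log t)^i/t^{2s+1}$, gives an error of size $O_\varepsilon(L(y)^{-c}(\log y)^{O(1)})$ times the main integral; since $(\log y)^{O(1)} L(y)^{-c}\ll L(y)^{-c'}$, this is comfortably absorbed.

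What remains is to evaluate the integral by the substitution $t=y^v$:
\[
\int_{y^{1/2}}^y (\log t)^{i-1} t^{-2s}\, dt = (\log y)^i \int_{1/2}^1 v^{i-1} y^{v(1-2s)}\, dv.
\]
One handles three regimes separately. For $s>1/2$ with $(s-\tfrac12)\log y\to\infty$ the integrand is a decaying exponential concentrated near $v=1/2$, giving $\asymp (\log y)^{i-1} y^{1/2-s}/(s-1/2)$. For $s<1/2$ with $(\tfrac12-s)\log y\to\infty$ it is an increasing exponential concentrated at $v=1$, giving $\asymp (\log y)^{i-1} y^{1-2s}/(1/2-s)$. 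For $|s-\tfrac12|\log y =O(1)$ the exponential factor is $\asymp 1$ throughout $[1/2,1]$, producing $\asymp (\log y)^i$. These three cases combine uniformly into the claimed $\asymp_\varepsilon (\log y)^i y^{\max\{1-2s,\,1/2-s\}}/\max\{1,|s-\tfrac12|\log y\}$. The only delicate point is the transition region $|s-\tfrac12|\log y\asymp 1$, but the stated maximum in the denominator is precisely what smooths that interpolation.
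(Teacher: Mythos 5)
Your argument is correct. Note that this paper does not prove the lemma at all — it is imported verbatim from the cited reference \cite{gorodetsky2022smooth} — so there is no in-paper proof to compare against; your route (expand $(\log G_2)^{(i)}$ termwise, discard the $k\ge 3$ powers by a Chebyshev-type bound with a power-of-$y$ saving, treat $k=2$ by partial summation against $\pi(t)=\mathrm{Li}(t)+E(t)$ with the Vinogradov--Korobov error, then evaluate $\int_{1/2}^1 v^{i-1}y^{v(1-2s)}\,dv$ in the three regimes of $|s-\tfrac12|\log y$) is the natural one and surely close to the source. The only imprecisions are harmless: your per-$k$ bound for $k\ge 3$ can lose an extra factor $\log y$ when $ks$ is near $1$, and the comparison with the main term should account for the denominator $\max\{1,|s-\tfrac12|\log y\}$, but since you save a fixed power $y^{-\min\{\varepsilon,1/6\}}$ these logarithms are absorbed into $L(y)^{-c}$ exactly as you say.
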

Corollary \ref{cor:logg1size} and Lemma \ref{lem:logGderivaccurate2}, applied with $i=0$, imply the following
\begin{lem}\label{lem:sizeg}
	Assume RH. Fix $\varepsilon>0$. If $1 \ge s \ge 1/2+\varepsilon$ and $T \ge 4$ then
	\begin{align}
		G(s,y) &= 1 +\frac{y^{-s}}{\log y} \bigg(- \sum_{|\rho| \le T} \frac{y^{\rho}}{\rho-s} + \frac{y^{\frac{1}{2}}}{2s-1}+ O_{\varepsilon}\bigg(\frac{y^{\frac{1}{2}}}{\log y} + \frac{y \log^2( yT)}{T}\bigg)\bigg)\\
		&=1 + \frac{y^{-s}}{\log y} ( \psi(y)-y + O_{\varepsilon}(y^{\frac{1}{2}}))=1 + O_{\varepsilon}( y^{\frac{1}{2}-s} \log y).
\end{align}
\end{lem}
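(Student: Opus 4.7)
The plan is to combine $G = G_1 G_2$ with Corollary \ref{cor:logg1size} (with $i=0$) and Lemma \ref{lem:logGderivaccurate2} (with $i=0$), and then exponentiate. Since $s$ is real, the hypothesis $|\Im s|\le 1$ of the corollary is met, and under RH both $\log G_1(s,y)$ and $\log G_2(s,y)$ are $O_\varepsilon(y^{1/2-s}\log y)=o(1)$ for $s \ge 1/2+\varepsilon$, so
\[G(s,y)-1 = \log G_1(s,y) + \log G_2(s,y) + O_\varepsilon\bigl(y^{1-2s}\log^2 y\bigr).\]

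Corollary \ref{cor:logg1size} applied with $i=0$ directly supplies
\[\log G_1(s,y) = -\frac{y^{-s}}{\log y}\left(\sum_{|\rho|\le T}\frac{y^\rho}{\rho-s} + O\bigl(y^{1/2}/\log y + y\log^2(yT)/T\bigr)\right),\]
which already matches the zeros-sum in the main expression of the lemma. For $\log G_2$, Lemma \ref{lem:logGderivaccurate2} with $i=0$ yields
\[\log G_2(s,y) = \bigl(1+O_\varepsilon(L(y)^{-c})\bigr)\cdot\tfrac{1}{2}\int_{y^{1/2}}^{y} \frac{t^{-2s}}{\log t}\,dt.\]
Substituting $t=y^v$ and integrating by parts in $v$ — the integrand concentrates near the lower endpoint $v=1/2$ since $2s-1\ge 2\varepsilon$ — the integral evaluates to $\frac{2 y^{1/2-s}}{(2s-1)\log y}\bigl(1+O_\varepsilon(1/\log y)\bigr)$, giving
\[\log G_2(s,y) = \frac{y^{-s}}{\log y}\cdot\frac{y^{1/2}}{2s-1}+O_\varepsilon\bigl(y^{1/2-s}/\log^2 y\bigr),\]
exactly the $y^{1/2}/(2s-1)$ contribution in the lemma. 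The quadratic remainder $O_\varepsilon(y^{1-2s}\log^2 y)$ from exponentiation is dominated by $y^{1/2-s}/\log^2 y$ once $y^{s-1/2}\gg \log^4 y$, which holds for $y\ge C_\varepsilon$ since $s\ge 1/2+\varepsilon$; for smaller $y$ the inequality is absorbed into the $O_\varepsilon$ implicit constant. This proves the first displayed equality.

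The second equality follows by specialising $T=y$ and applying the explicit formula \eqref{eq:sumz}, $\psi(y)-y=-\sum_{|\rho|\le y}y^\rho/\rho+O(\log^2 y)$, combined with $\sum_\rho 1/(|\rho||\rho-s|)\ll 1$ (valid for bounded real $s$ off the critical line) to exchange $1/(\rho-s)$ for $1/\rho$ at cost $O(y^{1/2})$; the $O(y\log^2(y^2)/y)=O(\log^2 y)$ error from the first equality and the $y^{1/2}/(2\beta-1)$-type term are both absorbed into the $O_\varepsilon(y^{1/2})$. The third equality is then immediate from von Koch's bound $\psi(y)-y=O(y^{1/2}\log^2 y)$ under RH. The only genuine calculation is the integration-by-parts estimate for the integral in $\log G_2$; everything else is error-bookkeeping designed to match the shape stated in the lemma.
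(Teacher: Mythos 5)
Your proposal is correct and follows essentially the same route as the paper, whose proof of this lemma is precisely "apply Corollary \ref{cor:logg1size} and Lemma \ref{lem:logGderivaccurate2} with $i=0$" — you have simply made explicit the evaluation of the $\log G_2$ integral, the exponentiation of $\log G_1+\log G_2$ with a quadratic error absorbed for $s\ge 1/2+\varepsilon$, and the passage to $\psi(y)-y$ via \eqref{eq:sumz} and von Koch, all of which match the paper's intended bookkeeping.
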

Corollary \ref{cor:psiover} follows from Theorem \ref{thm:psierror} by simplifying $G(\beta,y)$ using Lemma \ref{lem:sizeg} and \eqref{eq:ybetasize}.
\section{Truncation estimates for \texorpdfstring{$\Psi$}{Psi} and \texorpdfstring{$\Lambda$}{Lambda}}\label{sec:trunc}
The purpose of this section is to prove the following two propositions.
\begin{proposition}[Medium $u$]\label{prop:medu}
Suppose $x \ge y \ge 2$ satisfy \[u \ge (\log y) ( \log \log y)^3.\]
Fix $\varepsilon>0$. Suppose $y \ge (\log x)^{1+\varepsilon}$ and $x \ge C_{\varepsilon}$. Then
\begin{align}
\label{eq:psimedium}\Psi(x,y) &= \frac{1}{2\pi i} \int_{\betasad-\frac{i}{\log y}}^{\betasad+\frac{i}{\log y}} \zeta(s,y) \frac{x^s}{s}ds + O_{\varepsilon}\left( \frac{\Psi(x,y)+x\rho(u) G(\betasad,y)}{\exp(c_{\varepsilon}\min\{u/\log^2(u+1),(\log y)^{4/3}\})}\right),\\
\label{eq:lambmed}\Lambda(x,y) &= \frac{1}{2\pi i} \int_{\betasad-\frac{i}{\log y}}^{\betasad+\frac{i}{\log y}} F(s,y) \frac{x^s}{s}ds + O_{\varepsilon}\left(\frac{x\rho(u)}{\exp( c u/\log^2(u+1))}\right).
\end{align}
\end{proposition}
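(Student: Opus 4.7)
The plan is to start from the Mellin/Perron representations of $\Psi(x,y)$ and $\Lambda(x,y)$ on the vertical line $\Re s=\betasad$, truncate the contour at height $T$, and split the resulting integral at $|\Im s|=1/\log y$. The inner piece $|\Im s|\le 1/\log y$ is precisely the main term displayed in the proposition, so the task reduces to controlling two contributions: the ``annulus'' $1/\log y\le|\Im s|\le T$, and (for $\Psi$) the Perron truncation past height $T$.

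For \eqref{eq:psimedium}, I take $T=\exp(c_\varepsilon(\log y)^{4/3})$, which is comfortably below the cutoff $\exp((\log y)^{3/2-\varepsilon})$ governing the range of validity of Lemma~\ref{lem:zetaratio}. A standard truncated Perron formula gives a truncation error of size $x^\betasad\zeta(\betasad,y)/T$ up to polylog losses, which I convert via the saddle-point identity
\[ x^\betasad \zeta(\betasad,y) \asymp G(\betasad,y)\, x\rho(u)\sqrt{u}\,\log y \]
(coming from $\zeta(\betasad,y)=F(\betasad,y)G(\betasad,y)$, $F(\betasad,y)=\hat\rho(-\xi)\zeta(\betasad)(\betasad-1)\log y$, the expansion $\zeta(\betasad)(\betasad-1)=1+O(1-\betasad)$, and Lemma~\ref{lem:rho size} together with $x=y^u$) into an error $\ll G(\betasad,y)\,x\rho(u)\exp(-c_\varepsilon(\log y)^{4/3})$. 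On the annulus $1/\log y\le |t|\le T$, Lemma~\ref{lem:zetaratio} yields
\[ \Bigl|\tfrac{\zeta(\betasad+it,y)}{\zeta(\betasad,y)}\Bigr| \ll \exp\!\Bigl(-\tfrac{cut^2}{(1-\betasad)^2+t^2}\Bigr), \]
and because $(1-\betasad)\log y=\xi(u)\asymp\log(u+1)$ by Lemma~\ref{lem:xilem}, the exponential factor at $|t|=1/\log y$ is $\exp(-cu/\log^2(u+1))$ and only strengthens deeper into the annulus. Integrating against $dt/|t|$ and splitting at $|t|=1-\betasad$ costs at most a $\log\log y$ factor, so the annulus contributes $\ll G(\betasad,y)\,x\rho(u)\exp(-cu/\log^2(u+1))$. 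The combination of the two bounds gives the claimed $\exp(-c_\varepsilon\min\{u/\log^2(u+1),(\log y)^{4/3}\})$ rate.

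For \eqref{eq:lambmed}, I apply the same splitting to \eqref{eq:perronforlambda}. For $1/\log y\le|t|\le \pi/\log y$ the first case of Lemma~\ref{lem:i bounds} already gives $|\hat\rho((s-1)\log y)|\ll \exp(I(\xi)-c u/\log^2(u+1))$ since the exponent dominates $t^2u(\log y)^2/(2\pi^2)\ge u/(2\pi^2)$; for $\pi/\log y\le |t|\log y\le 1+u\xi$ the second case gives the same bound directly; and for $|t|\log y\gg 1+u\xi$ the third case yields $\hat\rho((s-1)\log y)\ll 1/(|t|\log y)$. Pairing these with Lemmas~\ref{lem:convex}, \ref{lem:second} (dyadically in $|t|$, and using Lemma~\ref{lem:functional} to transfer the second moment to $\Re s=1-\betasad$ when $\betasad<1/2$) renders the tail absolutely convergent, and the dominant exponential factor $\exp(I(\xi)-cu/\log^2(u+1))$ combines with Lemma~\ref{lem:rho size} via $x\rho(u)\asymp x^\betasad e^{I(\xi)}/\sqrt{u}$ to produce a bound $\ll x\rho(u)\exp(-cu/\log^2(u+1))$.

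The main obstacle is the tail bound for $\Lambda$: one has to match the only modest $1/|t|$ decay of $\hat\rho((s-1)\log y)$ for large $|t|$ against the polynomial growth of $\zeta(\betasad+it)$ on a line $\Re s=\betasad$ that can dip below $1/2$, while preserving the exponential gain $\exp(-cu/\log^2(u+1))$. A careful dyadic breakdown in $|t|$ (with second-moment bounds Lemma~\ref{lem:second}, shifted via the functional equation when $\betasad<1/2$) resolves this. For $\Psi$, the $(\log y)^{4/3}$ exponent in the final bound reflects the tension between wanting $T$ large (to kill the Perron truncation) and wanting $T$ to lie safely within the validity range of Lemma~\ref{lem:zetaratio}.
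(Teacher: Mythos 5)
Your treatment of \eqref{eq:lambmed} has a genuine gap: you claim that pairing the decay $\hat\rho((s-1)\log y)\ll 1/(|t|\log y)$ with pointwise and second-moment bounds for $\zeta$ ``renders the tail absolutely convergent'', but it does not. In the regime $|t|\log y\gg 1+u\xi$ the third case of Lemma~\ref{lem:i bounds} gives $\hat\rho((s-1)\log y)(s-1)\log y = 1+O(e^{\xi}/(|t|\log y))$, so $F(s,y)\approx\zeta(s)$ there and the integrand is of size $x^{\betasad}|\zeta(\betasad+it)|/|t|$; since $|\zeta(\betasad+it)|$ has average size at least a power of $\log|t|$ (and grows like $|t|^{1/2-\betasad}$ on average when $\betasad<1/2$), the integral $\int^{\infty}|\zeta(\betasad+it)|\,dt/|t|$ diverges, and no absolute-value argument (dyadic or not, with or without Lemma~\ref{lem:second} and Lemma~\ref{lem:functional}) can close the estimate. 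The tail of the Perron integral for $\Lambda$ is only conditionally convergent, and one must exploit cancellation in $x^{it}$. This is exactly what the paper's Lemma~\ref{lem:zeta} does (approximate $\zeta(s)$ by the Dirichlet polynomial $\sum_{n\le|t|}n^{-s}$ and apply Perron's formula termwise), feeding into Corollary~\ref{cor:lambdatrunc}; the paper then truncates the $\Lambda$-integral at the \emph{small} height $T=y^{1-\betasad}/\log y\asymp u\log(u+1)/\log y$, after which absolute values together with Lemmas~\ref{lem:i bounds}, \ref{lem:second} and \ref{lem:functional} suffice on $1/\log y\le|t|\le T$ (as in your sketch). Without some such cancellation device your argument for \eqref{eq:lambmed} fails at the outset.

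There is also a weaker but real gap in your treatment of \eqref{eq:psimedium}: the ``standard truncated Perron formula'' does not give an error $x^{\betasad}\zeta(\betasad,y)/T$ up to polylog losses. The terms with $n$ within a bounded factor of $x$ contribute roughly $\sum_{k\le\log_2 T}2^{k}T^{-1}\bigl(\Psi(x+Cx2^{-k},y)-\Psi(x-Cx2^{-k},y)\bigr)$, and since the $y$-smooth numbers are sparse when $y$ is as small as $(\log x)^{1+\varepsilon}$ (so $\Psi(x,y)=x^{\theta}$ with $\theta$ possibly small), bounding this by all integers gives only $x/T$, which can exceed the allowed error. The paper handles this with Hildebrand's inequality $\Psi(a+b,y)-\Psi(a,y)\le\Psi(b,y)$ and the bound $\Psi(x/d,y)\ll\Psi(x,y)/d^{\alpha}$, producing the error $\Psi(x,y)\log T/T^{\alpha}$ in Lemma~\ref{lem:psitrunc} — which is why $\Psi(x,y)$ appears in the error term of \eqref{eq:psimedium} at all, and why $\alpha\gg_{\varepsilon}1$ (from $y\ge(\log x)^{1+\varepsilon}$) is invoked. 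Your annulus estimate via Lemma~\ref{lem:zetaratio} and the saddle-point identity $x^{\betasad}\zeta(\betasad,y)\asymp x\rho(u)G(\betasad,y)\sqrt{(\log x)(\log y)}$ matches the paper, but the truncation step needs the short-interval input above to be legitimate.
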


\begin{proposition}[Small $u$]\label{prop:small}
Suppose $x \ge y \ge 2$ satisfy
\[u \le (\log y) ( \log \log y)^3.\]
Suppose $x \ge C$ and let $T \in [(\log x)^5,x\rho(u)]$.  Then
\begin{align}
	\Psi(x,y) &= \frac{1}{2\pi i} \int_{\betasad-i T}^{\betasad+iT} \zeta(s,y) \frac{x^s}{s}ds + O\left( \frac{\Psi(x,y)+x\rho(u) G(\betasad,y)}{T^{4/5}}\right),\\
	\Lambda(x,y) &= \frac{1}{2\pi i} \int_{\betasad-iT}^{\betasad+iT} F(s,y) \frac{x^s}{s}ds + O\left(\frac{x\rho(u)}{T^{4/5}}\right).
\end{align}
\end{proposition}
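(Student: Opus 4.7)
Both estimates will be obtained by bounding the tail $|\Im s|>T$ of absolutely convergent contour integrals on the line $\Re s=\beta$. I treat $\Psi$ and $\Lambda$ separately.

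For $\Psi$, the plan is to apply the effective (truncated) Perron formula to $\zeta(s,y)=\sum_{n\text{ is }y\text{-smooth}}n^{-s}$ at abscissa $\kappa=\beta\in(0,1)$, reducing the claim to bounding
\[
\sum_{n\text{ is }y\text{-smooth}}\frac{(x/n)^\beta}{1+T|\log(x/n)|}.
\]
Split by proximity to $x$. For $n$ with $|\log(x/n)|\gg 1$, the contribution is $\ll x^\beta\zeta(\beta,y)/T$; using $\zeta(\beta,y)=F(\beta,y)G(\beta,y)$ together with $F(\beta,y)=-\xi e^{\gamma+I(\xi)}\zeta(\beta)\asymp e^{I(\xi)}\log y$ (via $|\zeta(\beta)|\asymp\log y/\xi$) and $x\rho(u)\asymp x^\beta e^{I(\xi)}/\sqrt u$ from Lemma~\ref{lem:rho size}, this rewrites as $\ll x\rho(u)G(\beta,y)\sqrt u\log y/T$. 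Since $u\ge 1$ forces $\sqrt u\log y\le u\log y=\log x\le T^{1/5}$, this is absorbed into $x\rho(u)G(\beta,y)/T^{4/5}$. For $n$ near $x$, I would group dyadically by $2^k\le T|\log(x/n)|<2^{k+1}$ and bound each class using Hildebrand's short-interval estimate $\Psi(x+z,y)-\Psi(x,y)\ll z\Psi(x,y)/x$; the small-$u$ hypothesis forces $y\ge\exp(c\sqrt{\log x})$, amply within the range of that estimate. Summing over $k$ gives $\ll\Psi(x,y)(\log T)/T\ll\Psi(x,y)/T^{4/5}$.

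For $\Lambda$, I would start from $\Lambda(x,y)=(2\pi i)^{-1}\int_{(\beta)}F(s,y)(x^s/s)\,ds$. Because $y^{1-\beta}\asymp u\log(u+1)\ll(\log y)(\log\log y)^4$ in our regime, the hypothesis $T\ge(\log x)^5$ ensures $|\Im s|\log y\gg e^\xi$ on the tail $|\Im s|\ge T$. The third case of Lemma~\ref{lem:i bounds} then gives $(s-1)\log y\cdot\hat\rho((s-1)\log y)=1+O(y^{1-\beta}/(|\Im s|\log y))$, hence $|F(\beta+it,y)|\ll|\zeta(\beta+it)|$ on the tail. Integration by parts against $x^{it}$ converts the tail into a boundary contribution $\ll|\zeta(\beta+iT)|x^\beta/(T\log x)$, estimated by convexity (Lemma~\ref{lem:convex}), plus a remainder involving $F'$ controlled via a further application of Lemma~\ref{lem:i bounds} together with the second-moment estimate (Lemma~\ref{lem:second}). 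Conversion via $x\rho(u)\asymp x^\beta e^{I(\xi)}/\sqrt u$ then yields $\ll x\rho(u)/T^{4/5}$.

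The principal obstacle is in the $\Psi$ part: the short-interval estimate for smooth numbers must hold uniformly for $T$ up to $x\rho(u)$, and the dyadic summation must be sharp enough to yield the $T^{4/5}$ exponent rather than a weaker one. A secondary difficulty in the $\Lambda$ part is squeezing $T^{-4/5}$ out of the tail integral: na\"ive convexity for $|\zeta(\beta+iT)|$ alone becomes weak when $\beta$ is close to $1/2$, so one must combine it with moment bounds or a more delicate decomposition of $F'$ via Lemma~\ref{lem:i bounds}. The exponent $4/5$ serves as a cushion: it absorbs both the $\sqrt{(\log x)(\log y)}\le\log x$ coming from the saddle-point conversion and the $\log T$ from the dyadic summation, both comfortably dominated by $T^{1/5}\ge\log x$ under the hypothesis $T\ge(\log x)^5$.
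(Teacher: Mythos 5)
Your overall architecture (truncate both Perron integrals at height $T$ on the line $\Re s=\beta$, convert $x^{\beta}\zeta(\beta,y)/T$ into $x\rho(u)G(\beta,y)\sqrt{(\log x)(\log y)}/T$ via $\zeta(\beta,y)=F(\beta,y)G(\beta,y)$ and Lemma \ref{lem:rho size}, and absorb logarithms using $T\ge(\log x)^5$) matches the paper, and that part of your $\Psi$-estimate is fine. But the step you yourself flag as the principal obstacle is a genuine gap, not a technicality: for the terms with $n$ near $x$ you invoke a short-interval bound $\Psi(x+z,y)-\Psi(x,y)\ll z\Psi(x,y)/x$ uniformly down to $z\asymp x/T$, and since $T$ is allowed to be as large as $x\rho(u)$, this means intervals of length as small as $1/\rho(u)=x^{o(1)}$. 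No equidistribution estimate for smooth numbers is known at that scale, so the argument fails there. The paper avoids this entirely: it bounds the count in any interval of length $b$ by $\Psi(b,y)+1$ (Hildebrand's inequality \eqref{eq:hildineq}, which needs no equidistribution), then uses $\Psi(x/d,y)\ll\Psi(x,y)/d^{\alpha}$ from La Bret\`eche--Tenenbaum to sum the dyadic blocks, getting $\Psi(x,y)\log T/T^{\alpha}$; this only gives exponent $\alpha$ rather than $1$, which is exactly why the proposition is stated with $T^{4/5}$ and why the hypothesis $u\le(\log y)(\log\log y)^3$ (forcing $\alpha=\beta+O(1/\log y)\sim1$) is used.

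There is a second gap in your $\Lambda$-tail. After replacing $F$ by $\zeta(1+O(y^{1-\beta}/(|t|\log y)))$ (correct, and the $O$-part is handled exactly as in the paper by second moments, since the extra $1/|t|$ makes it absolutely convergent), you propose one integration by parts against $x^{it}$ and then absolute values plus Lemma \ref{lem:convex}/\ref{lem:second} for the remainder. The remainder contains $\int_{|t|>T}|\zeta'(\beta+it)|\,dt/(|t|\log x)$, and second-moment bounds give a contribution $\gg c/\log x$ from \emph{every} dyadic block $[2^k,2^{k+1}]$, so the resulting sum diverges: the tail integral is only conditionally convergent and cannot be controlled by oscillation in $t$ alone after a single (or iterated) integration by parts. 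The paper's Lemma \ref{lem:zeta} resolves this differently: it replaces $\zeta(s)$ by the Dirichlet polynomial $\sum_{n\le|t|}n^{-s}+O_{\varepsilon}(|t|^{-\sigma})$ (Titchmarsh, Thm.~4.11) and applies Perron's formula to each term $n^{-s}x^{s}/s$ individually, so the cancellation exploited is arithmetic (in $n$) rather than purely oscillatory in $t$; only then are absolute values affordable, yielding $x^{\sigma}T^{-\sigma}\log T+\log x$, which is what Corollary \ref{cor:lambdatrunc} feeds into the proposition. You would need this (or an equivalent device) to close your argument.
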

\subsection{Preparation}
\begin{lem} \label{lem:zeta}
Fix $\varepsilon \in (0,1)$. For $\sigma \in [\varepsilon,1]$ and $x \ge T \ge 2$ we have
	\begin{align}\label{eq:2ndbnd}
	\frac{1}{2\pi i}\int_{\sigma+it: \, |t|>T} \zeta(s) \frac{x^s}{s} ds &\ll_{\varepsilon} \frac{x^{\sigma}}{T^{\sigma}}\log T + \log x.
\end{align}
\end{lem}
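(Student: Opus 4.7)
The plan is to reduce the tail integral to a difference between the principal-value integral on the full line $\Re s=\sigma$ and a truncated integral, and to estimate each piece by standard techniques. For $\sigma\in(0,1)$ and non-integer $x$, the principal-value integral $\frac{1}{2\pi i}\int_{(\sigma)}\zeta(s)x^s/s\,ds$ equals $[x]-x$: starting from the absolutely convergent identity $\frac{1}{2\pi i}\int_{(c)}\zeta(s)x^s/s\,ds=[x]$ for $c>1$, one shifts the contour leftward across the simple pole of $\zeta$ at $s=1$, whose residue is $x$. Subtracting,
\[
\frac{1}{2\pi i}\int_{\sigma+it,\,|t|>T}\zeta(s)\frac{x^s}{s}\,ds=([x]-x)-\frac{1}{2\pi i}\int_{\sigma-iT}^{\sigma+iT}\zeta(s)\frac{x^s}{s}\,ds,
\]
so the task reduces to estimating the truncated integral.

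I would estimate the truncated integral by shifting the contour to $\Re s=c:=1+1/\log x$ along the rectangle with vertices $\sigma\pm iT$ and $c\pm iT$, again crossing the pole at $s=1$ and so contributing a residue of $x$, leaving two horizontal segments at $t=\pm T$. The $c$-line integral is handled by classical truncated Perron, contributing $[x]+O(x\log x/T+\log x)$, the $\log x$ absorbing both the near-$x$ integer terms and the factor $\zeta(c)\asymp\log x$. The horizontal pieces $\int_\sigma^c \zeta(\alpha+iT')x^{\alpha+iT'}/(\alpha+iT')\,d\alpha$ are bounded by the convexity estimate $|\zeta(\alpha+iT)|\ll_\varepsilon(1+T^{1-\alpha})\log T$ of Lemma~\ref{lem:convex}; to obtain the sharp $T^\sigma$ denominator one selects a nearby height $T'\in[T,2T]$ by pigeonhole over the $L^2$-mean of Lemma~\ref{lem:second}, $\int_T^{2T}|\zeta(\alpha+it)|^2\,dt\ll T\log T$, so that $\zeta(\alpha+iT')$ is close to its average size; a Cauchy--Schwarz on the resulting horizontal produces a contribution of order $x^\sigma\log T/T^\sigma$. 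The edge case $\sigma=1$ causes no trouble, since the pole of $\zeta$ at $s=1$ lies at $t=0$ and is therefore in the excluded range $|t|\le T$.

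The main obstacle is producing the denominator $T^\sigma$ in place of the weaker $T^1$. A direct application of the convexity bound at the fixed height $t=\pm T$ yields only $O(x\log T/T+\log x)$ (the integrand on the horizontal being maximized near $\alpha=1$), which is insufficient for $\sigma<1$. The extra saving of $T^{1-\sigma}$ must come from mean-value input: one averages the horizontal height over the interval $[T,2T]$ and invokes the $L^2$-bound of Lemma~\ref{lem:second} to pick a height at which $\zeta$ is not anomalously large. This averaging step, and the verification that the resulting horizontal contribution dominates neither the $c$-line error $x\log x/T$ nor the near-$x$ contribution $\log x$, is the technical heart of the argument.
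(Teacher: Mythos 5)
There is a genuine gap: your assembly can only ever produce a bound of order $x\log x/T$, which is much larger than the claimed $x^{\sigma}T^{-\sigma}\log T+\log x$ when $\sigma<1$. Concretely, after writing the tail as $([x]-x)$ minus the truncated integral and pushing the latter to the line $\Re s=c=1+1/\log x$, you bound the $c$-line integral by classical truncated Perron with error $O(x\log x/T)$ and bound the horizontal segments separately. Take $\sigma=1/2$ and $T=\sqrt{x}$: the target is $\ll x^{1/4}\log x$, while $x\log x/T=\sqrt{x}\log x$. The horizontal segments are no better: near $\alpha=c$ the factor $(x/T)^{\alpha}$ is already of size $x/T$ and $|\zeta(\alpha+iT')|\asymp 1$ there on average, so the pigeonhole/$L^2$ selection of $T'$ can only save logarithms, never the factor $(T/x)^{1-\sigma}$. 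In short, the pieces of your decomposition are individually of size roughly $x/T$, whereas the quantity being estimated is $\ll x^{\sigma}T^{-\sigma}\log T+\log x$; bounding the pieces separately by the triangle inequality cannot recover the lemma. (A smaller point: at $\sigma=1$ both the full-line principal value and the truncated integral in your first identity pass through the pole of $\zeta$ at $s=1$, so the decomposition itself needs repair there; the observation that the pole lies in $|t|\le T$ only helps the original tail, not the pieces you split it into.)

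The missing mechanism, which is how the paper proceeds, is to stay on the line $\Re s=\sigma$ and expand $\zeta(\sigma+it)$ by the approximate functional equation of length $|t|$, namely $\zeta(s)=\sum_{n\le |t|}n^{-s}+O_{\varepsilon}(|t|^{-\sigma})$. Each Dirichlet term $n^{-s}$ is then integrated only over $|t|\ge\max\{n,T\}$, so (comparing full Perron with truncated Perron for that single term) it contributes $\ll (x/n)^{\sigma}/(1+\max\{n,T\}|\log(x/n)|)$. The terms $n\le T/2$ carry the weight $n^{-\sigma}$, and $\sum_{n\le T/2}n^{-\sigma}\ll T^{1-\sigma}$ produces exactly the factor $x^{\sigma}T^{-\sigma}\log T$; the terms $n\asymp x$ see the truncation height $\max\{n,T\}\asymp x$ rather than $T$ and therefore contribute only $O(\log x)$. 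This built-in coupling between the length of the Dirichlet polynomial and the height of integration is what yields the saving $(T/x)^{1-\sigma}$ that your contour-shift route discards.
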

The integral should be understood in principal value sense. Lemma \ref{lem:zeta} makes more precise a computation done in p.~96 of Saias' paper \cite{Saias1989} (cf.~\cite[p.~537]{Tenenbaum2015}), which is not stated for general $T$ and $\sigma$ but contains the same ideas. 
\begin{proof}
By \cite[Thm.~4.11]{Titchmarsh1986}, for every $r>0$ we have
\[ \zeta(s) =\sum_{n \le r} n^{-s} - \frac{r^{1-s}}{1-s} + O_{\varepsilon}(r^{-\Re s})\]
as long as $s \neq 1$, $\Re s \ge \varepsilon$ and $|\Im s|\le 2r$. Suppose $s=\sigma+it$ with $|t| \ge 1$. We apply this estimate with $r=|t|$, obtaining
\begin{equation}\label{eq:titch} \zeta(s) =\sum_{n \le |t|} n^{-s} - \frac{|t|^{1-s}}{1-s} + O_{\varepsilon}(|t|^{-\sigma})= \sum_{n \le |t|} n^{-s}  + O_{\varepsilon}(|t|^{-\sigma}).
\end{equation}
We now plug \eqref{eq:titch} in the left-hand side of \eqref{eq:2ndbnd}. The contribution of the error term to the integral is acceptable:
\[ \int_{\sigma+it: \, |t|>T} O(|t|^{-\sigma}) \frac{x^s}{s} ds \ll x^{\sigma}\int_{T}^{\infty} |t|^{-\sigma-1}dt \ll_{\varepsilon} \frac{x^{\sigma}}{T^{\sigma}}.\]
The contribution of $n^{-s} \mathbf{1}_{n\le|t|}$ in \eqref{eq:titch} to the left-hand side of \eqref{eq:2ndbnd} is
\begin{equation}\label{eq:n} \frac{1}{2\pi i} \int_{\sigma+it: |t|>\max\{n,T\}} n^{-s} \frac{x^s}{s}ds.
\end{equation}
Since
\[ \frac{1}{2\pi i} \int_{\sigma+it:\, |t| \le S} n^{-s} \frac{x^s}{s}ds =\mathbf{1}_{x>n} + \frac{\mathbf{1}_{x=n}}{2} +O\left( \frac{(x/n)^{\sigma}}{1+S|\log(x/n)|}\right), \qquad S \ge 1,\]
by the truncated Perron's formula \cite[p.~435]{Hildebrand1993}, and \[ \frac{1}{2\pi i} \int_{(\sigma)} n^{-s} \frac{x^s}{s}ds =\mathbf{1}_{x>n} + \frac{\mathbf{1}_{x=n}}{2}  \]
by Perron's formula, it follows that the integral in \eqref{eq:n} is bounded by
\[ \ll \frac{(x/n)^{\sigma}}{1+\max\{n,T\}|\log(x/n)|}\]
and so the total contribution of the $n$-sum in \eqref{eq:titch} to the left-hand side of \eqref{eq:2ndbnd} is
\begin{equation}\label{eq:newnsum}
 \ll x^{\sigma}\sum_{n \ge 1} \frac{n^{-\sigma}}{1+\max\{n,T\}|\log(x/n)|}.
\end{equation}
It remains to estimate \eqref{eq:newnsum}, which we do according to the size of $n$. The contribution of $n \ge 2x$ is 
\[ \ll x^{\sigma} \sum_{n \ge 2x} n^{-\sigma-1} \ll_{\varepsilon} 1.\]
The contribution of $n \in (x/2,2x)$ can be bounded by considering separately the $n$ closest to $x$, and partitioning the rest of the $n$s according to the value of $k\ge 0$ for which $|\log(x/n)| \in [2^{-k},2^{1-k})$:
\[ \ll x^{\sigma}\sum_{n \in (x/2,2x)} \frac{n^{-\sigma}}{1+x|\log (x/n)|}\ll 1+ \sum_{k \ge 0:\, 2^k \le 2x} \frac{x}{2^k} \frac{1}{1+x/2^k} \ll \log x.\]
The contribution of $n \le T/2$ is 
\[ \ll \frac{x^{\sigma}}{T} \sum_{n \le T/2}n^{-\sigma} \ll \frac{x^{\sigma}}{T^{\sigma}} \log T.\]
Finally, the contribution of $T/2<n\le x/2$ is
\[ \ll x^{\sigma} \sum_{n>T/2} n^{-1-\sigma} \ll_{\varepsilon} \frac{x^{\sigma}}{T^{\sigma}},\]
acceptable as well.
\end{proof}
\begin{cor} \label{cor:lambdatrunc}
Fix $\varepsilon \in (0,1)$. Suppose $x \ge y \ge C_{\varepsilon}$.  For $\sigma \in [\varepsilon,1]$ and $x \ge T \ge \max\{2,y^{1-\sigma}/\log y\}$ we have
\[ \Lambda(x,y) = \frac{1}{2\pi i}\int_{\sigma-iT}^{\sigma+iT} F(s,y) \frac{x^s}{s} ds+ O_{\varepsilon}\left( \frac{x^{\sigma}}{T^{\sigma}}\log T+\log x+ x^{\sigma} \frac{y^{1-\sigma}}{\log y}\frac{\log^{1/2} T}{T^{\min\{1,1/2+\sigma\}}} \right).\]
\end{cor}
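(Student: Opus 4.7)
The plan is to start from the unconditional Perron representation \eqref{eq:perronforlambda} for $\Lambda(x,y)$ at abscissa $\sigma \in [\varepsilon,1]$ (valid since $y \ge C_\varepsilon$) and bound the tail
\[ R := \frac{1}{2\pi i}\int_{\sigma+it:\,|t|>T}F(s,y)\frac{x^s}{s}\,ds.\]
The key structural observation is that on the line $\Re s = \sigma \le 1$, the argument $(s-1)\log y$ has negative real part $(\sigma-1)\log y$, and the third case of Lemma~\ref{lem:i bounds}, in the form \eqref{eq:third}, gives $w\hat\rho(w)=1+O(e^{-\Re w}/|\Im w|)$ whenever $e^{-\Re w}=O(|\Im w|)$. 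Taking $w=(s-1)\log y$, the hypothesis $e^{-\Re w}=y^{1-\sigma}=O(|t|\log y)$ is exactly the assumption $T\ge y^{1-\sigma}/\log y$, so on the tails
\[ F(s,y)=\zeta(s)\left(1+O\!\left(\frac{y^{1-\sigma}}{|t|\log y}\right)\right).\]

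Inserting this decomposition into $R$ produces two terms. The contribution of the leading $\zeta(s)$ to $R$ is controlled directly by Lemma~\ref{lem:zeta} and contributes $O_\varepsilon(x^\sigma T^{-\sigma}\log T + \log x)$. The contribution of the error term is bounded in absolute value by
\[ \frac{x^\sigma y^{1-\sigma}}{\log y}\int_T^\infty\frac{|\zeta(\sigma+it)|}{t^2}\,dt,\]
and the plan is to estimate this integral by dyadic decomposition: on each block $[T_j,2T_j]$ with $T_j=2^jT$ apply Cauchy--Schwarz,
\[ \int_{T_j}^{2T_j}\frac{|\zeta(\sigma+it)|}{t^2}\,dt\le T_j^{-3/2}\bigl(\textstyle\int_{T_j}^{2T_j}|\zeta(\sigma+it)|^2\,dt\bigr)^{1/2}.\]

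For $\sigma\ge 1/2$ the second moment bound of Lemma~\ref{lem:second} gives $\int_{T_j}^{2T_j}|\zeta|^2\ll T_j\log T_j$, yielding $\ll T_j^{-1}\log^{1/2}T_j$ per block; summing dyadically recovers the $T^{-1}$ term. For $\sigma<1/2$ one first invokes the functional equation (Lemma~\ref{lem:functional}) to write $|\zeta(\sigma+it)|\asymp_\varepsilon(|t|+4)^{1/2-\sigma}|\zeta(1-\sigma-it)|$, then applies Lemma~\ref{lem:second} at the abscissa $1-\sigma\ge 1/2$, obtaining $\int_{T_j}^{2T_j}|\zeta(\sigma+it)|^2\,dt\ll T_j^{2-2\sigma}\log T_j$, and hence $\ll T_j^{-1/2-\sigma}\log^{1/2}T_j$ per block. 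The two regimes combine to the uniform bound $\ll T^{-\min\{1,1/2+\sigma\}}\log^{1/2}T$, which produces the third error term in the statement.

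The step requiring the most care is the correct matching of the $\min\{1,1/2+\sigma\}$ exponent: a naive use of Lemma~\ref{lem:second} at $\sigma<1/2$ is unavailable, so the functional equation switch is essential, and one must verify that the resulting dyadic sums telescope to the clean bound stated. Aside from that, the argument is a routine combination of Perron truncation with mean-value bounds, and the hypothesis $T\ge y^{1-\sigma}/\log y$ is used in exactly one place — guaranteeing applicability of \eqref{eq:third} — which is the main conceptual input that turns the Laplace factor in $F(s,y)$ into a benign multiplicative perturbation of $1$.
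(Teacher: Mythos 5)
Your proposal is correct and follows essentially the same route as the paper: the same reduction of $F(s,y)$ to $\zeta(s)(1+O(y^{1-\sigma}/(|t|\log y)))$ via \eqref{eq:third} (this is exactly where the hypothesis $T\ge y^{1-\sigma}/\log y$ enters), Lemma~\ref{lem:zeta} for the main tail, and the dyadic Cauchy--Schwarz argument with Lemma~\ref{lem:second}, switching through the functional equation (Lemma~\ref{lem:functional}) when $\sigma<1/2$. The only detail the paper adds is the remark that for integer $x$ the Perron identity \eqref{eq:perronforlambda} holds up to an $O(1)$ error, which is absorbed in the $\log x$ term.
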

Corollary \ref{cor:lambdatrunc} rests on Lemma \ref{lem:zeta}, and makes more precise Proposition 2 of Saias \cite{Saias1989}. 
\begin{proof}
Our starting point is the identity \eqref{eq:perronforlambda}. (If $x \in \ZZ$ it still holds with an error term of $O(1)$, since the integral converges to the average $(\Lambda(x+,y)+\Lambda(x-,y))/2 = \Lambda(x,y)+O(1)$.)
From that identity it follows that our task is equivalent to upper bounding
\[ \left|\int_{\sigma+it:\, |t|>T} F(s,y) \frac{x^s}{s} ds\right|.\]
Recall $F(s,y) =  \hat{\rho}((s-1)\log y)\zeta(s)(s-1)\log y$.
By \eqref{eq:third} with $(s-1)\log y$ instead of $s$ we find
\[ F(s,y)=\zeta(s)\left(1+O\left(\frac{y^{1-\sigma}}{|t|\log y}\right)\right)\]
if $y^{1-\sigma} = O(|t| \log y)$, which holds by our assumptions on $T$. By the triangle inequality,
\begin{equation}\label{eq:lambda2int} 
\left|\int_{\sigma+it:\, |t|>T} F(s,y) \frac{x^s}{s} ds\right| \ll \left|\int_{\sigma+it:\, |t|>T} \frac{\zeta(s)}{s}x^s ds\right| + x^{\sigma}\frac{y^{1-\sigma}}{\log y} \int_{\sigma+it:\,|t|>T} \frac{|\zeta(s)|}{|t|^2} |ds| .
\end{equation}
The first integral in the right-hand side of \eqref{eq:lambda2int} is estimated in Lemma \ref{lem:zeta}.
To bound the second integral we apply the second moment estimate for $\zeta$ given in Lemma \ref{lem:second}. We first suppose that $\sigma \ge 1/2$. Using Cauchy--Schwarz, the second integral in the right-hand side of \eqref{eq:lambda2int} is at most
\begin{equation}
\int_{\sigma+it:\,|t|>T} \frac{|\zeta(s)|}{|t|^2} |ds| \ll \sum_{2^k \ge T/2}4^{-k} \int_{2^k}^{2^{k+1}} |\zeta(\sigma+it)|dt\ll \sum_{2^k \ge T/2}2^{-k} k^{1/2} \ll \frac{\log^{1/2} T}{T}.
\end{equation}
Multiplying this by the prefactor $x^{\sigma}y^{1-\sigma}/\log y$, we see that this is acceptable. If $\varepsilon \le \sigma \le 1/2$ we use Lemma \ref{lem:functional}. We obtain that the second integral in the right-hand side of \eqref{eq:lambda2int} is at most
\begin{equation}
	\int_{\sigma+it:\,|t|>T} \frac{|\zeta(s)|}{|t|^2} |ds| \ll \int_{1-\sigma+it:\,|t|>T} \frac{|\zeta(s)|}{|t|^{2+\sigma-1/2}} |ds|  \ll \sum_{2^k \ge T/2} 2^{-k(\sigma+1/2)}k^{1/2} \ll \frac{\log^{1/2} T}{T^{\frac{1}{2}+\sigma}},
\end{equation}
concluding the proof. 
\end{proof}
Let $\alpha=\alpha(x,y)$ be the saddle point associated with $y$-smooth numbers up to $x$ \cite{HildebrandTenenbaum1986}, that is, the  minimizer of the convex function $s\mapsto x^s \zeta(s,y)$ ($s>0$).
\begin{lem}\label{lem:psitrunc}
	For $\sigma\in (0,1]$, $x\ge y \ge C$ and  $T \ge 2$ we have
	\begin{equation}\label{eq:psiperron2} \Psi(x,y)=\frac{1}{2\pi i}\int_{\sigma-iT}^{\sigma+iT} \zeta(s,y)\frac{x^s}{s}ds+O\left( \frac{x^{\sigma}\zeta(\sigma,y)}{T} + \frac{\Psi(x,y) \log T}{T^{\alpha}}+1\right).
	\end{equation}
\end{lem}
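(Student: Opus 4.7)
}
This is a standard truncated Perron-type estimate, and I would prove it by starting from the exact Perron formula
\[\Psi(x,y) \;=\; \frac{1}{2\pi i}\int_{(\sigma)}\zeta(s,y)\frac{x^s}{s}\,ds\qquad(\sigma>0),\]
valid for non-integer $x$ (an $O(1)$ adjustment handles integer $x$, via the usual averaged Perron identity). Next, apply the classical pointwise truncated Perron bound
\[\frac{1}{2\pi i}\int_{\sigma-iT}^{\sigma+iT}\frac{(x/n)^s}{s}\,ds \;=\; \mathbf{1}_{n<x}+\tfrac{1}{2}\mathbf{1}_{n=x}+O\!\left(\min\!\left\{(x/n)^\sigma\log T,\;\frac{(x/n)^\sigma}{T\,|\log(x/n)|}\right\}\right)\]
termwise on the Dirichlet series $\zeta(s,y)=\sum_{n\text{ smooth}}n^{-s}$. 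The interchange of sum and integral is justified by absolute convergence since $\zeta(\sigma,y)<\infty$ for $\sigma>0$; the $n=x$ term (if $x\in\ZZ$) contributes the $O(1)$ of the claim.

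The problem then reduces to estimating
\[E \;:=\; x^\sigma\!\!\sum_{\substack{n\text{ smooth}\\ n\neq x}}\!\!n^{-\sigma}\min\!\left\{\log T,\;\frac{1}{T|\log(x/n)|}\right\},\]
which I would split according to whether $n$ lies in the \emph{far} range $|x-n|\ge x/2$ or the \emph{near} range $x/2<n<2x$, $n\ne x$. In the far range, $|\log(x/n)|\gg 1$, so the per-term bound collapses to $1/T$, and summation over $y$-smooth $n$ gives $\ll x^\sigma\zeta(\sigma,y)/T$, which is the first error term in the statement.

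The main work is in the near range. I would perform a dyadic decomposition on $|x-n|\in [2^k,2^{k+1})$ for $0\le k\le\log_2(x/2)+O(1)$; on such a dyadic shell, $|\log(x/n)|\asymp 2^k/x$, and the Perron error per term is $\ll\min\{\log T,\;x/(2^k T)\}$. The count of $y$-smooth integers in the interval $[x-2^{k+1},x+2^{k+1}]$ is controlled via Rankin's inequality $\Psi(X,y)\le X^s\zeta(s,y)$ applied at the saddle point $s=\alpha$ (and using its minimizing property, $-\zeta'(\alpha,y)/\zeta(\alpha,y)=\log x$), which yields a short-interval bound of shape $(2^k/x)^\alpha\Psi(x,y)$ up to lower-order factors. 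Substituting this into the dyadic sum produces a geometric series in $k$ whose dominant contribution is $\Psi(x,y)\log T/T^\alpha$, matching the second error term.

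\emph{Main obstacle.} The nontrivial step is the near-$x$ piece: one must combine the short-interval estimate for $\Psi$ with the dyadic summation in a way that exploits the saddle point $\alpha$ to produce the exponent $T^\alpha$ rather than $T^1$. The far range and the boundary $n=x$ contribution are routine once the termwise truncated Perron identity is in place.
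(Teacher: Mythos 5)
Your overall skeleton (termwise truncated Perron, a far/near split around $n=x$, dyadic decomposition of the near range, and a saddle-point exponent $T^{\alpha}$) is the same as the paper's, and the far range and the $n=x$/integer-$x$ adjustments are handled correctly. The genuine gap is in the one step that carries all the difficulty: your claimed short-interval estimate. You assert that Rankin's inequality $\Psi(X,y)\le X^{s}\zeta(s,y)$ at $s=\alpha$ ``yields a short-interval bound of shape $(2^k/x)^{\alpha}\Psi(x,y)$'' for the number of $y$-smooth integers in $[x-2^{k+1},x+2^{k+1}]$. Rankin's inequality gives no localization at all: applied to $X=x+2^{k+1}$ it only bounds the \emph{global} count by $x^{\alpha}\zeta(\alpha,y)$, which by Hildebrand--Tenenbaum is of size $\Psi(x,y)\sqrt{(\log x)(\log y)}$, i.e.\ larger than $\Psi(x,y)$ and nothing like $(2^k/x)^{\alpha}\Psi(x,y)$. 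The localization is exactly the non-trivial input the paper supplies: Hildebrand's inequality $\Psi(a+b,y)-\Psi(a,y)\le\Psi(b,y)$ (valid for $a,b\ge y$, used in the form $\le\Psi(b,y)+1$ for general $a,b$), which bounds the count in an interval of length $h$ by $\Psi(h,y)+1$, followed by La Bret\`eche--Tenenbaum's Theorem~2.4, $\Psi(x/d,y)\ll\Psi(x,y)/d^{\alpha}$ for $1\le d\le x$, to convert $\Psi(h,y)$ into $\Psi(x,y)(h/x)^{\alpha}$. Without some such ingredient your dyadic sum cannot be closed.

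Even if you patch the localization with Hildebrand's inequality, bounding $\Psi(h,y)$ by Rankin at the saddle point of $x$ and then comparing with $\Psi(x,y)$ via the Hildebrand--Tenenbaum asymptotic loses a factor $\asymp\sqrt{(\log x)(\log y)}$, so you would prove the lemma only with error $\Psi(x,y)(\log T)\sqrt{(\log x)(\log y)}/T^{\alpha}$ rather than $\Psi(x,y)\log T/T^{\alpha}$ as stated (this is presumably your ``up to lower-order factors''). The cleaner route, and the one the paper takes, is to invoke $\Psi(x/d,y)\ll\Psi(x,y)/d^{\alpha}$ directly, which avoids that loss; the paper also quietly uses $\Psi(Cx,y)\ll\Psi(x,y)$ to absorb constants, and keeps the ``$+1$'' terms to cover intervals so short that $(h/x)^{\alpha}\Psi(x,y)<1$. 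So: right architecture, but the key lemma you lean on does not do what you need, and the statement's exact error term requires the stronger local input.
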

Our proof makes more precise a similar estimate appearing in Saias \cite[p.~98]{Saias1989}, which does not allow general $y$ and $T$ but contains the main ideas.
\begin{proof}
	The truncated Perron's formula \cite[p.~435]{Hildebrand1993} bounds the error in \eqref{eq:psiperron2} by
	\[ \ll x^{\sigma}\sum_{\substack{n\ge 1\\ n\text{ is }y\text{-smooth}}}\frac{1}{n^{\sigma}(1+T|\log(x/n)|)}.\]
	The contribution of the terms with $|\log(x/n)|\ge 1$ is
	\[ \ll \frac{x^{\sigma}}{T} \sum_{\substack{n\ge 1\\n\text{ is } y\text{-smooth}}} \frac{1}{n^{\sigma}} = \frac{x^{\sigma}\zeta(\sigma,y)}{T}.\]
	We now study the terms with $|\log(x/n)|<1$. These contribute
	\begin{equation}\label{eq:nclosex}\ll \sum_{\substack{e^{-1}x<n<ex\\n\text{ is }y\text{-smooth}}} \frac{1}{1+T|\log (x/n)|}.
	\end{equation}
	The subset of terms with $|\log(x/n)| \le 1/T$ contributes to \eqref{eq:nclosex}
	\begin{equation}\label{eq:xnclose} \ll \sum_{\substack{|n-x| \le Cx/T \\ n\, y\text{-smooth}}} 1 \ll \Psi\left(x+ \frac{Cx}{T},y\right)-\Psi\left(x-\frac{Cx}{T},y\right).
	\end{equation}
	The contribution of the rest of the terms to to \eqref{eq:nclosex}, namely, those terms with $1/T<|\log(x/n)| <1$, can be dyadically dissected to terms with $|\log(x/n)| \in [2^{-k},2^{1-k})$ for each integer $k\ge 1$ such that $2^k<2T$ holds. Their total contribution is
	\begin{equation}\label{eq:dyadicpsi} \ll \frac{1}{T}\sum_{1 \le k \le \log_2 T + 1} 2^k\left(\Psi\left(x+ \frac{Cx}{2^k},y\right)-\Psi\left(x-\frac{Cx}{2^k},y\right)\right),
	\end{equation}
	where $\log_2$ is the base-2 logarithm. (We interpret $\Psi(a,y)$ for negative $a$ as equal to $0$.) Note that the sum in \eqref{eq:dyadicpsi} dominates the right-hand side of \eqref{eq:xnclose}.
	We shall make use of Hildebrand's inequality $\Psi(a+b,y)-\Psi(a,y) \le \Psi(b,y)$, valid for $y \ge C$ and $a,b \ge y$. It implies
	\begin{equation}\label{eq:hildineq}
	\Psi(a+b,y)-\Psi(a,y) \le \Psi(b,y)+1
\end{equation}
for $y\ge C$ and all $a,b$. We apply \eqref{eq:hildineq} with $a=x-Cx/2^k$ and $b=2Cx/2^k$ to find that \eqref{eq:dyadicpsi} is bounded by
	\begin{equation}\label{eq:dyadicpsi2}
		\ll	\frac{1}{T}\sum_{1 \le k \le \log_2 T + 1} 2^k\left(\Psi\left(\frac{Cx}{2^k},y\right)+1\right) \ll \frac{1}{T}\sum_{1 \le k \le \log_2 T + 1} 2^k\left(\Psi\left(\frac{x}{2^k},y\right)+1\right)
	\end{equation}
	where in the second inequality we replaced $\Psi(Cx,y)$ with $\Psi(x,y)$ using \cite[Thm.~3]{HildebrandTenenbaum1986}.	To conclude, we recall Theorem 2.4 of \cite{LaBreteche2005} says $\Psi(x/d,y) \ll \Psi(x,y)/d^{\alpha}$ holds for $x \ge y \ge 2$ and $1 \le d \le x$. We apply this inequality with $d=2^k$ and obtain
	\begin{align}\label{eq:dyadicpsi22}
		\frac{1}{T}\sum_{1 \le k \le \log_2 T + 1} 2^k\left(\Psi\left(\frac{x}{2^k},y\right)+1\right) &\ll 1+\frac{\Psi(x,y)}{T}\sum_{1 \le k \le \log_2 T + 1} 2^{(1-\alpha)k}\ll 1+\frac{\Psi(x,y)\log T}{T^{\alpha}}
	\end{align}
	as needed.
\end{proof}
\subsection{Proof of Proposition \ref{prop:medu}}
We first truncate the Perron integral for $\Psi(x,y)$. We apply Lemma \ref{lem:psitrunc} with $\sigma = \betasad$ and $T=\exp ((\log y)^{4/3})$. The assumption $y \ge (\log x)^{1+\varepsilon}$ implies $\betasad \gg_{\varepsilon} 1$ and $\Psi(x,y) \ge x^{c_{\varepsilon}}$. Since $\alpha=\beta+O(1/\log y)$ \cite[Lem.~2]{HildebrandTenenbaum1986} it follows that $\alpha \gg_{\varepsilon} 1$ and so
\begin{equation} \Psi(x,y)=\frac{1}{2\pi i}\int_{\betasad-iT}^{\betasad+iT} \zeta(s,y)\frac{x^s}{s}ds+O_{\varepsilon}\left( \frac{x^{\betasad}\zeta(\betasad,y)+\Psi(x,y)}{T^{c_{\varepsilon}}}\right).
\end{equation}
We use Lemma \ref{lem:zetaratio} to bound the contribution of $1/\log y \le |\Im s| \le T$:
\begin{align} 
\int_{\betasad+i/\log y}^{\betasad+iT} \zeta(s,y)\frac{x^s}{s}ds &\ll x^{\betasad} \zeta(\betasad,y) \int_{1/\log y}^{T} \left|\frac{\zeta(\betasad+it,y)}{\zeta(\betasad,y)}\right|\frac{dt}{\betasad+t}\\
& \ll x^{\betasad} \zeta(\betasad,y) \int_{1/\log y}^{T} \exp\left(-\frac{c ut^2}{(1-\betasad)^2+t^2}\right)\frac{dt}{\betasad+t}\\
& \ll x^{\betasad} \zeta(\betasad,y) \left( \exp(-c u)\log T + \int_{1/\log y}^{\xi(u)/\log y} \exp\left(-\frac{c (\log x)( \log y)}{\log^2 (u+1)}t^2\right)dt \right)\\
& \ll x^{\betasad} \zeta(\betasad,y)\exp\left(-\frac{c u}{\log^2(u+1)}\right).
\end{align}
We estimate $x^{\betasad} \zeta(\betasad,y)$:
\begin{equation}\label{eq:xbetazetaest}
\begin{split}
x^{\betasad}\zeta(\betasad,y) &= \frac{x}{e^{u\xi(u)}}F(\betasad,y) G(\betasad,y)\\
&=\zeta(\beta)(\beta-1) \frac{x  e^{I(\xi)+\gamma}\log y}{e^{u\xi(u)}} G(\betasad,y) \ll_{\varepsilon} x\rho(u)\sqrt{(\log x)( \log y)} G(\betasad,y)
\end{split}
\end{equation}
using \eqref{eq:Fdef} and Lemma \ref{lem:rho size}. Finally, note that both $T$ and $\exp(u/\log^2(u+1))$ grow faster than any power of $\log x$. 
We turn to $\Lambda(x,y)$. We apply Corollary \ref{cor:lambdatrunc} with $\sigma=\betasad$ and 
\[ T = \frac{y^{1-\betasad}}{\log y} = \frac{e^{\xi(u)}}{\log y} \asymp \frac{u \log (u+1)}{\log y} \gg (\log \log y)^4.\]
We obtain
\[ \Lambda(x,y) = \frac{1}{2\pi i}\int_{\betasad-iT}^{\betasad+iT} F(s,y) \frac{x^s}{s} ds+ O_{\varepsilon}\left(  \frac{ux}{\exp(u\xi)}\right).\]
We now treat the range $1/\log y\le | \Im s| \le T$. By the definition of $F$,
\begin{equation}\label{eq:1toT}
\int_{\betasad+\frac{i}{\log y}}^{\betasad+iT} F(s,y) \frac{x^s}{s} ds\ll_{\varepsilon} \frac{x\log y}{\exp(u\xi)} \int_{1/\log y}^{T} |\zeta(\betasad+it)|  |\hat{\rho}(-\xi(u)+it \log y)| dt.
\end{equation}
First suppose $t \ge \pi/\log y$. By the second case of Lemma \ref{lem:i bounds}, this range contributes
\begin{equation}\label{eq:lambdatruncmiddle}
	\begin{split}
 &\ll_{\varepsilon} \frac{x \exp(I(\xi))\log y}{\exp(u\xi)} \exp\left( - \frac{u}{\pi^2 +\xi^2}\right) \int_{\pi/\log y}^{T} |\zeta(\betasad+it)|dt\\
& \ll x\rho(u) \sqrt{(\log x)(\log y)}\exp\left(- \frac{u}{\pi^2+\xi^2}\right)\int_{\pi/\log y}^{T} |\zeta(\betasad+it)|dt
	\end{split}
\end{equation}
using Lemma \ref{lem:rho size} in the second inequality.
Recall the second moment estimate for $\zeta$ given in Lemma \ref{lem:second}. It shows that right-hand side of \eqref{eq:lambdatruncmiddle} is bounded by
\[  \ll x\rho(u) \sqrt{(\log x )(\log y)}\exp\left(- \frac{u}{\pi^2+\xi^2}\right) T^{\max\{1,3/2-\betasad\}}\sqrt{\log T}\]
where we used the functional equation if $\betasad < 1/2$ (Lemma \ref{lem:functional}).
The contribution of $1/\log y \le t \le \pi/\log y$ to the right-hand side of \eqref{eq:1toT} is treated using the first part of Lemma \ref{lem:i bounds}, and we find that it is at most
\begin{equation}
\ll_{\varepsilon} \frac{x  \exp(I(\xi))\log y}{\exp(u\xi)} \int_{1/\log y}^{\pi/\log y} \exp\bigg(- \frac{(\log x)( \log y)}{2\pi^2}t^2\bigg)dt 
\ll_{\varepsilon}  x\rho(u) \exp(-cu),
\end{equation}
using Lemma \ref{lem:rho size} in the second inequality. In conclusion,
\[ \Lambda(x,y) = \frac{1}{2\pi i}\int_{\betasad-i/\log y}^{\betasad+i/\log y} F(s,y) \frac{x^s}{s} ds+ E\]
where
\[ E \ll_{\varepsilon} \frac{ux}{\exp(u\xi)} + x\rho(u) \left( \sqrt{(\log x)( \log y)}\exp\left(- \frac{u}{\pi^2+\xi^2}\right) T^{\max\{1,3/2-\betasad\}}\sqrt{\log T} + \exp(-cu)\right).\]
By our choice of $T$ and assumptions on $u$ and $y$, this can be absorbed in the error term of \eqref{eq:lambmed}.

\subsection{Proof of Proposition \ref{prop:small}}
We first truncate the Perron integral for $\Psi(x,y)$. We apply Lemma \ref{lem:psitrunc} with $\sigma=\betasad$ and our $T$, finding
\begin{equation} 
\Psi(x,y)=\frac{1}{2\pi i}\int_{\betasad-iT}^{\betasad+iT} \zeta(s,y)\frac{x^s}{s}ds+O\left( 1+ \frac{\Psi(x,y)\log T}{T^{\alpha}}+\frac{x^{\betasad}\zeta(\betasad,y)}{T}\right).
\end{equation}
In the considered range, $\Psi(x,y) \asymp x \rho(u)$. In particular, the error term $O(1)$ is acceptable since our $T$ is $\ll x\rho(u) \ll \Psi(x,y)$ and so $1 \ll \Psi(x,y) /T^{4/5}$. Additionally, $\beta \sim 1$ as $x \to \infty$ by Lemma \ref{lem:xilem} and $\alpha=\beta+O(1/\log y)$ \cite[Lem.~2]{HildebrandTenenbaum1986}, so $\alpha \sim 1$. This implies that $(\log T)/T^{\alpha} \ll 1/T^{4/5}$ and the error term $O(\Psi(x,y)(\log T)/T^{\alpha})$ is also acceptable. The estimate \eqref{eq:xbetazetaest} treats the last error term and finishes the estimation. We turn to $\Lambda(x,y)$. We apply Corollary \ref{cor:lambdatrunc} with our $T$, obtaining 
	\begin{equation}\label{eq:trunlambdasmall}
	\Lambda(x,y) = \frac{1}{2\pi i}\int_{\betasad-iT}^{\betasad+iT} F(s,y) \frac{x^s}{s} ds+ O\left(\log x +x  \exp(-u\xi) u \log (u+1) \frac{\log T}{T^{\sigma}}\right).
\end{equation}
In our range $x\rho(u) \asymp x^{1+o(1)}$, so the term $\log x$ is acceptable. We have $\exp(-u\xi) u\log (u+1) \ll \rho(u)$ by Lemma \ref{lem:rho size}, so the second term in the error term of \eqref{eq:trunlambdasmall} is also acceptable.
\section{Proofs of Theorems \ref{thm:psierror} and \ref{thm:psierror2}}\label{sec:rherror}
\begin{proposition}[Medium $u$]\label{prop:large}
Suppose $x \ge y \ge 2$ satisfy \[u \ge (\log y) ( \log \log y)^3.\]
Fix $\varepsilon >0$ and suppose $y \ge (\log x)^{1+\varepsilon}$ and $x \ge C_{\varepsilon}$. Let 
\[t_0:= (\log x)^{-1/3}(\log y)^{-2/3}, \qquad T := \exp(\min\{u/\log^2(u+1),(\log y)^{4/3}\}).\] Then $\Psi(x,y) = \Lambda(x,y)  G(\betasad,y) ( 1 + E)$ for
\begin{equation}
E \ll_{\varepsilon} \frac{ |G'(\betasad,y)|}{G(\betasad,y)\log x}  + \frac{ \max_{|v|\le t_0}|G''(\betasad+iv,y)|}{G(\betasad,y)(\log x)(\log y)}+\frac{\max_{|v|\le \frac{1}{\log y}} |G'(\betasad+iv,y)|\exp(-u^{1/3}/20)}{G(\betasad,y)\log x} + \frac{1}{T^{c_{\varepsilon}}}.
\end{equation}
\end{proposition}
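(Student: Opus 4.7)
The approach is to apply Proposition \ref{prop:medu} to both $\Psi(x,y)$ and $\Lambda(x,y)$, subtract $G(\beta,y)$ times the latter from the former, and use $\zeta(s,y) - G(\beta,y) F(s,y) = (G(s,y) - G(\beta,y)) F(s,y)$ to obtain
\[
\Psi(x,y) - \Lambda(x,y) G(\beta,y) = \frac{1}{2\pi i}\int_{\beta - i/\log y}^{\beta + i/\log y} (G(s,y) - G(\beta,y)) F(s,y) \frac{x^s}{s}\, ds + O_{\varepsilon}\!\left(\frac{\Psi(x,y) + x\rho(u) G(\beta,y)}{T^{c_{\varepsilon}}}\right).
\]
Dividing by $\Lambda(x,y) G(\beta,y)$ and using $\Lambda(x,y) \asymp_{\varepsilon} x\rho(u)$ from \eqref{eq:lambdasize}, the truncation error feeds into the $1/T^{c_{\varepsilon}}$ term of $E$ after a short bootstrap controlling $\Psi(x,y)/(\Lambda(x,y) G(\beta,y))$. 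The rest of the proof estimates the short integral on the right, divided by $\Lambda(x,y) G(\beta,y)$, by the first three terms of $E$.

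Writing $s = \beta + iv$, I split the integration at $|v| = t_0$. On the outer range $t_0 \le |v| \le 1/\log y$, I apply the mean value bound $|G(\beta + iv, y) - G(\beta, y)| \le |v| \max_{|w| \le 1/\log y} |G'(\beta + iw, y)|$ and the Gaussian decay of $|F(\beta + iv, y) x^{\beta + iv}|$ arising from the first case of Lemma \ref{lem:i bounds} (at $w = v\log y$) combined with the cancellation $\hat{\rho}(-\xi + iw) x^{iw/\log y} = \hat{\rho}(-\xi) \exp(-\tfrac{1}{2} w^2 I''(\xi) + O(w^3 I'''(\xi)))$. The choice $t_0 = (\log x)^{-1/3}(\log y)^{-2/3}$ is calibrated so that $t_0^2 (\log x)(\log y) = u^{1/3}$, producing an exponential factor $\exp(-u^{1/3}/(2\pi^2))$; together with the saddle-point normalisation $|F(\beta,y)| x^{\beta} \asymp \Lambda(x,y)\sqrt{(\log x)(\log y)}$ (from \eqref{eq:LaBTT} and Lemma \ref{lem:rho size}), this matches the third term of $E$.

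On the inner range $|v| \le t_0$, I Taylor expand
\[
G(\beta + iv, y) - G(\beta,y) = iv\, G'(\beta,y) + R(v), \qquad |R(v)| \le \tfrac{1}{2} v^2 \max_{|w|\le t_0} |G''(\beta + iw, y)|.
\]
The remainder $R(v)$, combined with the Gaussian bound on $|F(s,y) x^s/s|$, contributes $\int v^2 e^{-cv^2(\log x)(\log y)}\, dv \asymp [(\log x)(\log y)]^{-3/2}$ which matches the second term of $E$ after multiplying by the prefactor. For the main linear term, I use $iv = s - \beta$ to decompose
\[
\int (s-\beta) F(s,y) \frac{x^s}{s}\, ds = \int F(s,y) x^s\, ds - \beta \int F(s,y) \frac{x^s}{s}\, ds.
\]
A truncation analogue of Corollary \ref{cor:lambdatrunc} for $F(s,y) x^s$ (obtained by differentiating the Perron formula for $\Lambda$ in $\log x$) identifies these two bracketed integrals, up to acceptable error, with $2\pi i (\Lambda(x,y) + x\lambda_y'(u)/\log y)$ and $2\pi i \Lambda(x,y)$ respectively. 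Using the sharp asymptotic $\lambda_y'(u)/\lambda_y(u) = -\xi(u) - 1/u + O(1/u^2)$, which follows from $u\rho'(u) = -\rho(u-1)$ together with the refined estimate $\rho(u-1)/\rho(u) = e^{\xi(u)}(1 + O(1/u))$ and the smoothness of $K(-\xi/\log y)$, the two bracketed quantities cancel to leading order, yielding
\[
\int (s-\beta) F(s,y) \frac{x^s}{s}\, ds \ll \frac{\Lambda(x,y)}{\log x}.
\]
Multiplying by $G'(\beta,y)$ and dividing by $\Lambda(x,y) G(\beta,y)$ produces the first term of $E$.

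The main obstacle is establishing this sharp cancellation with a $1/\log x$ factor rather than the weaker $\log(u+1)/\log x$ that a crude Taylor-saddle estimate would give. This requires the refined asymptotic for $\lambda_y'/\lambda_y$ with error $O(1/u^2)$ rather than $O(\xi(u)/u)$, together with a Perron-type truncation analogue for $\int F(s,y) x^s\, ds$ paralleling Corollary \ref{cor:lambdatrunc}. With these in hand, the remainder is a fairly standard saddle-point analysis using the Gaussian bounds for $F$ and the Taylor expansion of $G$.
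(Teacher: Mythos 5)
Your overall skeleton (truncate via Proposition \ref{prop:medu}, split at $|v|=t_0$, mean-value bound plus Gaussian decay on $t_0\le|v|\le 1/\log y$, Taylor remainder in $G''$ on $|v|\le t_0$) matches the paper's proof, and those parts are fine. The problem is your treatment of the linear term $iv\,G'(\betasad,y)$, which you yourself identify as the main obstacle. The paper disposes of it with a symmetry observation that you miss: writing the integrand as $(\log y)K(s-1)x^{\betasad}e^{\gamma+I(\xi)}\exp\bigl(I(\xi-it\log y)-I(\xi)+it\log x\bigr)$ and expanding, the leading factor is the \emph{even} Gaussian $\exp\bigl(-\tfrac{t^2}{2}I''(\xi)\log^2 y\bigr)$, so the odd integrand $t\exp\bigl(-\tfrac{t^2}{2}I''(\xi)\log^2 y\bigr)$ integrates to zero over $[-t_0,t_0]$; the linear term therefore only contributes through the corrections $O_{\varepsilon}(|t|)$ from $K(s-1)$ and $O(|t|^3(\log x)(\log y)^2)$ from the cubic term of $I$, and the second Gaussian moment then gives exactly the $|G'(\betasad,y)|/\log x$ bound (the crude triangle-inequality estimate loses a factor of order $\sqrt{u}$, as you sensed).

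Your substitute route is not justified as written. First, the identification of $\frac{1}{2\pi i}\int F(s,y)x^s\,ds$ with $\Lambda(x,y)+x\lambda_y'(u)/\log y$ cannot be obtained by ``differentiating the Perron formula in $\log x$'': $\Lambda(\cdot,y)$ is not differentiable -- it has jumps of size $\asymp 1$ at every integer (cf.\ $\Lambda(x,x)=\lfloor x\rfloor$ and the remark that $\lambda_y(u)=\lambda_y(u-)+O(1/x)$ at $y^u\in\ZZ$) -- so the relevant object is a measure whose unit atoms nearly cancel an absolutely continuous part, and relating your short integral over $|v|\le t_0$ to it would require a new truncated-Perron analysis for $F(s,y)x^s$ (no $1/s$ decay, so the tails are not covered by Corollary \ref{cor:lambdatrunc} or Proposition \ref{prop:medu}). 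Second, the asymptotic $\lambda_y'(u)/\lambda_y(u)=-\xi(u)+O(1/u)$ (let alone with error $O(1/u^2)$) is nowhere proved and does not follow from the ingredients you cite: one cannot differentiate the asymptotic \eqref{eq:LaBTT}/Proposition \ref{prop:labte}, and a genuine proof would require redoing the appendix argument with $\rho'$ in place of $\rho$. Since the whole point of the proposition is the clean $1/\log x$ factor in the first term of $E$, these unproved steps constitute a real gap; replacing them by the vanishing-first-moment argument above repairs the proof and is much shorter.
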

\begin{proof}
Our strategy is to establish $\Psi(x,y) = \Lambda(x,y) G(\betasad,y)  (1 + E_1 + E_2) + E_3$ for
	\begin{align}
		E_1 &\ll_{\varepsilon} \frac{ |G'(\betasad,y)|}{G(\betasad,y)\log x} + \frac{ \max_{|v|\le t_0}|G''(\betasad+iv,y)|}{G(\betasad,y)(\log x) (\log y)},\\
		E_2 & \ll_{\varepsilon} \frac{\max_{|v|\le \frac{1}{\log y}} |G'(\betasad+iv,y)|\exp(-u^{1/3}/20)}{G(\betasad,y)\log x},\\
		E_3 &\ll_{\varepsilon} \frac{\Psi(x,y)+x\rho(u)G(\betasad,y)}{T^{c_{\varepsilon}}}.
	\end{align}
The theorem will then follow by rearranging, once we recall that $x\rho(u) \asymp_{\varepsilon} \Lambda(x,y)$. From Proposition \ref{prop:medu},
\begin{equation}\label{eq:diffintmed}
\begin{split}
\Psi(x,y) &-\Lambda(x,y)G(\betasad,y) \\
&=\frac{1}{2\pi i} \int_{\betasad-\frac{i}{\log y}}^{\betasad+\frac{i}{\log y}} ( G(s,y)-G(\betasad,y))F(s,y) \frac{x^s}{s}ds + O_{\varepsilon}\left( \frac{\Psi(x,y)+x\rho(u) G(\betasad,y)}{T^{c_{\varepsilon}}}  \right),
\end{split}
\end{equation}
which explains $E_3$.
Let $t_0$ be as in the statement of the proposition.
We upper bound the contribution of $t_0 \le |\Im s| \le 1/\log y$ to the integral in the right-hand side of \eqref{eq:diffintmed}.
We have
\[ |G(s,y)-G(\betasad,y)| \le |\Im s| \max_{ |t|\le |\Im s|} |G'(\betasad+it,y)|.\]
The triangle inequality shows, by definition of $F$, that
\begin{equation}
\int_{\betasad+it_0}^{\betasad+\frac{i}{\log y}} ( G(s,y)-G(\betasad,y))F(s,y) \frac{x^s}{s}ds \ll_{\varepsilon} \max_{|t| \le \frac{1}{\log y}} |G'(\betasad+it,y)| x^{\betasad} \log y \int_{t_0}^{\frac{1}{\log y}} t |e^{I(\xi-it\log y)}| dt.
\end{equation}
Since $-e^{-v^2/2}$ is the antiderivative of $e^{-v^2/2}v$, the first part of Lemma \ref{lem:i bounds} shows
\begin{align}
\int_{t_0}^{\frac{1}{\log y}} t |e^{ I(\xi-it\log y)}| dt &\ll \exp(I(\xi))\int_{t_0}^{\frac{1}{\log y}} t \exp(-(\log x) (\log y)t^2/(2\pi^2)) dt\\
&\ll \exp(I(\xi)) \frac{\exp(-u^{1/3}/(2\pi^2))}{(\log x)( \log y)}.
\end{align}
 Hence $t_0 \le |\Im s| \le 1/\log y$ contributes in total
\[ \ll_{\varepsilon} \max_{|t| \le 1/\log y}|G'(\betasad+it,y)| x\rho(u)\exp(-u^{1/3}/20)/\log x\]
where we used Lemma \ref{lem:rho size} to simplify.
Once we divide this by $\Lambda(x,y)G(\betasad,y) \asymp_{\varepsilon} x\rho(u)G(\betasad,y)$ we obtain the error term $E_2$. It remains to study the contribution of $|\Im s|\le t_0$ to the integral in the right-hand side of \eqref{eq:diffintmed}, which will yield $E_1$. We Taylor-expand the integrand at $s=\betasad$. We write $s=\betasad+it$, $|t|\le t_0$. We first simplify the integrand using the definition of $F$:
\begin{align}
\frac{F(s,y)x^{s}}{s} &= (\log y) K(s-1) e^{\gamma+I(\xi)}x^{\betasad+it} \exp(I(\xi-it \log y)-I(\xi))\\
&=  (\log y) K(s-1) x^{\betasad}e^{\gamma+I(\xi)} \exp(I(\xi-it \log y)-I(\xi)+it \log x).
\end{align}
We Taylor-expand $\log K(s-1)$ and $G(s,y)-G(\betasad,y)$:
\begin{align}
K(s-1) &= K(\betasad-1) (1+O_{\varepsilon}(t)),\\
G(s,y)-G(\betasad,y) &= itG'(\betasad,y)+ O(t^2 \max_{|v|\le t} |G''(\betasad+iv,y)|).
\end{align}
We expand $I(\xi-it \log y)-I(\xi)+it \log x$: 
\begin{equation}
	I(\xi-it \log y)-I(\xi)+it \log x = -\frac{t^2}{2}I''(\xi)\log^2 y + O( |t|^3 (\log x) (\log y)^2),
\end{equation}
where we used $I'(\xi(u))=u$ and $I^{(3)}(\xi(u)+it) \ll e^{\xi(u)}/(1+\xi(u)) \asymp u$. This implies
\begin{equation}\label{eq:taylorexpandularge}
	\exp(I(\xi-it \log y)-I(\xi)-it\log y) = \exp\left(-\frac{t^2}{2}I''(\xi)\log^2 y\right)( 1+ O( |t|^3 (\log x) (\log y)^2))
\end{equation}
for $|t| \le t_0$. By two basics properties of moments of the gaussian,
\begin{equation}\label{eq:gauss}
	\begin{split}
	\int_{-t_0}^{t_0} t\exp\left(-\frac{t^2}{2}I''(\xi)\log^2 y\right) dt &= 0,\\
	\int_{-t_0}^{t_0} |t|^{k}\exp\left(-\frac{t^2}{2}I''(\xi)\log^2 y\right) dt &\ll_k (I''(\xi)\log^2 y)^{-\frac{k+1}{2}} \ll_k ((\log x)(\log y))^{-\frac{k+1}{2}},
\end{split}	
\end{equation}
we find
\begin{equation}
\int_{\betasad-it_0}^{\betasad+it_0} ( G(s,y)-G(\betasad,y))F(s,y) \frac{x^s}{s}ds\ll_{\varepsilon} x^{\betasad}e^{I(\xi)} \left( \frac{|G'(\betasad,y)|\sqrt{\log y}}{(\log x)^{3/2} } +  \frac{\max_{|v|\le t_0}|G''(\betasad+iv)|}{(\log x)^{3/2}(\log y)^{1/2}}\right).
\end{equation}
By Lemma \ref{lem:rho size}, we can replace $x^{\betasad} e^{I(\xi)}$ with $x\rho(u)\sqrt{u}$, to obtain
\begin{equation}\label{eq:tay}
\int_{\betasad-it_0}^{\betasad+it_0} ( G(s,y)-G(\betasad,y))F(s,y) \frac{x^s}{s}ds\ll_{\varepsilon} x\rho(u)\left( \frac{|G'(\betasad,y)|}{\log x} +  \frac{\max_{|v|\le t_0}|G''(\betasad+iv)|}{(\log x)(\log y)}\right).
\end{equation}
Dividing by $G(\betasad,y)\Lambda(x,y) \asymp_{\varepsilon} G(\betasad,y) x\rho(u)$ gives the error term $E_1$.
\end{proof}
\begin{proposition}[Small $u$]\label{prop:smallu}
Suppose $x \ge y \ge C$ satisfy 
\begin{equation}\label{eq:smallurange}
	u \le (\log y) ( \log \log y)^3.
	\end{equation}
	 Let
	\begin{equation}
		t_0:= (\log x)^{-1/3}(\log y)^{-2/3}, \quad 
		t_1: = \frac{u\log(u+1)}{\log y},\quad	t_2 \in [(\log x)^{5}, y^{4/5}].
	\end{equation}
Then $	\Psi(x,y) = \Lambda(x,y) G(\betasad,y) ( 1 + E)$ for \begin{multline}
		E \ll \frac{ |G'( \betasad,y)|}{\log x} + \frac{ \max_{|v|\le t_0}|G''( \betasad+iv,y)|}{(\log x)(\log y)}+ \frac{\max_{|v|\le t_1} |G'( \betasad+iv,y)|\exp(-u^{1/3}/20)}{\log x}+t_2^{-4/5} \\+ \exp(-u/2)\left( \max_{|t|\le t_2}\left| \frac{G( \betasad+it,y)}{G( \betasad,y)}-1\right| +\left|\int_{t_1 \le |t| \le t_2} K( \betasad+it-1)x^{it} \frac{G( \betasad+it,y)-G( \betasad,y)}{G( \betasad,y)}\frac{dt}{t}\right|\right).
	\end{multline}
\end{proposition}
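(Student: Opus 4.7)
The plan is to follow the structure of the proof of Proposition \ref{prop:large}, replacing the Proposition \ref{prop:medu} truncation with Proposition \ref{prop:small} at $T=t_2$, and partitioning the resulting contour integral into three subranges corresponding to the three cases of Lemma \ref{lem:i bounds}. Applying Proposition \ref{prop:small} to both $\Psi$ and $\Lambda$ and subtracting $G(\betasad,y)$ times the truncated $\Lambda$-formula from the truncated $\Psi$-formula gives
\begin{equation*}
\Psi(x,y) - \Lambda(x,y)G(\betasad,y) = \frac{1}{2\pi i}\int_{\betasad-it_2}^{\betasad+it_2}(G(s,y)-G(\betasad,y))F(s,y)\frac{x^s}{s}\,ds + O\!\left(\frac{\Psi(x,y)+x\rho(u)G(\betasad,y)}{t_2^{4/5}}\right).
\end{equation*}
Dividing by $\Lambda(x,y)G(\betasad,y)\asymp x\rho(u)G(\betasad,y)$, and self-bounding $\Psi/(\Lambda G)$ using the smallness of $1/t_2^{4/5}$, extracts the $t_2^{-4/5}$ contribution to $E$. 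Decompose the remaining integral as $I_0+I_1+I_2$ over $|t|\le t_0$, $t_0\le|t|\le t_1$, $t_1\le|t|\le t_2$.

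The estimate of $I_0$ is the Taylor-expansion computation of Proposition \ref{prop:large} verbatim: the first case of Lemma \ref{lem:i bounds} renders $F(s,y)x^s/s$ essentially Gaussian in $t$, and combining $G(s,y)-G(\betasad,y)=itG'(\betasad,y)+O(t^2\max_{|v|\le t_0}|G''(\betasad+iv,y)|)$ with \eqref{eq:gauss} yields $I_0\ll x\rho(u)\bigl(|G'(\betasad,y)|/\log x+\max_{|v|\le t_0}|G''(\betasad+iv,y)|/((\log x)(\log y))\bigr)$. For $I_1$, bound $|G(s,y)-G(\betasad,y)|\le|t|\max_{|v|\le t_1}|G'(\betasad+iv,y)|$; on $t_0\le|t|\le\pi/\log y$ the first-case Gaussian bound saves $\exp(-u^{1/3}/(2\pi^2))$ past $|t|=t_0$, while on $\pi/\log y\le|t|\le t_1$ the second-case bound $\hat\rho\ll\exp(I(\xi)-u/(\pi^2+\xi^2))$, combined with $\xi=O(\log\log y)$ in the allowed range of $u$, delivers saving $\exp(-u^{1/3}/20)$, giving $I_1\ll x\rho(u)\max_{|v|\le t_1}|G'(\betasad+iv,y)|\exp(-u^{1/3}/20)/\log x$. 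Dividing $I_0$ and $I_1$ by $\Lambda G\asymp x\rho(u)G$ produces the first three error terms.

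For $I_2$, rewrite $F(s,y)x^s/s=K(s-1)(\log y)\hat\rho((s-1)\log y)x^s$ using $K(s-1)=(s-1)\zeta(s)/s$, and split $(\log y)\hat\rho((s-1)\log y)=1/(it)+R(t)$; the third case of Lemma \ref{lem:i bounds} gives $|R(t)|\ll e^\xi/(|t|^2\log y)$ for $|t|\ge t_1$. The main piece $K(s-1)x^s/(it)$ integrates to $x^\betasad\int_{t_1\le|t|\le t_2}(G(\betasad+it,y)-G(\betasad,y))K(\betasad+it-1)x^{it}\,dt/t$; after dividing by $\Lambda G$, the prefactor $x^\betasad/(x\rho(u))\asymp\sqrt{u}/e^{I(\xi)}\ll\exp(-u/2)$ (via Lemma \ref{lem:rho size} and the behaviour of $I(\xi(u))$ deduced from Lemma \ref{lem:xilem}) produces the oscillatory-integral piece of the last error term. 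The remainder $K(s-1)R(t)x^s$ is bounded by $|G(\betasad+it,y)-G(\betasad,y)|\le G(\betasad,y)\max_{|t|\le t_2}|G(\betasad+it,y)/G(\betasad,y)-1|$ together with Lemma \ref{lem:convex} for $K(\betasad+it-1)$, and $\int_{t_1}^{t_2}|K|\cdot e^\xi/(t^2\log y)\,dt$ is controlled using $e^\xi/t_1=\log y$; after dividing by $\Lambda G$ this yields the $\exp(-u/2)\max_{|t|\le t_2}|G(\betasad+it,y)/G(\betasad,y)-1|$ piece. The main obstacle lies here in $I_2$: one must retain the leading oscillatory integral intact (rather than bounding it trivially via the maximum of $|G(\betasad+it,y)/G(\betasad,y)-1|$) and carefully track the $R(t)$-remainder and $\zeta$-bounds so that all resulting polylog factors are absorbed into the $\exp(-u/2)$ prefactor across the full allowed range of $u$.
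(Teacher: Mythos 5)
Your proposal is correct and follows essentially the same route as the paper: truncate via Proposition \ref{prop:small} at $T=t_2$, split the contour at $t_0$ and $t_1$, Taylor-expand on $|t|\le t_0$, use the first two cases of Lemma \ref{lem:i bounds} on $[t_0,t_1]$, and on $[t_1,t_2]$ replace $\hat\rho((s-1)\log y)$ by $1/((s-1)\log y)$ via the third case, keeping the resulting oscillatory integral intact and bounding the remainder with Lemma \ref{lem:convex} and $\max_{|t|\le t_2}|G(\betasad+it,y)/G(\betasad,y)-1|$, with the $\exp(-u/2)$ prefactor coming from $x^{\betasad}/(x\rho(u))\asymp\sqrt{u}\,e^{-I(\xi)}$ and $I(\xi)\sim u$. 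The only point left implicit is the unconditional fact $G(\betasad,y)\asymp 1$ in this range (Corollary \ref{cor:logg1size} and Lemma \ref{lem:logGderivaccurate2}), which is what lets you drop $G(\betasad,y)$ from the denominators when rearranging to the stated form of $E$.
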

\begin{proof}
	Our strategy is to establish $\Psi(x,y) = \Lambda(x,y) G( \betasad,y)  (1 + E_1 + E_2 + E_3 + E_4) + E_5$ for
	\begin{equation}\label{eq:eis}
		\begin{split}
		E_1 &\ll \frac{ |G'( \betasad,y)|}{G( \betasad,y)\log x} + \frac{ \max_{|v|\le t_0}|G''( \betasad+iv,y)|}{G( \betasad,y)(\log x)( \log y)},\\
		E_2 & \ll \frac{\max_{|v|\le t_1} |G'( \betasad+iv,y)|\exp(-u^{1/3}/20)}{G( \betasad,y)\log x},\\
		E_3 & \ll  \frac{\exp(-u/2)}{\log y} \int_{t_1 \le |t| \le t_2} \left| \frac{G( \betasad+it)-G( \betasad,y)}{G( \betasad,y)}\right| \frac{\log(|t|+2)}{t^2} dt, \\
		E_4 &\ll  \exp(-u/2) \left|\int_{t_1 \le |t| \le t_2} K( \betasad+it-1)x^{it} \frac{G( \betasad+it,y)-G( \betasad,y)}{G( \betasad,y)}\frac{dt}{t}\right|,\\
		E_5 &\ll t_2^{-4/5} (\Psi(x,y)+x\rho(u)G( \betasad,y)).
	\end{split}
	\end{equation}
	The proposition will then follow by rearranging and the fact that $G(\beta,y) \asymp 1$ in the considered range, unconditionally, as follows from Corollary \ref{cor:logg1size} and Lemma \ref{lem:logGderivaccurate2}. From Proposition \ref{prop:small} with $T=t_2$,
	\begin{equation}\label{eq:diffintmed2}
		\begin{split}
			\Psi(x,y) -&\Lambda(x,y)G( \betasad,y) \\
			&=\frac{1}{2\pi i} \int_{ \betasad-it_2}^{ \betasad+it_2} ( G(s,y)-G( \betasad,y))F(s,y) \frac{x^s}{s}ds + O( t_2^{-4/5}(\Psi(x,y)+x\rho(u) G( \betasad,y)) ),
		\end{split}
	\end{equation}
	which explains $E_5$. For $|\Im s| \le t_0$, we Taylor-expand $I(\xi-it\log y)$ as in the medium $u$ range and obtain the contribution of $E_1$ (see \eqref{eq:tay})
	We treat the contribution of $|\Im s| \in [t_0,t_1]$.
	We replace $G(s,y)-G( \betasad,y)$ with
	\[ |G(s,y)-G( \betasad,y)| \le |\Im s| \max_{0 \le |t|\le |\Im s|} |G'( \betasad+it,y)|.\]
	The first two parts of Lemma \ref{lem:i bounds} show
	\begin{align}
	&\int_{ \betasad+it, \, |t| \in [t_0,t_1]} ( G(s,y)-G( \betasad,y))F(s,y) \frac{x^s}{s}ds  \\
		&\ll  \max_{|t|\le t_1} |G'( \betasad+it,y)|  x\rho(u)(\log y)\sqrt{u}\int_{|t| \in [t_0,t_1]} |t|\left( \exp\left(-\frac{t^2 (\log x)( \log y)}{2\pi^2}\right) + \exp(-u/(\pi^2+\xi^2))\right) dt\\
		&\ll  \max_{|t|\le t_1} |G'( \betasad+it,y)| x\rho(u)\sqrt{u}\frac{\exp(-u^{1/3}/2\pi^2)}{\log x}.
	\end{align}
	This explains $E_2$. It remains to consider $t_2 \ge |\Im s| \ge t_1$. We use the third part of Lemma \ref{lem:i bounds} to replace $\hat{\rho}((s-1)\log y)$, appearing in $F(s,y)$, with its approximation:
	\begin{multline}\label{eq:lastrange}
	 \int_{ \betasad+it, \, |t| \in [t_1,t_2]} ( G(s,y)-G( \betasad,y))F(s,y) \frac{x^s}{s}ds \\
		=(\log y) x^{ \betasad} \int_{s= \betasad+it, \, |t| \in [t_1,t_2]} K(s-1)x^{it} ( G(s,y)-G( \betasad,y))\left( \frac{i}{t\log y} + O\left( \frac{u\log (u+1)}{t^2 \log^2 y}\right)\right) ds.
	\end{multline}
	Recall $x^{ \betasad} \ll x\rho(u) \sqrt{u} \exp(-I(\xi(u)))$ by Lemma \ref{lem:rho size}, and that $I(\xi(u)) \sim u$ since a change of variables shows $I(r)= \mathrm{Li}(e^r)+O(\log r)\sim e^r/r$. The contribution of the error term in the right-hand side of \eqref{eq:lastrange} is
	\begin{align}
		&\ll x^{ \betasad} \log y \int_{s= \betasad+it, \, |t| \in [t_1,t_2]} |K(s-1)x^{it} ( G(s,y)-G( \betasad,y))| \frac{u\log (u+1)}{t^2 \log^2 y} |ds| \\
		& \ll \frac{x\rho(u)\exp(-2u/3)}{\log y}  \int_{|t|\in[t_1,t_2]}| G( \betasad+it)-G( \betasad,y)| \frac{|\zeta( \betasad+it)| | \betasad+it-1|} {t^2 | \betasad+it|} dt.
	\end{align}
	If $|t|\le 2$ we use $|\zeta( \betasad+it)( \betasad+it-1)| \ll 1$ while if $|t| \ge 2$ we use Lemma \ref{lem:convex}, to obtain an error term of size $E_3$. The main term of \eqref{eq:lastrange} gives $E_4$.
\end{proof}

\subsection{Proof of Theorem \ref{thm:psierror}: medium \texorpdfstring{$u$}{u}}\label{sec:psierror}
Here we prove Theorem \ref{thm:psierror} in the range  \eqref{eq:usmall}.
We obtain from Proposition \ref{prop:large} that \textit{unconditionally}
\begin{equation}\label{eq:psiuncon}
\Psi(x,y) = \Lambda(x,y) G(\betasad,y) ( 1+E)
\end{equation}
for 
\begin{equation}
	E \ll_{\varepsilon} \frac{\max_{|v|\le 1} |G'(\betasad+iv,y)|}{G(\betasad,y)\log x} + \frac{ \max_{|v|\le 1}|G''(\betasad+iv,y)|}{G(\betasad,y)(\log x )(\log y)}+ \frac{1}{y}.
\end{equation}
Because we assume $y \ge (\log x)^{2+\varepsilon}$, we have $\betasad\ge 1/2+ c_{\varepsilon}$. Under RH, $\log G(\betasad,y) = O_{\varepsilon}(1)$ by Lemma \ref{lem:sizeg}. To bound the quantities appearing in $E$, we write $G(\betasad+it,y)$ as $G_1(\betasad+it,y)$ times $G_2(\betasad+it,y)$. Lemma \ref{lem:logGderivaccurate2} and equation \eqref{eq:pos} tell us that
\begin{equation}\label{eq:logg2i}
	(\log G_2)^{(i)}(\betasad+it,y) \ll_{\varepsilon} (\log y)^{i-1} y^{\frac{1}{2}-\betasad} 
\end{equation}
for $i=0,1,2$ and $t \in \RR$.  Corollary \ref{cor:logg1size} says that under RH
\begin{align}\label{eq:logg1form}
	(\log G_1)^{(i)}(\betasad+it,y) &= (-1)^i(\log y)^{i-1}y^{-\betasad-it} ( \psi(y)-y+ O_{\varepsilon}(y^{\frac{1}{2}})) \ll_{\varepsilon} (\log y)^{i+1} y^{\frac{1}{2}-\beta}
\end{align}
for all $i=0,1,2$ and $|t|\le 1$. Putting these two together, one obtains \eqref{eq:firstpart}.

\subsection{Proof of Theorem \ref{thm:psierror}: small \texorpdfstring{$u$}{u}}
Here we prove Theorem \ref{thm:psierror} for $u$ in the range \eqref{eq:smallurange}. In this range, $\betasad=1+o(1)$ and $\Psi(x,y)=x^{1+o(1)}$. Moreover, $\log G(\betasad,y) = O(1)$ unconditionally by Corollary \ref{cor:logg1size} and Lemma \ref{lem:logGderivaccurate2}. The hardest range of the proof will be $u\asymp 1$. Before proceeding with the actual proof, note that from Proposition \ref{prop:smallu} and the triangle inequality, it follows that
\begin{equation}\label{eq:smallutriangle} \Psi(x,y) = \Lambda(x,y) G(\betasad,y) \left(1+O\left(t_2^{-4/5} + t_2 \max_{|t|\le t_2}|G'(\betasad+it,y)|+\max_{|t|\le 1}|G''(\betasad+it,y)|\right)\right)
\end{equation}
holds unconditionally for $t_2 \in [(\log x)^5, y^{4/5}]$ and the range $x \ge y \ge C$, $u \le (\log y)(\log \log y)^3$.

We obtain from Proposition \ref{prop:smallu} with $t_2=y^{4/5}$ that
\[ \Psi(x,y) = \Lambda(x,y) G(\betasad,y) ( 1+E_1 + E_2 + E_3 + E_4 + y^{-3/5})\]
for $E_i$ bounded in \eqref{eq:eis}.
We write $G(\betasad+it,y)$ as $G_1(\betasad+it,y)$ times $G_2(\betasad+it,y)$. 
By Lemma \ref{lem:logGderivaccurate2} and \eqref{eq:pos},
\begin{equation}\label{eq:logg2i2}
	(\log G_2)^{(i)}(\betasad+it,y) \ll (\log y)^{i-1}u\log (u+1) y^{-\frac{1}{2}}
\end{equation}
for $i=0,1,2$ and $t \in \RR$ where we simplified $y^{-\betasad}$ using \eqref{eq:ybetasize}. From now on we assume RH. Corollary \ref{cor:logg1size} implies
\begin{align}\label{eq:logg1form2}
	(\log G_1)^{(i)}(\betasad+it,y) &\ll \frac{ (\log y)^{i-1}u \log (u+1)}{y}(|\psi(y)-y|+y^{\frac{1}{2}})
\end{align}
for $i=0,1,2$ when $|t|\le 1$. 
As in the medium $u$ case, one can bound $E_1$ by an acceptable quantity using our estimates for $(\log G_1)^{(i)}$ and $(\log G_2)^{(i)}$.
Recall
\[E_2  \ll \frac{\max_{|v|\le t_1} |G'( \betasad+iv,y)|\exp(-u^{1/3}/20)}{G( \betasad,y)\log x}\]
where $t_1 = u\log (u+1)/\log y$. If $t_1 \le 1$ we bound $E_2$ in the same way we bounded $E_1$. Otherwise we use \eqref{eq:gen2}, which implies that
\begin{align}\label{eq:logg1form22}
	(\log G_1)^{(i)}(\betasad+it,y) &\ll  (\log y)^{i+1}u \log (u+1) y^{-\frac{1}{2}}
\end{align}
holds for $i=0,1,2$ and $|t|\le  y^{9/10}$. This shows that, if $t_1>1$, i.e.~$u\log (u+1)\ge \log y$,
\[ E_2 \ll \frac{(\log y)^2 u \log(u+1)\exp(-u^{1/3}/20)}{y^{\frac{1}{2}}\log x}  \ll \log (u+1) y^{-\frac{1}{2}}.\]
This is an acceptable contribution when $u\log (u+1)> \log y$.
We now study $E_3$ and $E_4$. Due to $G(\beta+it,y)/G(\beta,y)$ being very close to $1$ in our considered range by \eqref{eq:logg2i2} and \eqref{eq:logg1form22}, we may replace \[G(\betasad+it,y)/G(\betasad,y)-1\] by \[\log G(\betasad+it,y)-\log G(\betasad,y)\]
an incur a negligible error, in both $E_3$ and $E_4$. So to show $E_3$ is acceptable we need to prove
\begin{equation}\int_{t_1 \le |t| \le y^{4/5}} | \log G( \betasad+it,y)-\log G( \betasad,y)| \frac{\log(|t|+2)}{t^2} dt \ll \frac{e^{u/3}}{y} (|\psi(y)-y|+y^{\frac{1}{2}}).
\end{equation}
This is shown using the bound
\begin{equation}\label{eq:pt} \log G( \betasad+it,y) \ll \frac{u \log (u+1)}{y \log y} (|\psi(y)-y| + y^{\frac{1}{2}}\log^2(|t|+2)), \qquad |t| \le y^{9/10},
\end{equation}
which is a consequence of \eqref{eq:gen2} and \eqref{eq:logg2i2}. To handle $E_4$ it remains to prove
\begin{equation}\label{eq:need}
\int_{t_1 \le |t| \le y^{4/5}} K(\beta+it-1)x^{it}( \log G( \betasad+it,y)-\log G( \betasad,y)) \frac{dt}{t} \ll_{\varepsilon} \frac{e^{u/2}}{y \log y} (|\psi(y)-y|+y^{\frac{1}{2}}).
\end{equation}
Here we cannot use the triangle inequality and put absolute value inside the integral. Indeed, if we use the pointwise bound \eqref{eq:pt}, along with our bounds for $\zeta$ (Lemmas \ref{lem:convex} and \ref{lem:second}), we get a bound which falls short by a factor of $(\log y)^3$. We shall overcome this by several integrations by parts as we now describe. 

To deal with the contribution of $\log G(\beta,y)$ to \eqref{eq:need} we use \eqref{eq:pt} with $t=0$ along with the bound
\[ \int_{t_1 \le |t| \le y^{4/5}} K(\beta+it-1)x^{it}\frac{dt}{t} \ll u^2\]
which follows by integration by parts, where we replace $x^{it}$ by its antiderivative $x^{it}/\log x$. 

Note that due to integration by parts, derivatives of $\zeta$ arise. This means that in addition to Lemmas \ref{lem:convex} and \ref{lem:second} we need the bounds $\zeta^{(k)}(s) \ll_k (1+(|t|+4)^{1-\sigma})\log^{k+1}(|t|+4)$ and $\int_{1}^{T} |\zeta^{(k)}(\sigma+it)|^2 dt \ll_k T$ for $\sigma \in [2/3,1]$ and $T, |t| \ge 1$. These bounds follow from Lemmas \ref{lem:convex} and \ref{lem:second} through Cauchy's integral formula. 

To deal with the contribution of $\log G(\beta+it,y)$ to \eqref{eq:need} we write it $\log G_1( \betasad+it,y)+\log G_2( \betasad+it,y)$ and obtain two integrals which we bound separately. 
\subsubsection{Treatment of \texorpdfstring{$\log G_1$}{logG1}}
Recall we assume $y \le x^{1-\varepsilon}$. We want to show
\begin{equation}\label{eq:need2}
	\int_{t_1 \le |t| \le y^{4/5}} K(\beta+it-1)x^{it} \log G_1( \betasad+it,y) \frac{dt}{t} \ll_{\varepsilon} \frac{e^{u/2}}{y \log y} (|\psi(y)-y|+y^{\frac{1}{2}}).
\end{equation}
We integrate by parts, replacing $x^{it}$ by its antiderivative, reducing matters to showing
\begin{equation}\label{eq:need3}
 \frac{1}{\log x}\int_{t_1\le |t| \le y^{4/5}}
K(\beta+it-1) x^{it}  \frac{G_1'}{G_1}(\beta+it,y) \frac{dt}{t}  \ll_{\varepsilon} \frac{e^{u/2}}{y \log y} (|\psi(y)-y|+y^{\frac{1}{2}}).
\end{equation}
We divide and multiply the integrand by $y^{it}$, so the left-hand side of \eqref{eq:need2} is now
\begin{equation}\label{eq:need4}
\frac{1}{\log x}\int_{t_1\le |t| \le y^{4/5}}
	K(\beta+it-1) (x/y)^{it} H(t)\frac{dt}{t}
\end{equation}
where $H(t):= y^{it}(G'_1/G_1)(\beta+it,y)$. From Lemma \ref{lem:logg1formula},
\[ y^{\beta} \cdot H(t) =  \sum_{|\Im (\rho)-t| \le 2y^{4/5}}\frac{y^{\rho}}{\rho-\beta-it}+O(y^{\frac{2}{5}}) \ll |\psi(y)-y|+ y^{\frac{1}{2}}\log^2 (|t|+2)\]
and, for $k=1,2,3$,
\[  y^{\beta} \cdot H^{(k)}(t) = (k+1)!i^{k}   \sum_{|\Im (\rho)-t| \le 2y^{4/5}}\frac{y^{\rho}}{(\rho-\beta-it)^{k+1}}+O(y^{\frac{2}{5}})\ll  y^{\frac{1}{2}}\log (|t|+2). \]
We integrate by parts 3 times, replacing $(x/y)^{it}$ by its antiderivative. We are guaranteed to get enough saving since $\log(x/y) \gg_{\varepsilon} \log x$. 
\subsubsection{Treatment of \texorpdfstring{$\log G_2$}{logG2}}
The function $\log G_2(\beta+it,y)$ is given as a sum over proper primes powers. As the cubes and higher powers contribute at most $\ll   y^{-2/3+o(1)}$ to it by the prime number theorem (see \cite{gorodetsky2022smooth}), we can replace $\log G_2(\beta+it,y)$  with the prime sum $\sum_{y^{1/2}<p \le y} p^{-2(\beta+it)}/2$, so we are left to show 
\[\sum_{y^{1/2}<p \le y} p^{-2\beta}\int_{t_1 \le |t| \le y^{4/5}} K(\beta+it-1)(x/p^2)^{it} \frac{dt}{t} \ll \frac{e^{u/2}}{y^{\frac{1}{2}} \log y}.\]
For a given $p$, the pointwise bound $(x/p^2)^{it} \ll 1$ leads to the above integral being bounded by $\ll \log y$. This is good enough for the primes $p \in [y^{1/2}\log y, y]$, since 
\[\sum_{y^{\frac{1}{2}}\log y \le p\le y}p^{-2\beta} \log y \asymp \frac{u\log (u+1)}{y^{\frac{1}{2}}\log y}.\]
For the primes $p \in (y^{1/2},y^{1/2}\log y)$ we integrate by parts, replacing $(x/p^2)^{it}$ by its antiderivatives.
\subsection{Proof of Theorem \ref{thm:psierror2}}
Suppose $(\log x)^{3} \ge y \ge (\log x)^{4/3+\varepsilon}$. It follows from Proposition \ref{prop:large} that
 $\Psi(x,y) = \Lambda(x,y)  G(\betasad,y) ( 1 + E)$ holds unconditionally for
\begin{equation}\label{eq:Eupper}
	E \ll_{\varepsilon} \frac{ |G'(\betasad,y)|}{G(\betasad,y)\log x}   + \frac{ \max_{|v|\le t_0}|G''(\betasad+iv,y)|}{G(\betasad,y)(\log x)(\log y)} + \frac{\max_{|v|\le \frac{1}{\log y}} |G'(\betasad+iv,y)|}{G(\betasad,y)\exp(u^{1/3}/20)}  +\frac{1}{y}
\end{equation}
where $t_0$ is given in the proposition. It remains to bound the quantities appearing in $E$. From now on we assume RH. Let $A:= (\log x) / y^{1/2}$. We will prove the  stronger bound
\begin{align} 
	E \ll_{\varepsilon} \frac{|\psi(y)-y|+y^{\frac{1}{2}}}{y} \bigg( 1+ u \frac{|\psi(y)-y|+y^{\frac{1}{2}}}{y}\bigg)+ \frac{\max\{A,A^2\}}{u\max\{1,|\log A|\}} \bigg( 1+\frac{\max\{A,A^2\}}{\max\{1,|\log A|\}}\bigg),
\end{align}
which implies the theorem using $\psi(y)-y \ll y^{1/2} \log^2 y$. Recall we can always simplify $y^{-\betasad}$ using \eqref{eq:ybetasize} as $\asymp_{\varepsilon} (\log x)/y$. In particular, $y^{1/2-\beta} \asymp_{\varepsilon} A$.
Recall $G=G_1 G_2$. Lemma \ref{lem:logGderivaccurate2} and equation \eqref{eq:pos} tell us that
\begin{equation}\label{eq:logg2ia}
	(\log G_2)^{(i)}(\betasad+it,y) \ll (\log y)^{i} \frac{\max\{A, A^2\}}{\max\{1,|\log A|\}}
\end{equation}
for $i=0,1,2$ and $t \in \RR$. Corollary \ref{cor:logg1size} says that under RH
\begin{align}\label{eq:logg1forma}
	(\log G_1)^{(i)}(\betasad+it,y) & \ll (\log y)^{i-1}\frac{\log x}{y}   ( |\psi(y)-y| + y^{\frac{1}{2}})
\end{align}
for $i=0,1,2$ and $|t|\le 1$. 
Applying \eqref{eq:logg2ia} and \eqref{eq:logg1forma} with $i=1$ shows
\[ \frac{ |G'(\betasad,y)|}{G(\betasad,y)} \frac{1}{\log x} \ll \frac{|\psi(y)-y|+y^{\frac{1}{2}}}{y}+  \frac{\max\{A,A^2\}}{u\max\{1,|\log A|\}}\]
which treats the first quantity in \eqref{eq:Eupper}. We now consider the third term in \eqref{eq:Eupper}. Observe
\begin{equation} \label{eq:firstobs}
	\frac{ \max_{|v|\le 1/\log y}|G'(\betasad+iv,y)|}{G(\betasad,y)\exp(u^{1/3}/20)} \le \frac{ \max_{|v|\le 1/\log y}|G(\betasad+iv,y)|}{G(\betasad,y)\exp(u^{1/3}/20)} \cdot \max_{|v|\le 1 }|(\log G)'(\betasad+iv,y)|.
\end{equation}
From \eqref{eq:logg2ia} and \eqref{eq:logg1forma} we have 
\begin{equation}
\max_{|v|\le 1 }|(\log G)'(\betasad+iv,y)| \ll  (\log x)^4,
\end{equation}
say, and, by \eqref{eq:pos} and \eqref{eq:logg1forma},
\begin{equation}
 \frac{ \max_{|v|\le 1/\log y}|G(\betasad+iv,y)|}{G(\betasad,y)}\le \exp(C_{\varepsilon} (\log y)^2 (\log x) /y^{1/2}),
\end{equation}
so that \eqref{eq:firstobs} leads to
\[\frac{\max_{|v|\le 1/\log y} |G'(\betasad+iv,y)|}{G(\betasad,y)\exp(u^{1/3}/20)} \ll_{\varepsilon} \frac{\exp(C_{\varepsilon} (\log y)^2 (\log x) /y^{1/2})}{ \exp(u^{1/3}/40)} \ll_{\varepsilon} \frac{1}{y}.\]
It remains to bound the second term in \eqref{eq:Eupper}. Observe
\begin{multline}\label{eq:secondterm22}
\frac{ \max_{|v|\le t_0}|G''(\betasad+iv,y)|}{G(\betasad,y)(\log x) (\log y)} \le \frac{ \max_{|v|\le t_0}|G(\betasad+iv,y)|}{G(\betasad,y)(\log x) (\log y)} \\
	\cdot ( \max_{|v|\le 1}|(\log G)''(\betasad+iv,y)| + \max_{|v|\le 1}|(\log G)'(\betasad+iv,y)|^2).
\end{multline}
By \eqref{eq:pos} we can bound the fraction in the right-hand side of \eqref{eq:secondterm22} by $O_{\varepsilon}(1)$:
\begin{multline}
\frac{ \max_{|v|\le t_0}|G(\betasad+iv,y)|}{G(\betasad,y)}\le \frac{ \max_{|v|\le t_0}|G_1(\betasad+iv,y)|}{G_1(\betasad,y)}\\
\le \exp\big( \int_{-t_0}^{t_0} |G_1'/G_1|(\beta+iv,y) dv\big)\le \exp(C_{\varepsilon}t_0 (\log y)^2(\log x)/y^{1/2}) \ll_{\varepsilon} 1.
\end{multline}
The derivatives of $\log G$ in the right-hand side of \eqref{eq:secondterm22} are handled by \eqref{eq:logg2ia} and \eqref{eq:logg1forma}, giving
\begin{multline}\label{eq:secondterm}
 \max_{|v|\le 1}|(\log G)''(\betasad+iv,y)| + \max_{|v|\le 1}|(\log G)'(\betasad+iv,y)|^2 \\
 \ll  \frac{(\log y )(\log x)}{y}(|\psi(y)-y|+y^{\frac{1}{2}}|) + \frac{(\log x)^2}{y^2} (|\psi(y)-y|+y^{\frac{1}{2}})^2\\
 +(\log y)^{2} \left( \frac{\max\{A, A^2\}}{\max\{1,|\log A|\}} + \frac{\max\{A, A^2\}^2}{\max\{1,|\log A|\}^2}\right).
\end{multline}
Dividing this by $(\log x) (\log y)$ gives a bound for the second term in \eqref{eq:Eupper}.
\appendix
\section{Review of \texorpdfstring{$\Lambda(x,y)$}{Lambda(x,y)}}
\subsection{\texorpdfstring{$\lambda_y$}{lambda y} and its Laplace transform}
Saias \cite[Lem.~4(iii)]{Saias1989} proved that $\lambda_y(v) \ll \rho(v)v^3 + e^{2v}y^{-v}$ holds for $y \ge 2$, $v \ge 1$. 
The following is a weaker version of his result which suffices for us.
\begin{lem}[Saias]\label{lem:lambdaupper}
	If $u \ge \max\{C,y+1\}$ we have $\lambda_y(u) \ll  (C/y)^{u}$. 
\end{lem}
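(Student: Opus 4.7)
The plan is to derive an explicit elementary representation of $\lambda_y(u)$ via Stieltjes integration by parts and then bound term by term. Since $\lambda_y(u)=\lambda_y(u^+)$ whenever $y^u\in\ZZ$, I may assume $y^u\notin\ZZ$. The function $v\mapsto\rho(u-v)$ is continuous except at $v=u$, where it has a jump of $-1$ (from $\rho(0)=1$ down to $0$ as $v$ crosses $u$). Applying integration by parts to the defining integral $\lambda_y(u)=\int_{\RR}\rho(u-v)\,d(\lfloor y^v\rfloor/y^v)$, with the boundary contributions at $\pm\infty$ vanishing, will yield
\[
\lambda_y(u)=\frac{\lfloor y^u\rfloor}{y^u}+\int_0^{u-1}\rho'(u-v)\frac{\lfloor y^v\rfloor}{y^v}\,dv,
\]
where the first term comes from the jump at $v=u$ and the integral is truncated at $u-1$ because $\rho'(t)=0$ for $t\in(0,1)$. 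Writing $\lfloor y^v\rfloor/y^v=1-\{y^v\}/y^v$ and using $\int_0^{u-1}\rho'(u-v)\,dv=\rho(u)-1$, the $1$'s cancel, leaving the compact identity
\[
\lambda_y(u)=\rho(u)-\frac{\{y^u\}}{y^u}-\int_0^{u-1}\rho'(u-v)\frac{\{y^v\}}{y^v}\,dv.
\]

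Next I would bound each of the three terms by $O((C/y)^u)$. For $\rho(u)$, Stirling-type asymptotics give $\rho(u)\ll(e/u)^u$ for $u\ge C$, and since $u\ge y$, this is $\le(e/y)^u$. The second term trivially satisfies $\{y^u\}/y^u\le y^{-u}\le(C/y)^u$. For the integral, substituting $t=u-v$ and using the delay equation $|\rho'(t)|=\rho(t-1)/t$ give
\[
\left|\int_0^{u-1}\rho'(u-v)\frac{\{y^v\}}{y^v}\,dv\right|\le y^{1-u}\int_0^{\infty}\rho(s)y^s\,ds=y^{1-u}\,\hat\rho(-\log y)=y^{1-u}\exp(\gamma+I(\log y)).
\]
A crude estimate $I(\log y)\le y$ (from $(e^w-1)/w\le e$ on $[0,1]$ and $(e^w-1)/w\le e^w$ on $[1,\log y]$) bounds this by $y^{1-u}e^{\gamma+y}$, which is $\ll(C/y)^u$ whenever $u\ge y+1$, for $C$ sufficiently large (e.g. $C=e^2$).

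The only delicate step is the Stieltjes integration by parts, where the jump of $v\mapsto\rho(u-v)$ at $v=u$ must be tracked to produce the boundary contribution $\lfloor y^u\rfloor/y^u$; once this identity is in place the remaining estimates are entirely elementary and the hypothesis $u\ge y+1$ is precisely what is needed to absorb the factor $e^y$ from $\hat\rho(-\log y)$ into $(C/y)^u$.
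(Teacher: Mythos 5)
Your proof is correct, and its first step is identical to the paper's: your integration-by-parts identity is exactly \eqref{eq:intbyparts}, with the $O(y^{-u})$ term written out as $-\{y^u\}y^{-u}$. The difference is in how the remaining integral is bounded. The paper divides through by $\rho(u)$, applies Hildebrand's ratio estimate $\rho(u-v)\ll\rho(u)e^{v\xi(u)}$, and uses the hypothesis $u\ge y+1$ in the form $e^{\xi(u-1)}\ge y$; this yields the intermediate bound $\lambda_y(u)\ll\rho(u)u^2\log(u+1)$, which is then converted into $(C/y)^u$ via Lemma \ref{lem:rho size} and $I(\xi(u))\ll u$. You instead estimate the integral directly by $y^{1-u}\int_0^{\infty}\rho(s)y^s\,ds=y^{1-u}e^{\gamma+I(\log y)}$ using the Laplace transform formula \eqref{eq:hat rho}, bound $I(\log y)\le y$ elementarily (even simpler: $e^t-1\le te^t$ gives $I(w)\le e^w-1$), and let $u\ge y+1$ absorb the factor $e^{y}$ into $C^{u}$, while the main term is handled by the crude bound $\rho(u)\ll(e/u)^u\le(e/y)^u$, available e.g.\ from $\rho(u)\le 1/\Gamma(u+1)$. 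Your route is more elementary and self-contained: it avoids the $\xi$-asymptotics and the $\rho$-ratio estimate entirely and produces explicit constants such as $C=e^2$, whereas the paper's argument stays within the saddle-point machinery used throughout and records the slightly more informative relative bound $\lambda_y(u)\ll\rho(u)u^2\log(u+1)$, a weak form of Saias' estimate quoted just before the lemma. Your reduction to $y^u\notin\ZZ$ via right-continuity and your tracking of the jump of $v\mapsto\rho(u-v)$ at $v=u$ are both handled correctly, so there is no gap.
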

\begin{proof}
	The condition $u \ge \max\{C,y+1\}$ ensures $e^{\xi(u-1)} \ge y$:
	\[  e^{\xi(u-1)} \ge (u-1) \xi(u-1) \ge y\xi(u-1) \ge y.\]
	Integrating the definition of $\lambda_y$ by parts gives
	\begin{equation}\label{eq:intbyparts}
		\lambda_y(u)=\rho(u)+\int_{0}^{u-1}(-\rho'(u-v))\{y^v\}y^{-v}dv + O(y^{-u}).
	\end{equation}
	By \eqref{eq:intbyparts} and the definition of $\rho$ we have
	\begin{equation}\label{eq:weaksaias} \frac{\lambda_y(u)}{\rho(u)} = 1 - \int_{0}^{u-1} \frac{\rho'(u-v)}{\rho(u)} \frac{\{y^v\}}{y^v} dv+ O( y^{-u})=  \int_{0}^{u-1} \frac{\rho(u-v-1)}{(u-v)\rho(u)} \frac{\{y^v\}}{y^v} dv+ O(1).
	\end{equation}
One has $\rho(u-v)\ll \rho(u)e^{v \xi(u)}$ uniformly for $0 \le v \le u$ \cite[Cor.~2.4]{Hildebrand1993}. Hence the integral on the right-hand side of \eqref{eq:weaksaias} is
	\[ \ll  \frac{\rho(u-1)}{\rho(u)} \int_{0}^{u-1} \left(\frac{e^{\xi(u-1)}}{y}\right)^v dv \le \frac{\rho(u-1)}{\rho(u)} (u-1) \ll u e^{\xi(u)}\]
	which is $\ll u^2 \log (u+1)$ by Lemma \ref{lem:xilem}. Hence
	\[ \lambda_y(u) \ll \rho(u) u^2\log(u+1) \ll u^{3/2} \log(u+1) \exp(I(\xi(u)) e^{-u\xi(u)} \le u^{3/2} \log(u+1) \exp(I(\xi(u))y^{-u}\]
	using Lemma \ref{lem:rho size}. We have $I(\xi(u)) \ll u$. As $u^{3/2}\log(u+1)$ may be absorbed in $C^u$, we are done.
\end{proof}
By Lemma \ref{lem:lambdaupper}, the contribution of $v \ge \max\{C,y+1\}$ to \eqref{eq:laplacelambday} is
\[\int_{\max\{C,y+1\}}^{\infty}| e^{-sv}\lambda_y(v)| dv \ll \int_{\max\{C,y+1\}}^{\infty} ( e^{-\Re s}C/y)^v dv<\infty.\]
This establishes
\begin{cor}\label{cor:lambdaconv}
	Fix $\varepsilon>0$. If $y \ge C_{\varepsilon}$ then $\hat{\lambda}_y$ converges absolutely for $\Re s > -(\log y)/(1+\varepsilon)$.
\end{cor}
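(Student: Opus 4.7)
The proof plan is to split the defining integral
\[ \hat{\lambda}_y(s) = \int_0^{\infty} e^{-sv}\lambda_y(v)\,dv \]
at the threshold $M := \max\{C, y+1\}$ that appears in Lemma \ref{lem:lambdaupper}. For $v \in [0,M]$ the function $\lambda_y(v)$ is bounded (for instance, it is manifestly bounded on $[0,1]$ where it equals $1-\{y^v\}/y^v$, and on $[1,M]$ it is continuous except at finitely many jumps of size $O(1/x)$), so this piece contributes a finite amount whatever the value of $\Re s$ is. The meat of the argument therefore lies in the tail $v > M$, and here Lemma \ref{lem:lambdaupper} has done the hard work already: it gives $\lambda_y(v) \ll (C/y)^v$, whence
\[ \int_M^{\infty} |e^{-sv}\lambda_y(v)|\,dv \ll \int_M^{\infty} \bigl(e^{-\Re s}\, C/y\bigr)^v\, dv, \]
which is finite provided $e^{-\Re s}C/y < 1$, i.e.\ $\Re s > \log C - \log y$.

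It then remains to verify that the hypothesis $\Re s > -(\log y)/(1+\varepsilon)$ forces this last inequality once $y$ is large enough in terms of $\varepsilon$. Rearranging, one needs $-(\log y)/(1+\varepsilon) \ge \log C - \log y$, which is equivalent to $\log y \cdot \varepsilon/(1+\varepsilon) \ge \log C$. Setting $C_\varepsilon := C^{(1+\varepsilon)/\varepsilon}$, the condition $y \ge C_\varepsilon$ exactly secures this, and absolute convergence follows.

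The only real obstacle, namely obtaining geometric decay of $\lambda_y(v)$ in the ratio $1/y$ for large $v$, is handled upstream by Lemma \ref{lem:lambdaupper}; the corollary itself is then just a bookkeeping check that the range $\Re s > -(\log y)/(1+\varepsilon)$ sits inside the half-plane of absolute convergence, which costs an $\varepsilon$-dependent lower bound on $y$.
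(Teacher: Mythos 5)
Your proof is correct and follows essentially the same route as the paper: the paper likewise invokes Lemma \ref{lem:lambdaupper} to bound the tail $\int_{\max\{C,y+1\}}^{\infty}|e^{-sv}\lambda_y(v)|\,dv \ll \int (e^{-\Re s}C/y)^v\,dv$, with the finite range contributing trivially and the condition $y \ge C_{\varepsilon}$ ensuring $e^{-\Re s}C/y<1$. Your write-up just makes explicit the bookkeeping that the paper leaves implicit.
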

\subsection{Asymptotics of \texorpdfstring{$\Lambda$}{Lambda}}\label{sec:lambdaasymp}
We define $r \colon [1,\infty) \to \RR$ by $r(t):=-\rho'(t)/\rho(t)=\rho(t-1)/(t \rho(t))$.
\begin{lem}\cite[Eq.~(6.3)]{Fouvry1996}\label{lem:rhoshifted2}
	For $0 \le v \le u-1$ and $u \ge 1$ we have
	\[ \rho'(u-v)-\rho'(u)e^{vr(u)} \ll \frac{\rho(u)ve^{vr(u)}}{u}( 1+v\log(u+1)).\]
\end{lem}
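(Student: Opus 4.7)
The plan is to apply the mean value theorem to the auxiliary function $f(w) := \rho'(u-w)\,e^{-wr(u)}$, which satisfies $f(0) = \rho'(u)$ and
\[ f'(w) = -e^{-wr(u)}\bigl[\rho''(u-w) + r(u)\rho'(u-w)\bigr]. \]
This yields $\tilde w \in [0,v]$ such that
\[ \rho'(u-v) - \rho'(u)\,e^{vr(u)} = -v\,e^{(v-\tilde w)r(u)}\bigl[\rho''(u-\tilde w) + r(u)\rho'(u-\tilde w)\bigr]. \]
Using $\rho'(t) = -\rho(t)r(t)$ (hence $\rho''(t) = \rho(t)(r(t)^2 - r'(t))$), the bracket rewrites as $\rho(u-\tilde w)\bigl[r(u-\tilde w)(r(u-\tilde w) - r(u)) - r'(u-\tilde w)\bigr]$.

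Next I would invoke the estimates $r(s) \asymp \log(s+1)$ and the uniform bound $r'(s) \ll 1/s$ for $s \ge 1$; integration then gives $|r(u-\tilde w) - r(u)| \ll \tilde w/(u-\tilde w)$. In the main range $v \le u/2$, so that $u - \tilde w \asymp u$, this produces
\[ \bigl|\rho''(u-\tilde w) + r(u)\rho'(u-\tilde w)\bigr| \ll \rho(u-\tilde w)\,\frac{1+\tilde w\log(u+1)}{u}. \]
Combining with Hildebrand's estimate $\rho(u-\tilde w) \ll \rho(u)\,e^{\tilde w \xi(u)} \le \rho(u)\,e^{\tilde w r(u)}$ (which uses $\xi(u) \le r(u)$), and noting $\tilde w \le v$, we conclude
\[ \bigl|\rho'(u-v) - \rho'(u)\,e^{vr(u)}\bigr| \ll \rho(u)\,v\,e^{vr(u)}\,\frac{1 + v\log(u+1)}{u}, \]
which is the claimed bound in this regime.

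For the complementary range $v \in (u/2, u-1]$ with $u \ge 4$, we have $v \ge \sqrt{u}$, so the right-hand side of the lemma is $\gg \rho(u)\,e^{vr(u)}\,v^2\log(u+1)/u \gg \rho(u)\,r(u)\,e^{vr(u)}$. This dominates the trivial bound $|\rho'(u-v)| + |\rho'(u)|\,e^{vr(u)} \ll \rho(u)\,r(u)\,e^{vr(u)}$, where the first summand is controlled by $\rho(u-v) \ll \rho(u)\,e^{vr(u)}$ together with $r(u-v) \le r(u)$. The compact range $u \in [1,4]$ is handled by direct verification using the explicit piecewise formulas for $\rho$ on $[1,2]$, $[2,3]$, $[3,4]$.

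The principal obstacle is the uniform estimate $r'(s) \ll 1/s$. Manipulating the identity $r'(s) = r(s)\bigl[r(s) - r(s-1) - 1/s\bigr]$ naively with $r(s)-r(s-1) \ll 1/s$ only yields $r'(s) \ll (\log s)/s$, one logarithmic factor too weak and fatal to the argument. The necessary improvement rests on the near-cancellation $r(s) - r(s-1) = 1/s + O(1/(s\log s))$, which one extracts from the refined saddle-point asymptotic $\rho(t) = (1+O(1/t))\sqrt{\xi'(t)/(2\pi)}\,\exp(\gamma - t\xi(t) + I(\xi(t)))$ together with the defining relation $e^{\xi(t)} = 1+t\xi(t)$ and a second-order Taylor expansion of $\xi$ on $[t-1,t]$.
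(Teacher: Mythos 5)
The paper offers no proof of this lemma at all --- it is quoted directly from Fouvry--Tenenbaum \cite[Eq.~(6.3)]{Fouvry1996} --- so your attempt has to stand on its own, and it does not: there is a genuine gap exactly at the point you yourself flag as the crux. Your reduction (mean value theorem for $f(w)=\rho'(u-w)e^{-wr(u)}$, the identity for the bracket, the split at $v\le u/2$, the trivial treatment of large $v$, the compact range) is sound in outline, and the minor issues there are fixable (the MVT needs the integral form of the remainder because $\rho''$ jumps at the point $2$; the step $e^{\tilde w\xi(u)}\le e^{\tilde w r(u)}$ is safer replaced by $e^{\tilde w\xi(u)}\asymp e^{\tilde w r(u)}$ via $r=\xi+O(1/u)$ and $\tilde w\le u$; $r(u-v)\le r(u)$ should be replaced by $r(u-v)\ll\log(u+1)\asymp r(u)$ unless you prove monotonicity of $r$). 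But everything hinges on the uniform bound $r'(s)\ll 1/s$, and your proposed derivation of it does not work. Via $r'(s)=r(s)\bigl[r(s)-r(s-1)-1/s\bigr]$ and $r(s)\asymp\log(s+1)$, the bound $r'(s)\ll 1/s$ is \emph{equivalent} to the near-cancellation $r(s)-r(s-1)=1/s+O\bigl(1/(s\log s)\bigr)$. The saddle-point formula $\rho(t)=(1+O(1/t))\sqrt{\xi'(t)/(2\pi)}\exp(\gamma-t\xi(t)+I(\xi(t)))$ cannot deliver this: the unspecified relative error $O(1/t)$ translates into an absolute error $O(\log t/t)$ in $r(t)=\rho(t-1)/(t\rho(t))$, and the errors at $t$ and $t-1$ are a priori unrelated, so differencing gives $r(t)-r(t-1)-1/t=O(\log t/t)$ at best --- short of the required $O(1/(t\log t))$ by a factor $\log^2 t$. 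No amount of Taylor expansion of $\xi$ repairs this, because the loss sits in the error term of the asymptotic, not in its main term.

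To close the gap you need second-order information that the first-order asymptotic for $\rho$ alone does not contain: e.g.\ an asymptotic for $\rho'$ (or for the ratio $\rho(t-1)/\rho(t)$, or for $\rho''/\rho$) with relative error $O(1/t)$ \emph{and} with the second-order terms identified, so that the cancellation against $r(t)^2$, respectively against $1/s$, can be exhibited. Establishing such two-term expansions for the logarithmic derivative of $\rho$ is precisely the nontrivial content behind the cited inequality of Fouvry--Tenenbaum (compare the estimates for derivatives of $\rho$ in the friable literature, e.g.\ La Bret\`eche--Tenenbaum), so as written your argument assumes, in disguised form, essentially the statement it is meant to prove. Note also that the weaker, easily available bound $r'(s)\ll\log(s+1)/s$ genuinely does not suffice: for $v\ll 1/\log(u+1)$ it produces an extra factor $\log(u+1)$ over the right-hand side of the lemma, so you correctly identified that the sharp $r'(s)\ll 1/s$ is indispensable --- it just is not proved by the route you sketch.
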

\begin{lem}\cite[Lem.~3.7]{LaBretecheTenenbaum}\label{lem:rxirel}
	For $u \ge 1$ we have $r(u) = \xi(u)+O(1/u)$.
\end{lem}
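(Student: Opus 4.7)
The plan is to write $r(u)=\rho(u-1)/(u\rho(u))$ and estimate the ratio $\rho(u-1)/\rho(u)$ via a refined version of the Hildebrand--Tenenbaum asymptotic expansion
\[ \rho(u) = \sqrt{\xi'(u)/(2\pi)}\, \exp(\gamma - u\xi(u) + I(\xi(u)))\, (1 + c/u + O(1/u^2))\]
for an explicit constant $c$, which sharpens Lemma \ref{lem:rho size}. Forming the ratio then benefits from the cancellation of the $c/u$ terms to order $1/u^2$, so that the $c/u$ factors do not contribute to the precision we need.

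For the exponential factor I would use that $\phi(u):=-u\xi(u)+I(\xi(u))$ satisfies $\phi'(u)=-\xi(u)$, a consequence of $I'(s)=(e^s-1)/s$ combined with the defining relation $e^{\xi(u)}=1+u\xi(u)$, which gives $I'(\xi(u))=u$. Thus the exponential contributes $\exp(\int_{u-1}^{u}\xi(t)\,dt)$, and Taylor-expanding around $t=u$ yields $\int_{u-1}^{u}\xi(t)\,dt=\xi(u)-\xi'(u)/2+O(\xi''(u))$. Differentiating the defining relation of $\xi$ once and twice produces the estimates $\xi'(u)=1/u+O(1/(u\xi(u)))$ and $\xi''(u)/\xi'(u)=-1/u+O(1/(u\xi(u)))$, from which $\sqrt{\xi'(u-1)/\xi'(u)}=1+1/(2u)+O(1/(u\xi(u)))$. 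The key observation is that the $+1/(2u)$ contribution from this prefactor cancels exactly against the $-\xi'(u)/2=-1/(2u)+O(1/(u\xi(u)))$ contribution from the integral, leaving
\[\rho(u-1)/\rho(u)=e^{\xi(u)}\bigl(1+O(1/(u\xi(u)))\bigr).\]
Dividing by $u$ and applying the identity $e^{\xi(u)}=1+u\xi(u)$ then gives $r(u)=\xi(u)+1/u+O(1/u)=\xi(u)+O(1/u)$.

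The main obstacle is the delicate cancellation in the second step. If one treats the $(1+O(1/u))$ factor in the asymptotic of $\rho$ as a black box and does not track the $\pm 1/(2u)$ corrections to their exact values, the best one can say is $\rho(u-1)/\rho(u)=e^{\xi(u)}(1+O(1/u))$, which upon dividing by $u$ only yields the weaker estimate $r(u)=\xi(u)+O(\log u/u)$. Achieving the stated $O(1/u)$ bound therefore requires both exploiting that the refined formula for $\rho$ is an asymptotic \emph{expansion} (so that the leading $c/u$ correction cancels in the ratio $\rho(u-1)/\rho(u)$ to order $1/u^2$) and verifying that the $\pm 1/(2u)$ contributions from $\sqrt{\xi'(u-1)/\xi'(u)}$ and from the Taylor-expanded integral cancel term by term.
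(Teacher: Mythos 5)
The paper offers no proof of this lemma at all: it is quoted from de la Bret\`eche--Tenenbaum, so your sketch is an attempt to reprove an external input rather than to match an argument in the text. Your overall route is the natural direct one, and most of the calculus is right: $\phi(u)=-u\xi(u)+I(\xi(u))$ does satisfy $\phi'=-\xi$, the estimates $\xi'(u)=1/u+O(1/(u\xi))$ and $\xi''/\xi'=-1/u+O(1/(u\xi))$ are correct, and the cancellation of the $+1/(2u)$ from $\sqrt{\xi'(u-1)/\xi'(u)}$ against the $-\xi'(u)/2$ from $\int_{u-1}^{u}\xi(t)\,dt$ is genuine. This correctly reduces the problem to showing that the relative-error factors in the asymptotic formula for $\rho$ contribute only $1+O(1/(u\xi))$ to the ratio $\rho(u-1)/\rho(u)$, which is exactly the tolerance $e^{-\xi}\asymp 1/(u\xi)$ that the target $r(u)=\xi(u)+O(1/u)$ permits (and, as you note, a bare $1+O(1/u)$ factor only yields $O(\log u/u)$).

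The gap is in the key input you quote. The ``refined expansion'' $\rho(u)=\sqrt{\xi'(u)/2\pi}\,e^{\gamma-u\xi+I(\xi)}(1+c/u+O(1/u^2))$ with a constant $c$ is not a known result, and it is in fact false as stated: a second-order saddle-point computation gives the correction $\tfrac{I^{(4)}(\xi)}{8I''(\xi)^2}-\tfrac{5I'''(\xi)^2}{24I''(\xi)^3}=-\tfrac{1}{12}\tfrac{\xi}{e^{\xi}}\bigl(1+O(1/\xi)\bigr)$, which equals $-\tfrac{1}{12u}+O\bigl(\tfrac{1}{u\xi}\bigr)$; the expansion mixes powers of $1/u$ and $1/\xi\asymp 1/\log u$, so the remainder after the $c/u$ term is only $O(1/(u\log u))$, not $O(1/u^2)$, and the ``cancellation to order $1/u^2$'' you rely on is not available in that form. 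Fortunately you do not need it: from the correct form $E(u)=-\tfrac{1}{12u}+O(1/(u\xi))$ one gets $E(u-1)-E(u)\ll 1/u^2+1/(u\xi)\ll 1/(u\xi)$, which is precisely enough. But then the real content of the proof becomes establishing this two-term (or explicitly differentiable second-term) expansion of $\rho$, i.e.\ a second-order saddle-point analysis of the Laplace inversion of $\hat{\rho}$ (de Bruijn's expansion, whose coefficients are functions of $\xi$, not constants over powers of $u$); as written you treat this as a black box in a form that is incorrect, so the proposal is not yet a proof. Two smaller points: Lemma \ref{lem:rho size} only gives $\rho(u)\asymp$ the main term, so the square-root--prefactor form with a $1+O(1/u)$ error must itself be cited or proved; and the range $1\le u\le C$ should be dispatched separately by continuity and boundedness, since the asymptotic argument says nothing there.
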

\begin{proposition}\label{prop:labte}Fix $\varepsilon>0$. Suppose $x\ge C_{\varepsilon}$. For $x \ge y \ge (\log x)^{1+\varepsilon}$,
	\[ \Lambda(x,y) =x \rho(u) K\left(- \frac{r(u)}{\log y}\right) \left(1+O_{\varepsilon}\left( \frac{1}{(\log x)(\log y)} + \frac{y}{x\log x}\right)\right).\]
\end{proposition}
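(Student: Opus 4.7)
The starting point is Saias's integration-by-parts identity \eqref{eq:intbyparts} established inside the proof of Lemma \ref{lem:lambdaupper}:
\[ \lambda_y(u)=\rho(u)+\int_{0}^{u-1}(-\rho'(u-v))\{y^v\}y^{-v}dv+O(y^{-u}).\]
The strategy is to (i) replace $-\rho'(u-v)$ by $r(u)\rho(u)e^{vr(u)}$ using Lemma \ref{lem:rhoshifted2}, (ii) extend the truncated integral from $[0,u-1]$ to $[0,\infty)$, and (iii) compute the resulting complete integral exactly, recognizing the answer as $\rho(u)(K(-r(u)/\log y)-1)$.

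For step (iii), the substitution $t=y^{v}$ gives
\[\int_{0}^{\infty}e^{vr(u)}\{y^{v}\}y^{-v}dv=\frac{1}{\log y}\int_{1}^{\infty}\{t\}t^{r(u)/\log y-2}dt,\]
and the classical formula $\int_{1}^{\infty}\{t\}t^{-s-1}dt=\frac{1}{s-1}-\frac{\zeta(s)}{s}$ (valid for $\Re s>0$) applied at $s=1-r(u)/\log y$ yields $-\frac{1}{r(u)}-\frac{\zeta(1-r(u)/\log y)}{\log y-r(u)}$. Multiplying by $r(u)\rho(u)$ and using the definition $K(t)=t\zeta(t+1)/(t+1)$ rearranges the right-hand side into $-\rho(u)+\rho(u)K(-r(u)/\log y)$, so combining with the leading $\rho(u)$ from \eqref{eq:intbyparts} produces precisely $\rho(u)K(-r(u)/\log y)$.

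The quantitative work lies in steps (i) and (ii). Throughout we use $r(u)=\xi(u)+O(1/u)\ll\log(u+1)$ (Lemmas \ref{lem:rxirel}, \ref{lem:xilem}), and the hypothesis $y\ge(\log x)^{1+\varepsilon}$, which forces $\log y-r(u)\asymp_{\varepsilon}\log y$. In step (i), Lemma \ref{lem:rhoshifted2} gives a pointwise error $\ll\rho(u)v e^{vr(u)}(1+v\log(u+1))/u$; integrating against $y^{-v}$ yields $\ll_{\varepsilon}\rho(u)\log(u+1)/(u(\log y)^{3})\ll\rho(u)/((\log x)(\log y)^{2})$, since $\log(u+1)\ll\log y$ in our range. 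Multiplied by $x$ and divided by the main term $x\rho(u)K(-r(u)/\log y)\asymp_\varepsilon x\rho(u)$, this contributes $O_{\varepsilon}(1/((\log x)(\log y)))$. The $O(y^{-u})$ tail in \eqref{eq:intbyparts} contributes $O(1)$ absolute, hence $O(1/(x\rho(u)))\ll y/(x\log x)$.

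In step (ii), the tail is bounded by
\[r(u)\rho(u)\int_{u-1}^{\infty}e^{v(r(u)-\log y)}dv\ll\frac{r(u)\rho(u)\cdot e^{(u-1)r(u)}y}{x\log y}\]
(using $y^{-(u-1)}=y/x$). Splitting on $u$: for $u$ bounded, $r(u),e^{(u-1)r(u)}=O(1)$ and $\rho(u)\asymp1$, giving $\ll y\rho(u)/(x\log y)\asymp y\rho(u)/\log x$ since $\log x\asymp\log y$. For $u\ge 2$, analyse the logarithm of the ratio of the tail to the acceptable error $x\rho(u)/((\log x)(\log y))$: this amounts to showing $ur(u)e^{(u-1)r(u)}\ll y^{u-1}$, i.e.\
\[\log u+\log r(u)+(u-1)(r(u)-\log y)\le -c_\varepsilon\log y.\]
Since $\log y\ge(1+\varepsilon)\log\log x\ge(1+\varepsilon)\log u$ for $x\ge C_{\varepsilon}$, we have $\log y-r(u)\ge\varepsilon\log y/(2(1+\varepsilon))$, whence $(u-1)(\log y-r(u))\ge(u-1)c_\varepsilon\log y$, dominating $\log u+\log r(u)=O(\log y)$ for $u\ge 2$ and $x$ large. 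This gives exponential $y$-power savings in the tail. I expect step (ii) to be the main obstacle, because the naive bound $e^{(u-1)r(u)}\asymp(u\log u)^{u-1}$ looks catastrophic, and only the assumption $y\ge(\log x)^{1+\varepsilon}$ secures the necessary $y^{-\delta}$-type decay; one must be careful to handle the boundary case $u\in[1,2]$ separately (where the integral itself is short and $r(u),e^{(u-1)r(u)}=O(1)$).
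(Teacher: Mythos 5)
Your proposal follows essentially the same route as the paper's proof: start from the integration-by-parts identity \eqref{eq:intbyparts}, replace $-\rho'(u-v)$ by $r(u)\rho(u)e^{vr(u)}$ via Lemma \ref{lem:rhoshifted2}, complete the integral to $[0,\infty)$, identify the completed integral through $\zeta(s)=s/(s-1)-s\int_{1}^{\infty}\{t\}t^{-1-s}\,dt$ at $s=1-r(u)/\log y$ (this is exactly the paper's identity \eqref{eq:rhoz}), and bound the tail using $a=\log y-r(u)\gg_{\varepsilon}\log y$, splitting at $u=2$. Two small corrections to your error accounting. First, in step (i) the dominant contribution comes from the $v$ term, not the $v^{2}\log(u+1)$ term: the integral is $\ll_{\varepsilon}\rho(u)/(ua^{2})\asymp\rho(u)/((\log x)(\log y))$, so your stated saving of an extra $\log y/\log(u+1)$ is not there, but this is exactly the admissible size and the conclusion is unaffected. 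Second, your absorption of the $O(y^{-u})$ term via ``$1/(x\rho(u))\ll y/(x\log x)$'' is false in part of the range (e.g.\ $y=(\log x)^{2}$, where $1/(x\rho(u))=x^{-1/2+o(1)}$ while $y/(x\log x)=(\log x)/x$); instead absorb it into the first error term, since $x\rho(u)\ge x^{c_{\varepsilon}}\gg(\log x)(\log y)$ in the range $y\ge(\log x)^{1+\varepsilon}$ by Lemmas \ref{lem:rho size} and \ref{lem:xilem}.
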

Equation \eqref{eq:LaBTT} follows from Proposition \ref{prop:labte} using Lemma \ref{lem:rxirel}. Proposition \ref{prop:labte}, in slightly weaker form, is implicit in \cite[pp.~176--177]{LaBreteche2005}, and the proof given below follows these pages.
\begin{proof}
	For $u=1$ the claim is trivial since $\Lambda(x,x)=\lfloor x\rfloor$ \cite[Eq.~(3.2)]{debruijn1951}, so we assume $u>1$. Recall the integral representation $\zeta(s)=s/(s-1) -s \int_{1}^{\infty}\{t\}dt/t^{1+s}$ for $\Re s >0$ \cite[Eq.~(1.24)]{MV}. We apply it with $s=1-r(u)/\log y$ and perform the change of variable $t=y^v$ to obtain
	\begin{equation}\label{eq:rhoz}
		K(-r(u)/\log y) =1+r(u) \int_{0}^{\infty} e^{r(u)v}\{y^v\}y^{-v} dv.
	\end{equation}
	From \eqref{eq:rhoz} and \eqref{eq:intbyparts} we deduce  \begin{equation}\label{eq:intbyparts2}
		x\rho(u)K(- r(u)/\log y)-\Lambda(x,y)= x\int_{0}^{\infty}(\rho'(u-v)-\rho'(u)e^{r(u)v})\{y^v\}y^{-v}dv+ O(1).
	\end{equation}
	It remains to show that the right-hand side of \eqref{eq:intbyparts2} is \[\ll_{\varepsilon} x\rho(u)\left(\frac{1}{(\log x)(\log y)} +\frac{y}{x \log x}\right).\] 
	It is convenient to set
	\begin{equation}\label{eq:a}
		a :=\log\left(\frac{y}{e^{r(u)}}\right)= (\log y) - r(u) \ge \frac{\varepsilon}{2}\log y,
	\end{equation}
	where the inequality is due to Lemmas \ref{lem:rxirel} and \ref{lem:xilem} and our assumptions on $x$ and $y$.
	By Lemma \ref{lem:rhoshifted2}, the contribution of $0 \le v \le u-1$ to the right-hand side of \eqref{eq:intbyparts2} is
	\begin{align}
		&\ll \frac{x \rho(u)}{u} \int_{0}^{u-1} \left( \frac{e^{r(u)}}{y}\right)^{v} v(1+v\log(u+1))dv\\
		&= \frac{x \rho(u)}{u}\left( -e^{-av} \left( \frac{\log(u+1)}{a}v^2+\frac{2\log(u+1)+a}{a^2}v + \frac{2\log(u+1)+a}{a^3}\right)\right) \Big|^{v=u-1}_{v=0}.
	\end{align}
	Using $e^{(u-1)a}\gg \max\{(u-1)a, (u-1)^2 a^2\}$ and \eqref{eq:a} we find that the last quantity is $\ll_{\varepsilon} x\rho(u)/((\log x)(\log y))$ which is acceptable. For $v > u-1$, $\rho'(u-v)=0$ and that part of the integral (times $x$) is estimated as
	\[\ll x(-\rho'(u)) \int_{u-1}^{\infty}e^{-av}dv =x\rho(u) r(u)\frac{e^{-a(u-1)}}{a}\ll_{\varepsilon} x\rho(u) \log(u+1) \frac{e^{-a(u-1)}}{\log y}.\]
	If $u \ge 2$ this is $\ll_{\varepsilon} x\rho(u) /((\log x)(\log y))$, otherwise this is $\ll x\rho(u)(y/x)/\log x$. Both cases give an acceptable contribution.
\end{proof}
 \subsection*{Acknowledgements}
We are grateful to Sacha Mangerel for asking us about the integer analogue of \cite{gorodetsky}. We thank the referee for useful suggestions and comments that improved the manuscript. This project has received funding from the European Research Council (ERC) under the European Union's Horizon 2020 research and innovation programme (grant agreement No 851318).
\bibliographystyle{abbrv}
\bibliography{references}

\Addresses
\end{document}